\documentclass[11pt]{amsart}
\usepackage[top=1.5in, bottom=1.5in, left=1.4in, right=1.4in]{geometry} 
\geometry{letterpaper}

\usepackage{amssymb}
\usepackage{amsmath}
\usepackage{mathrsfs}
\input xy
\xyoption{all}
\usepackage{mathdots}
\usepackage{mathtools}

\usepackage{url}
\usepackage{enumerate}
\usepackage{csquotes}
\usepackage{array}

\usepackage[all]{xy}
\usepackage{graphicx}  
\usepackage{tikz}
\usetikzlibrary{decorations.pathreplacing}
\usepackage{tkz-graph}
\usetikzlibrary{arrows}
\usetikzlibrary{decorations.markings}

\usepackage{algorithm}
\usepackage[noend]{algpseudocode} 

\algnewcommand{\algorithmicgoto}{\textbf{go to}}%
\algnewcommand{\Goto}[1]{\algorithmicgoto~line~\ref{#1}}%

\usepackage{hyperref}

\newtheorem{thm}{Theorem}[section]

\newtheorem{cor}[thm]{Corollary}

\newtheorem{fact}[thm]{Fact}

\newtheorem{Definition}[thm]{Definition}
\newenvironment{definition}
  {\begin{Definition}\rm}{\end{Definition}}
\newtheorem{Example}[thm]{Example}
\newenvironment{example}
  {\begin{Example} \rm}{\end{Example}}
\newtheorem{Remark}[thm]{Remark}
\newenvironment{remark}
  {\begin{Remark}\rm}{\end{Remark}}
  
\numberwithin{equation}{section}


\usepackage{etoolbox}
\apptocmd{\sloppy}{\hbadness 10000\relax}{}{}

\def\SetFancyGraph {
	\SetVertexMath
	\GraphInit[vstyle=Art]
	\SetUpVertex[MinSize=2pt]
	\SetVertexLabel
	\tikzset{VertexStyle/.style = {shape = circle,shading = ball,ball color = black,inner sep = 1.5pt}}
	\SetUpEdge[color=black]
	\tikzstyle{EdgeStyle}=[auto]
	\tikzstyle{LabelStyle}=[auto=3pt,inner sep=0pt,outer sep=3pt]
	\tikzset{->-/.style={decoration={ markings, mark=at position 0.8 with {\arrow{>}}},postaction={decorate}}}
	\tikzset{->--/.style={decoration={ markings, mark=at position 0.55 with {\arrow{>}}},postaction={decorate}}}
	\tikzstyle{chips}  = [circle, minimum width=15pt, fill=white,draw=black, inner sep=0pt]
}

\title{Parking functions and tree inversions revisited}
\author{Petar Gaydarov}
\email{peter.gaydaro@gmail.com}
\author{Sam Hopkins}
\email{shopkins@mit.edu}
\address{Massachusetts Institute of Technology, Cambridge, MA, 02139, United States}

\begin{document}

\begin{abstract}
Kreweras proved that the reversed sum enumerator for parking functions of length $n$ is equal to the inversion enumerator for labeled trees on $n+1$ vertices. Recently, Perkinson, Yang, and Yu gave a bijective proof of this equality that moreover generalizes to graphical parking functions. Using a depth-first search variant of Dhar's burning algorithm they proved that the reversed sum enumerator for $G$-parking functions equals the $\kappa$-number enumerator for spanning trees of $G$. The $\kappa$-number is a kind of generalized tree inversion number originally defined by Gessel. We extend the work of Perkinson-Yang-Yu to what are referred to as ``generalized parking functions'' in the literature, but which we prefer to call \emph{vector parking functions} because they depend on a choice of vector $\mathbf{x} \in \mathbb{N}^n$. Specifically, we give an expression for the reversed sum enumerator for $\mathbf{x}$-parking functions in terms of inversions in rooted plane trees with respect to certain admissible vertex orders. Along the way we clarify the relationship between graphical and vector parking functions.
\end{abstract}

\maketitle

\section{Introduction} \label{sec:intro}

Set $\mathbb{N} := \{0,1,2,\ldots\}$. Parking functions are certain sequences in $\mathbb{N}^n$ that originally arose in the study of hashing functions~\cite{konheim1966occupancy}. Their name comes from the following description in terms of parking preferences for cars:
\begin{displayquote}
Imagine that $n$ parking spots labeled $0$ through $n-1$ are arranged in order on a linear street. Cars $C_1,\ldots,C_n$ approach the spaces in order. Car $C_i$ prefers spot $\alpha_i$, which means that he will drive until he reaches that spot and park there it if it is unoccupied. If his preferred spot is occupied he will continue driving until he comes to an unoccupied spot and park in this unoccupied spot. If his preferred spot and all the spots after it are occupied, then he cannot park. A parking function of length~$n$ is sequence $(\alpha_1,\ldots,\alpha_n) \in \mathbb{N}^n$ of parking preferences so that all the cars can park.
\end{displayquote}
An equivalent, less whimsical, definition of parking function is the following.
\begin{definition}
A \emph{parking function of length $n$} is a sequence $(\alpha_1,\ldots,\alpha_n) \in \mathbb{N}^{n}$ whose weakly increasing rearrangement $\alpha_{i_1} \leq \alpha_{i_2} \leq \cdots \leq \alpha_{i_n}$ satisfies $\alpha_{i_j} \leq j -1$ for all~$1 \leq j \leq n$. We denote the set of parking functions of length $n$ by $\mathrm{PF}(n)$.
\end{definition}

Parking functions have appeared in various of areas of pure mathematics. We will not attempt any kind of survey here except to say that, beyond their inherent combinatorial interest, parking functions arise in the study of hyperplane arrangements~\cite[Lecture~6]{stanley2007introduction} and the theory of diagonal harmonics~\cite[Chapter~5]{haglund2008qtcatalan}. For more background on parking functions in general, see the recent survey by Yan~\cite{yan2015parking}.

Parking functions are intimately related to labeled trees. For instance, it is well-known that the number of parking functions of length~$n$ is~$(n+1)^{n-1}$~\cite{konheim1966occupancy, riordan1969ballots}. Cayley's formula tells us that this is also the number of labeled trees on $n+1$ vertices. Indeed, there are many explicit bijections between parking functions and labeled trees~\cite{schutzenberger1968enumeration, foata1974mappings}. But more is true. The \emph{sum} of a parking function~$\alpha = (\alpha_1,\ldots,\alpha_n) \in \mathrm{PF}(n)$ is defined to be~$\mathrm{sum}(\alpha) := \sum_{i=1}^{n} \alpha_i$ and the \emph{reversed sum} is~$\mathrm{rsum}(\alpha) := \binom{n}{2} - \mathrm{sum}(\alpha)$. The reversed sum is defined so that the parking functions of maximal sum have reversed sum~$0$. The reversed sum is also called the ``area of a major sequence'' in~\cite{beissinger1997note} and ``total displacement'' in~\cite{knuth1998linear} but we follow the terminology used in~\cite{kung2003goncarov}. Let us call the expression $\sum_{\alpha\in \mathrm{PF}(n)} q^{\mathrm{rsum}(\alpha)}$ the \emph{reversed sum enumerator} for parking functions of length~$n$. Kreweras~\cite{kreweras1980famille} established that the reversed sum enumerator for parking functions of length~$n$ is equal to the \emph{inversion enumerator} for labeled trees on~$n+1$ vertices. That is, Kreweras proved that
\begin{align} 
\sum_{\alpha \in \mathrm{PF}(n)} q^{\mathrm{rsum}(\alpha)} = \sum_{T} q^{\mathrm{inv}(T)} \label{eqn:classical}
\end{align}
where the sum over all labeled trees $T$ on $n+1$ vertices and $\mathrm{inv}(T)$ is the \emph{inversion number} of  $T$, a certain natural statistic that generalizes the inversion number of a permutation. Specifically, for $T$ a labeled tree with vertices $0,1,2,\ldots,n$ (which we will always consider rooted at~$0$) an \emph{inversion of $T$} is a pair $(i,j)$ with $1 \leq i < j \leq n$ such that the unique path from $i$ to $0$ passes through $j$. The \emph{inversion number}~$\mathrm{inv}(T)$ is the number of inversions of $T$. 

\begin{example}
Set $n := 2$. The three labeled trees on $3$ vertices and their inversion numbers are the following:
\begin{center}
\begin{tikzpicture}[scale=0.5]
	\SetFancyGraph
	\Vertex[LabelOut,Lpos=90, Ldist=.05cm,x=0,y=1]{0}
	\Vertex[LabelOut,Lpos=270, Ldist=.05cm,x=-1,y=0]{1}
	\Vertex[LabelOut,Lpos=270, Ldist=.05cm,x=1,y=0]{2}
	\Edges[style={thick}](0,1)
	\Edges[style={thick}](0,2)
	\node at (0,-1.75) {$\mathrm{inv}(T) = 0$};
\end{tikzpicture} \qquad \begin{tikzpicture}[scale=0.5]
	\SetFancyGraph
	\Vertex[LabelOut,Lpos=0, Ldist=.05cm,x=0,y=1]{0}
	\Vertex[LabelOut,Lpos=0, Ldist=.05cm,x=0,y=0]{1}
	\Vertex[LabelOut,Lpos=0, Ldist=.05cm,x=0,y=-1]{2}
	\Edges[style={thick}](0,1)
	\Edges[style={thick}](1,2)
	\node at (0,-1.75) {$\mathrm{inv}(T) = 0$};
\end{tikzpicture} \qquad \begin{tikzpicture}[scale=0.5]
	\SetFancyGraph
	\Vertex[LabelOut,Lpos=0, Ldist=.05cm,x=0,y=1]{0}
	\Vertex[LabelOut,Lpos=0, Ldist=.05cm,x=0,y=0]{2}
	\Vertex[LabelOut,Lpos=0, Ldist=.05cm,x=0,y=-1]{1}
	\Edges[style={thick}](0,1)
	\Edges[style={thick}](1,2)
	\node at (0,-1.75) {$\mathrm{inv}(T) = 1$};
\end{tikzpicture}
\end{center}
On the other hand, the three parking functions of length $2$ are $(0,0)$, $(1,0)$, and $(0,1)$, which have reversed sums of $1$, $0$, and $0$ respectively. So~(\ref{eqn:classical}) holds in this case.
\end{example}

For more on the inversion enumerator for labeled trees see~\cite{mallows1968inversion} and~\cite{gessel1995enumeration}. Kreweras's proof of~(\ref{eqn:classical}) was via a recursively defined bijection between parking functions and trees that preserves the requisite statistics. Stanley stated the problem of finding a nonrecursive bijective proof of~(\ref{eqn:classical}) as~\cite[Lecture 6, Exercise 4]{stanley2007introduction}. Various authors~\cite{shin2008new, guedes2010parking} devised such a nonrecursive bijection. Recently, Perkinson, Yang, and Yu~\cite{perkinson2013gparking} found an elegant bijective proof of~(\ref{eqn:classical}) that moreover generalizes naturally to the setting of \emph{graphical parking functions}. To keep them distinct from their generalizations, let us refer to the regular parking functions defined above as \emph{classical parking functions}.

Graphical parking functions were first given that name by Postnikov and Shaprio~\cite{postnikov2004trees} but have been studied for longer in the context of the abelian sandpile model where they are essentially the same as what are called \emph{superstable configurations} (see~\cite[Definition~4.3]{holroyd2008chip} or~\cite[\S2.3]{perkinson2013primer}). The superstable configurations are dual to the \emph{recurrent configurations} (see~\cite[Definition 2.11]{holroyd2008chip} or~\cite[Definition~2.10]{perkinson2013primer}; these recurrent configurations are also sometimes called \emph{critical configurations} as in~\cite{biggs1999chip}). The set of recurrent configurations is a more natural object to study from the perspective of statistical mechanics, the field out of which the abelian sandpile model originally arose. Indeed, the abelian sandpile model was first described in the pioneering work of Bak, Tang, and Wiesenfeld~\cite{bak1987self}, who were interested in modeling self-organized criticality in physical systems. Dhar~\cite{dhar1990self} subsequently put the model into a more general graphical framework and developed much of the basic theory. Let us also note that graphical parking functions appear under the name of \emph{$v_0$-reduced divisors} in Riemann-Roch theory for graphs~\cite{baker2007riemann}. As such they are related to a deep analogy between finite graphs and Riemann surfaces. For more background on sandpile theory, consult the short survey~\cite{levine2010sandpile} or the upcoming book~\cite{corry2016divisors}.

Graphical parking functions depend on a choice of graph $G$ (as well as a choice of root vertex that we will ignore in this introduction). We denote the set of $G$-parking functions by~$\mathrm{PF}(G)$. For~$\alpha \in \mathrm{PF}(G)$ we have similar notions of \emph{sum}~$\mathrm{sum}(\alpha)$ (which in this context is often called \emph{degree}) and \emph{reversed sum}~$\mathrm{rsum}(\alpha)$.  $G$-parking functions serve as coset representatives for the cokernel of the reduced Laplacian~$\widetilde{\Delta}(G)$ of the graph $G$. Therefore, by Kirchhoff's Matrix-Tree Theorem, the number of $G$-parking functions is equal to the number of spanning trees of $G$. It is thus natural to ask for an extension of Kreweras's result~(\ref{eqn:classical}) to this graphical setting which says that the reversed sum enumerator for $G$-parking functions is some kind of ``inversion enumerator'' for spanning trees of~$G$.

Perkinson-Yang-Yu~\cite{perkinson2013gparking} provided such an extension. Specifically, they gave a bijective proof that
\begin{equation} \label{eqn:graphical}
 \sum_{\alpha \in \mathrm{PF}(G)} q^{\mathrm{rsum}(\alpha)} = \sum_{T} q^{\kappa(G,T)}
\end{equation}
for any simple graph $G$, where the sum is over all spanning trees~$T$ of~$G$ and~$\kappa(G,T)$ is a kind of generalized inversion number originally defined by Gessel~\cite{gessel1995enumerative}. Importantly, when $G=K_{n+1}$ is the complete graph, equation~(\ref{eqn:graphical}) recovers~(\ref{eqn:classical}). The proof of~(\ref{eqn:graphical}) in~\cite{perkinson2013gparking} is based on a depth-first search (DFS) variant of Dhar's burning algorithm~\cite{dhar1990self, dhar2006theoretical}. The idea of using depth-first search to study tree inversions goes back at least to the fundamental work of Gessel and Wang~\cite{gessel1979depth}. Depth-first search appeared again in the paper~\cite{gessel1995enumerative} in which Gessel defined the $\kappa$-number statistic. And depth-first search, as well as the related \emph{neighbor-first search}, were also used later by Gessel and Sagan~\cite{gessel1996tutte} to relate certain spanning tree activities (including generalized inversion numbers) to the Tutte polynomial of a graph and to classical parking functions. The main novelty of~\cite{perkinson2013gparking} was the combination of depth-first search with Dhar's burning algorithm. (But note that similar ideas appeared in~\cite[\S3]{chebikin2005family} and especially in~\cite[\S4]{kostic2008multiparking}, which however used neighbor-first search rather than depth-first search per se.) Dhar's burning algorithm is one of the most important tools in the study of the abelian sandpile model: it is a linear time algorithm for deciding whether a given sequence is a $G$-parking function that in some sense generalizes the ``parking procedure'' mentioned at the beginning of this section. Another variant of the burning algorithm, due to Cori and Le~Borgne~\cite{cori2003sand}, proves that the reversed sum enumerator for~$G$-parking functions is also the generating function for spanning trees of $G$ by \emph{external activity} (a different tree statistic that depends on a total edge order rather than a total vertex order).

\begin{remark}
There is a natural embedding of the symmetric group $\mathfrak{S}_n$ on $n$ letters into the set of labeled trees on $n+1$ vertices given by $\sigma \mapsto T_{\sigma}$ where $T_{\sigma}$ is
\begin{center}
\begin{tikzpicture}[scale=1]
	\SetFancyGraph
	\Vertex[LabelOut,Lpos=90, Ldist=.1cm,x=0,y=0]{0}
	\Vertex[LabelOut,Lpos=90, Ldist=.1cm,x=1,y=0,L={\sigma(1)}]{1}
	\Vertex[LabelOut,Lpos=90, Ldist=.1cm,x=2,y=0,L={\sigma(2)}]{2}
	\Vertex[LabelOut,Lpos=90, Ldist=.1cm,x=3,y=0,L={\sigma(n)}]{3}	
	\Edges[style={thick}](0,1)
	\Edges[style={thick}](1,2)
	\Edges[style={thick,dashed}](2,3)
\end{tikzpicture}
\end{center}
Observe that $\mathrm{inv}(T_{\sigma}) = \mathrm{inv}(\sigma)$, the usual permutation inversion number. Pushing $T_{\sigma}$ through the bijection of Perkinson-Yang-Yu~\cite{perkinson2013gparking} yields not the most obvious way of representing $\sigma$ in $\mathrm{PF}(n)$ (``one-line notation'') but rather a version of the \emph{code} of $\sigma$ (see~\cite[\S1.3]{stanley2012ec1}). Indeed, the identity~(\ref{eqn:classical}) should be seen as a generalization of the very classical result
\[ [n]_q! = \sum_{\sigma \in \mathfrak{S}_n} q^{\mathrm{inv}(\sigma)}\]
where $[n]_q! := [n]_q \cdot [n-1]_q \cdots [2]_q \cdot [1]_q$ and $[k]_q := \frac{1-q^{k}}{1-q} = 1 + q + q^2 + \cdots + q^{k-1}$ are the standard \emph{$q$-factorials} and \emph{$q$-numbers}. This offers some impression of the subtlety of the DFS-burning bijection.
\end{remark}

There is another generalization of classical parking function that goes under the unfortunately vague name of \emph{generalized parking function}~\cite{stanley2002polytope}~\cite{yan2000enumeration}. Whereas graphical parking functions depend on a choice of graph $G$, generalized parking functions depend on a choice of vector $\mathbf{x} = (x_1,\ldots,x_n) \in \mathbb{N}^n$ of nonnegative integers. As such we will call them \emph{$\mathbf{x}$-parking functions} or \emph{vector parking functions}.

\begin{definition} \label{def:xpf}
Let $\mathbf{x} = (x_1,\ldots,x_n) \in \mathbb{N}^n$ be some vector of nonnegative integers. An~\emph{$\mathbf{x}$-parking function} is a sequence $(\alpha_1,\ldots,\alpha_n) \in \mathbb{N}^{n}$ whose weakly increasing rearrangement $\alpha_{i_1} \leq \alpha_{i_2} \leq \cdots \leq \alpha_{i_n}$ satisfies $\alpha_{i_j} \leq (\sum_{k=1}^{j} x_k) -1$ for all~$1 \leq j \leq n$. We denote the set of $\mathbf{x}$-parking functions by $\mathrm{PF}(\mathbf{x})$.
\end{definition}

Observe that $\mathrm{PF}(n) = \mathrm{PF}(\mathbf{x})$ for $\mathbf{x} = (\overbrace{1,1,\ldots,1}^{n})$. Also note that \emph{rational parking functions}, which because of their connections to algebra and geometry (especially the representation theory of rational Cherednik algebras~\cite{berest2003finite}) have been much studied as of late~\cite{gorsky2014affine, armstrong2014rational}, are a special case of vector parking functions. But vector parking functions are less structured objects than rational parking functions: for instance, it is easy to count rational parking functions (see~\cite[Corollary~4]{armstrong2014rational}) but apparently not so easy to count arbitrary vector parking functions. The best general formula for enumerating $\mathbf{x}$-parking functions is due to Pitman and Stanley~\cite[Theorems 1 and~11]{stanley2002polytope} and appeared in the paper in which they introduced $\mathbf{x}$-parking functions. In order to state their result we need a little more notation: for all $n \in \mathbb{N}$, set 
\[\Gamma(n) := \{(\gamma_1,\ldots,\gamma_{n}) \in \mathbb{N}^n\colon \sum_{i=1}^{j}\gamma_i \geq j \textrm{ for all $1\leq i \leq n-1$ and } \sum_{i=1}^{n}\gamma_i = n\}.\]
It is well-known that $\#\Gamma(n) = C_n$ where $C_n := \frac{1}{n+1}\binom{2n}{n}$ is the $n$th Catalan number; see e.g.~\cite[\S2~Exercise~86]{stanley2015catalan}.

\begin{thm}[Pitman-Stanley] \label{thm:pitmanstanley} For any $\mathbf{x} = (x_1,\ldots,x_{n}) \in \mathbb{N}^n$, the number of $\mathbf{x}$-parking functions is the following polynomial $P_n(x_1,\ldots,x_{n})$ in the variables $x_1,\ldots,x_{n}$:
\begin{align*}
P_n(x_1,\ldots,x_{n}) &:= \sum_{(\alpha_1,\ldots,\alpha_n) \in \mathrm{PF}(n)} x_{\alpha_1+1} x_{\alpha_2+1} \cdots x_{\alpha_n+1} \\
&= \sum_{(\gamma_1,\ldots,\gamma_n) \in \Gamma(n)} \frac{n!}{\gamma_1!\gamma_2!\cdots\gamma_n!} x_1^{\gamma_1}x_2^{\gamma_2}\cdots x_{n}^{\gamma_n}.
\end{align*}
\end{thm}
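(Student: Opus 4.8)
The statement is really two separate equalities: first, that the number of $\mathbf{x}$-parking functions equals the refined sum $\sum_{\alpha \in \mathrm{PF}(n)} x_{\alpha_1+1}\cdots x_{\alpha_n+1}$; and second, that this refined sum agrees with the multinomial sum over $\Gamma(n)$. The plan is to prove each in turn: the first by a fiber-counting (block-refinement) argument, the second by a purely combinatorial regrouping.

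For the first equality, set $X_0 := 0$ and $X_j := x_1 + \cdots + x_j$, and partition $\{0,1,\ldots,X_n-1\}$ into consecutive blocks $B_j := \{X_{j-1},\ldots,X_j-1\}$ of size $x_j$. Every entry of an $\mathbf{x}$-parking function lies in $\{0,\ldots,X_n-1\}$ by the top constraint $\beta_{(n)} \le X_n - 1$, hence in a unique block. I would send $\beta = (\beta_1,\ldots,\beta_n) \in \mathrm{PF}(\mathbf{x})$ to the sequence $\alpha$ with $\alpha_i := j-1$ when $\beta_i \in B_j$. The key lemma is that $\beta \in \mathrm{PF}(\mathbf{x})$ if and only if $\alpha \in \mathrm{PF}(n)$: because the blocks are intervals, sorting $\beta$ also sorts the block indices, and since $X$ is weakly increasing one checks that the condition $\beta_{(j)} \le X_j - 1$ holds for \emph{every} choice of representatives within the blocks exactly when the sorted block indices $a_1 \le \cdots \le a_n$ satisfy $a_j \le j$, which is precisely the classical parking condition on $\alpha$ (as $a_j = \alpha_{(j)} + 1$). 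Granting this, the fiber over a fixed $\alpha \in \mathrm{PF}(n)$ consists of all $\beta$ with each $\beta_i$ ranging freely over $B_{\alpha_i+1}$, of which there are $\prod_i |B_{\alpha_i+1}| = \prod_i x_{\alpha_i+1}$; summing over $\alpha$ gives the first equality. Blocks with $x_j = 0$ cause no trouble, being empty while their classical parking functions contribute a vanishing monomial, so the identity holds for all $\mathbf{x} \in \mathbb{N}^n$.

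For the second equality, I would group the terms of $\sum_{\alpha \in \mathrm{PF}(n)} \prod_i x_{\alpha_i+1}$ by \emph{content}: a sequence in which the value $i-1$ occurs $\gamma_i$ times contributes the monomial $x_1^{\gamma_1}\cdots x_n^{\gamma_n}$, and there are $n!/(\gamma_1!\cdots\gamma_n!)$ arrangements of a given content. Since the parking property depends only on the sorted sequence, all arrangements of a fixed content are simultaneously parking functions or not, so it remains only to characterize which contents $\gamma$ (with $\sum_i \gamma_i = n$) yield parking functions. Writing $G_j := \gamma_1 + \cdots + \gamma_j$, the sorted parking condition is binding at the first occurrence of each used value, giving $G_j \ge j$ whenever $\gamma_{j+1} \ge 1$; I would then verify, using that $G$ is weakly increasing with $G_n = n$, that this is equivalent to $G_j \ge j$ for all $1 \le j \le n-1$, i.e. to $\gamma \in \Gamma(n)$. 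Inserting the count $n!/(\gamma_1!\cdots\gamma_n!)$ for each such content produces the multinomial expression.

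The two counting steps are routine; the genuine content lies in the two equivalences—translating the $\mathbf{x}$-parking condition into the block-index condition $a_j \le j$, and translating the content parking condition into membership in $\Gamma(n)$. I expect the first to be the main obstacle, since it requires care to see that the constraint either holds for all representatives within the blocks or for none, which is exactly what makes each fiber a full product of block sizes.
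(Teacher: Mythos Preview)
Your argument is correct. The block-refinement for the first equality works cleanly: once the entries of $\beta$ are sorted, the $j$th smallest lies in block $B_{a_j}$ with $a_j=\alpha_{(j)}+1$, and since $\min B_{a_j}=X_{a_j-1}$ and $\max B_{a_j}=X_{a_j}-1$, the inequality $\beta_{(j)}\le X_j-1$ holds for \emph{every} representative precisely when $a_j\le j$, and fails for \emph{every} representative when $a_j>j$ (because then $\beta_{(j)}\ge X_{a_j-1}\ge X_j$). Your content argument for the second equality is also fine; the equivalence ``$G_j\ge j$ whenever $\gamma_{j+1}\ge 1$'' $\Leftrightarrow$ ``$G_j\ge j$ for all $j\le n-1$'' follows by taking the \emph{largest} violating $j$ and noting that $G_{j+1}\ge j+1$ forces $\gamma_{j+1}\ge 2$.

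However, this is not how the paper treats Theorem~\ref{thm:pitmanstanley}. The paper does not prove it directly at all: it is quoted as a result of Pitman--Stanley, and the paper's own route to it is indirect --- establish the $q$-refined identity of Theorem~\ref{thm:main} via the vector DFS-burning bijection, specialize $q:=1$ to obtain Corollary~\ref{cor:main}, and then invoke the bijection $T\mapsto(\mathrm{outdeg}_T(0),\ldots,\mathrm{outdeg}_T(n-1))$ between $\mathrm{RPT}(n+1)$ and $\Gamma(n)$ (Remark~\ref{rem:corequivalent}) to match the multinomial form. The paper itself remarks that the original Pitman--Stanley argument is ``much more straightforward'' than this DFS-burning detour. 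Your proof is essentially that straightforward argument: a direct fiber count plus a content regrouping, with no trees, no burning, and no $q$. What the paper's approach buys is that the same machinery simultaneously yields the full $q$-enumerator in Theorem~\ref{thm:main}, of which the count is just the $q=1$ shadow; what your approach buys is a short, self-contained proof of the enumeration that requires nothing beyond the definition of $\mathrm{PF}(\mathbf{x})$.
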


Vector parking functions have been studied in many subsequent papers, especially by Yan and her collaborators. In~\cite{yan2000enumeration} Yan gave nice enumerative formulas for several special classes of $\mathbf{x}$-parking functions. In~\cite{yan2001generalized}, building on earlier work~\cite{yan1997generalized}, Yan related the reversed sum enumerator for a special class of $\mathbf{x}$-parking functions to tree inversions. In~\cite{kung2003goncarov} and~\cite{kung2003expected} Kung and Yan studied the connection between $\mathbf{x}$-parking functions and Gon\v{c}arov polynomials. Via this connection they offered linear recursions, a shift formula, and a perturbation formula for the number of $\mathbf{x}$-parking functions, as well as a formula for the expected sum of a random $\mathbf{x}$-parking function. They also investigated the reversed sum enumerator for $\mathrm{PF}(\mathbf{x})$. For~$\mathbf{x} = (x_1,\ldots,x_n) \in \mathbb{N}^n$, define the \emph{sum} of a vector parking function $\alpha = (\alpha_1,\ldots,\alpha_n) \in \mathrm{PF}(\mathbf{x})$ to be $\mathrm{sum}(\alpha) := \sum_{i=1}^{n} \alpha_i$ and the \emph{reversed sum} to be~$\mathrm{rsum}(\alpha) :=  \left( \sum_{i=1}^{n} (n+1-i) x_{i} \right) - n - \mathrm{sum}(\alpha)$. Again, reversed sum is defined so that $\mathbf{x}$-parking functions of maximal sum have reversed sum~$0$. Kung and Yan~\cite[Theorem~6.2]{kung2003goncarov} established the following formula for the reversed sum enumerator for~$\mathrm{PF}(\mathbf{x})$ (and they also explained how it follows from the arguments of Pitman-Stanley~\cite{stanley2002polytope}).

\begin{thm}[Pitman-Stanley and Kung-Yan] \label{thm:kungyan}
For any $\mathbf{x} = (x_1,\ldots,x_n) \in \mathbb{N}^n$, the reversed sum enumerator for $\mathbf{x}$-parking functions is 
\begin{align*}
\sum_{\alpha \in \mathrm{PF}(\mathbf{x})} q^{\mathrm{rsum}(\alpha)} &= q^{ \left( \sum_{i=1}^{n} (n+1-i) x_{i} \right) - n} \, P_n([x_1]_{1/q},q^{-x_1}[x_2]_{1/q},\ldots,q^{-x_1-x_2\cdots-x_{n-1}}[x_n]_{1/q}) \\
&= \sum_{(\gamma_1,\ldots,\gamma_n) \in \Gamma(n)}  \frac{n!}{\gamma_1!\gamma_2!\cdots\gamma_n!} q^{\sum_{i=1}^{n-1}(\gamma_{1}+\gamma_{2} + \cdots+\gamma_i - i)x_{i+1}}\left( \prod_{i=1}^{n}[x_i]_q^{\gamma_i} \right).
\end{align*}
\end{thm}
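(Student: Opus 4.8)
The plan is to prove both equalities by refining the combinatorial interpretation of the Pitman--Stanley polynomial $P_n$ from Theorem~\ref{thm:pitmanstanley} so that it keeps track of the sum statistic. First I would pull out the constant factor: since $\mathrm{rsum}(\alpha) = (\sum_{i=1}^n (n+1-i)x_i) - n - \mathrm{sum}(\alpha)$, we have $\sum_{\alpha \in \mathrm{PF}(\mathbf{x})} q^{\mathrm{rsum}(\alpha)} = q^{(\sum_i (n+1-i)x_i)-n}\sum_{\alpha \in \mathrm{PF}(\mathbf{x})} q^{-\mathrm{sum}(\alpha)}$, so the whole problem reduces to evaluating the ``$q^{-\mathrm{sum}}$ enumerator'' $\sum_\alpha q^{-\mathrm{sum}(\alpha)} = \sum_\alpha \prod_{i=1}^n q^{-\alpha_i}$.

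To compute this I would set $X_j := \sum_{k=1}^j x_k$ (with $X_0 = 0$) and partition $\mathbb{N}$ into the consecutive blocks $[X_{j-1}, X_j - 1]$ of sizes $x_1, \ldots, x_n$. Given $\alpha \in \mathrm{PF}(\mathbf{x})$, record for each $i$ the block $b_i$ containing $\alpha_i$ together with the offset $p_i := \alpha_i - X_{b_i - 1} \in \{0, \ldots, x_{b_i}-1\}$ inside that block, and set $\beta_i := b_i - 1$. The key claim is that $\alpha \mapsto (\beta, (p_i)_i)$ is a bijection from $\mathrm{PF}(\mathbf{x})$ onto the set of pairs where $\beta = (\beta_1,\ldots,\beta_n) \in \mathrm{PF}(n)$ is a classical parking function and $p_i \in \{0,\ldots,x_{\beta_i+1}-1\}$ for each $i$; this is exactly the bijection underlying the Pitman--Stanley count, enhanced to remember the offsets. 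The forward direction follows because sorting $\alpha$ sorts the block indices (blocks are increasing intervals), so the $\mathbf{x}$-parking condition that the weakly increasing rearrangement $\alpha_{(1)} \le \cdots \le \alpha_{(n)}$ satisfy $\alpha_{(j)} \le X_j - 1$ forces the sorted $\beta$ to obey the classical condition $\beta_{(j)} \le j-1$; the inverse map sends $(\beta, p)$ to $\alpha_i := X_{\beta_i} + p_i$, and the same block bookkeeping shows $\alpha \in \mathrm{PF}(\mathbf{x})$. Under this bijection $\mathrm{sum}(\alpha) = \sum_i X_{\beta_i} + \sum_i p_i$, so summing $\prod_i q^{-\alpha_i}$ over each fiber factors as $\prod_i (q^{-X_{\beta_i}} \sum_{p=0}^{x_{\beta_i+1}-1} q^{-p}) = \prod_i q^{-X_{\beta_i}}[x_{\beta_i+1}]_{1/q}$. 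Recognizing $q^{-X_{\beta_i}}[x_{\beta_i+1}]_{1/q}$ as the value $y_{\beta_i+1}$ of the $j$-th argument $y_j := q^{-X_{j-1}}[x_j]_{1/q}$ and comparing with the first form of $P_n$ in Theorem~\ref{thm:pitmanstanley} yields the first equality.

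For the second equality I would substitute the monomial form $P_n(y_1,\ldots,y_n) = \sum_{\gamma \in \Gamma(n)} \frac{n!}{\gamma_1!\cdots\gamma_n!}\prod_j y_j^{\gamma_j}$ with $y_j = q^{-X_{j-1}}[x_j]_{1/q}$, then reconcile the resulting $q$-powers. Writing $[x_j]_{1/q} = q^{-(x_j-1)}[x_j]_q$ converts each $q$-number, and collecting exponents the total power of $q$ attached to a fixed $\gamma$ becomes $(\sum_i (n+1-i)x_i) - n - \sum_j \gamma_j X_{j-1} - \sum_j (x_j-1)\gamma_j$. The main work is then a bookkeeping computation: writing $G_i := \gamma_1 + \cdots + \gamma_i$ and using the defining relations of $\Gamma(n)$ (crucially $G_n = \sum_j \gamma_j = n$), one rewrites $\sum_j \gamma_j X_{j-1} = \sum_{k=1}^{n-1}(n - G_k)x_k$ and finds that the coefficient of each $x_m$ collapses to $G_{m-1} - (m-1) = \gamma_1 + \cdots + \gamma_{m-1} - (m-1)$. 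Since this vanishes for $m = 1$, the exponent equals $\sum_{i=1}^{n-1}(\gamma_1 + \cdots + \gamma_i - i)x_{i+1}$, matching the claimed formula. I expect this final exponent-collapsing step to be the only delicate part; the conceptual content sits entirely in the refined bijection of the second paragraph, and once the sum statistic is correctly transported the remainder is algebra driven by the constraint $\sum_j \gamma_j = n$.
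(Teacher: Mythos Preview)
Your proof is correct. Note, however, that the paper does not actually supply its own proof of Theorem~\ref{thm:kungyan}: the result is quoted from Kung and Yan~\cite[Theorem~6.2]{kung2003goncarov}, who in turn observed that it follows from the Pitman--Stanley arguments in~\cite{stanley2002polytope}. So there is no in-paper argument to compare against.

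That said, your approach is precisely the natural one hinted at by the attribution: you refine the bijection behind Theorem~\ref{thm:pitmanstanley} (classical parking function $\beta$ plus within-block offsets $p_i$) so as to track $\mathrm{sum}(\alpha)=\sum_i X_{\beta_i}+\sum_i p_i$, and then the first displayed equality drops out by summing the geometric series in each $p_i$. The passage to the second displayed form is, as you say, pure bookkeeping: using $[x_j]_{1/q}=q^{-(x_j-1)}[x_j]_q$ and $\sum_j\gamma_j=n$, the exponent collapses via
\[
\sum_{i}(n+1-i)x_i-\sum_j X_j\gamma_j
=\sum_{i}(n+1-i)x_i-\sum_{k}(n-G_{k-1})x_k
=\sum_{i\ge 2}\bigl(G_{i-1}-(i-1)\bigr)x_i,
\]
exactly as claimed. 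One small point worth making explicit in the bijection step: when some $x_j=0$ the block $[X_{j-1},X_j-1]$ is empty, so no $\beta$ with $\beta_i+1=j$ contributes a valid pair $(\beta,p)$; this is consistent with the corresponding monomials vanishing in $P_n$, and the argument goes through unchanged.
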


Since the best general enumerative formula for~$\#\mathrm{PF}(\mathbf{x})$ (Theorem~\ref{thm:pitmanstanley}) involves a sum of Catalan many terms we cannot hope for a better formula for the reversed sum enumerator for~$\mathrm{PF}(\mathbf{x})$ than one which also sums Catalan many terms. Thus in some sense Theorem~\ref{thm:kungyan} is totally satisfactory. But on the other hand, in light of the connections between tree inversions and the reversed sum enumerators for classical and graphical parking functions, we might hope that we could also express the reversed sum enumerator for vector parking functions in terms of tree inversions. In particular we could ask for an expression for the reversed sum enumerator for $\mathbf{x}$-parking functions that evidently recaptures~(\ref{eqn:classical}) when $\mathbf{x} := (1,1,\ldots,1)$ and Theorem~\ref{thm:pitmanstanley} when $q:=1$. This is the problem we take up in the current paper.

We adapt the technique of Perkinson-Yang-Yu to give an expression for the reversed sum enumerator for $\mathbf{x}$-parking functions in terms of tree inversions. In the special case where~$\mathbf{x} = (a,b,b,\ldots,b) \in \mathbb{N}^n$ we recover a result of Yan~\cite{yan2001generalized} that gives the reversed sum enumerator for $\mathrm{PF}(\mathbf{x})$ in terms of inversions in spanning trees of a certain multigraph~$K^{a,b}_{n+1}$ (see Remark~\ref{rem:yan}). In this case the~$\mathbf{x}$-parking functions are exactly the graphical parking functions of~$K^{a,b}_{n+1}$. In fact, the first step of our proof is to extend in a straightforward way the DFS-burning algorithm of~\cite{perkinson2013gparking} to multigraphs (see Theorem~\ref{thm:gpfrsum}).  But our result is more general than that of~\cite{yan2001generalized} because it applies to arbitrary~$\mathbf{x}$. In general vector parking functions are not graphical parking functions of any multigraph and vice-versa. We completely characterize the (small) overlap between graphical and vector parking functions in Theorem~\ref{thm:gpfandxpf}.

Our main result expresses the reversed sum enumerator for~$\mathrm{PF}(\mathbf{x})$ as a sum over rooted plane trees. In order to state this result, let us briefly review notation for trees. A \emph{rooted tree} is a tree together with a choice of distinguished \emph{root} vertex. A rooted tree~$T$ comes with a partial order~$\leq_{T}$ on its vertex set~$V(T)$ whereby~$i \leq_T j$ if and only if the unique path in~$T$ from~$j$ to the root passes through~$i$.  If~$i \leq_T j$ and~$i \neq j$ then we say  $i$ is an \emph{ancestor} of~$j$ and $j$ is a \emph{descendant} of $i$. We use other familial language in the obvious way: so we say that~$j$ is a \emph{child} of~$i$, and $i$ is the \emph{parent} of~$j$, if~$j$ is a descendant of $i$ and no descendant of~$i$ is an ancestor of~$j$. For a non-root vertex~$i$, we use~$\mathrm{par}_T(j)$ to denote the (unique) parent of~$j$. We say that $i$ and $j$ are \emph{siblings} if~$\mathrm{par}_T(i) = \mathrm{par}_T(j)$. A \emph{rooted plane tree} is an rooted tree drawn in the plane with parents drawn above children and with the children of each vertex linearly ordered from left to right. One fundamental combinatorial interpretation of the Catalan number $C_n$ is as the number of rooted plane trees on~$n+1$ vertices (see~\cite[Theorem 1.5.1(iii)]{stanley2015catalan}). By giving the tree a depth-first labeling (or \emph{preorder} as in~\cite[p.~10]{stanley2015catalan}) of its vertices, we have the following equivalent definition of rooted plane tree.

\begin{definition} \label{def:planetree}
For $n \geq 1$, a \emph{rooted plane tree} on $n+1$ vertices is a rooted tree~$T$ with vertex set~$V(T) := \{0,1,\ldots,n\}$ and root~$0$ satisfying the following conditions:
\begin{itemize}
\item $i \leq_T j$ implies $i \leq j$ for all $1 \leq i,j \leq n$;
\item $i \leq_T k$ implies $i \leq_T j$ for all $1 \leq i < j < k \leq n$. 
\end{itemize}
We denote the set of rooted plane trees on~$n+1$ vertices by $\mathrm{RPT}(n+1)$. 
\end{definition}

To state our main result we need also to discuss vertex orders for rooted trees. For~$T$ a rooted tree and $\prec$ any total order on its non-root vertices, an \emph{inversion of $T$ with respect to~$\prec$} is a pair~$(i,j)$ of non-root vertices with $i \leq_T j$ but $j \prec i$.  For~$T \in \mathrm{RPT}(n+1)$, an \emph{admissible vertex order} of $T$ is a total order~$\prec$ on~$V(T) - \{0\}$ for which~$1\leq i < j \leq n$ with~$i$ and~$j$ siblings implies~$j \prec i$. We denote the set of admissible vertex orders of $T$ by~$\mathrm{AVO}(T)$.  Observe that the number of pairs $(T,\prec)$ with~$T \in \mathrm{RPT}(n+1)$ and~$\prec \in \mathrm{AVO}(T)$ is~$(n+1)^{n-1}$, the number of labeled trees on~$n+1$ vertices. Indeed, given any labeled tree $T'$ with vertices~$0,1,\ldots,n$ there is a canonical way to associate such a pair $(T,\prec)$: perform a depth-first search of $T'$ starting at~$0$ and always preferring to visit the vertex with the greatest label when presented with a choice; then, if $\sigma \in \mathfrak{S}_n$ is the unique permutation recording the order~$0,\sigma(1),\sigma(2),\ldots,\sigma(n)$ in which we visited the vertices of $T'$ in the course of this search, relabel~$T'$ by $i \mapsto \sigma^{-1}(i)$ to obtain a rooted plane tree~$T$; lastly, define~$\prec$ by~$i \prec j$ if and only if $\sigma(i) < \sigma(j)$. See Figure~\ref{fig:canonicalpair} for an example of this bijective correspondence. Note crucially that under this correspondence inversions of~$T'$ correspond to inversions of~$T$ with respect to~$\prec$.

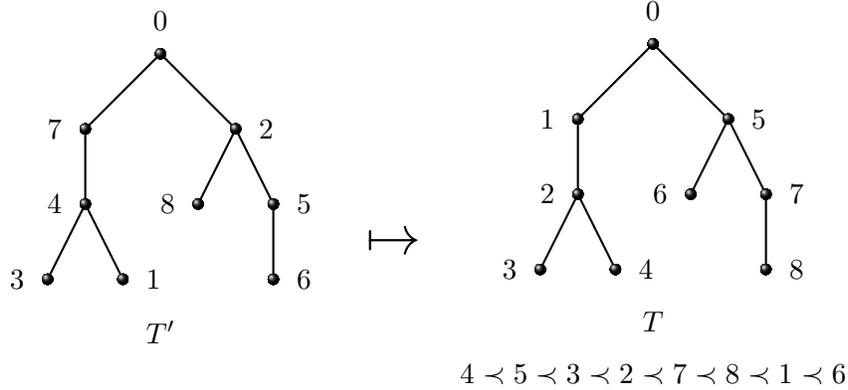
\begin{figure}
\begin{tikzpicture}[scale=1]
	\SetFancyGraph
	\Vertex[LabelOut,Lpos=90, Ldist=.1cm,x=0,y=0]{0}
	\Vertex[LabelOut,Lpos=180, Ldist=.1cm,x=-1,y=-1]{7}
	\Vertex[LabelOut,Lpos=0, Ldist=.1cm,x=1,y=-1]{2}
	\Vertex[LabelOut,Lpos=180, Ldist=.1cm,x=-1,y=-2]{4}
	\Vertex[LabelOut,Lpos=180, Ldist=.1cm,x=0.5,y=-2]{8}
	\Vertex[LabelOut,Lpos=0, Ldist=.1cm,x=1.5,y=-2]{5}
	\Vertex[LabelOut,Lpos=180, Ldist=.1cm,x=-1.5,y=-3]{3}
	\Vertex[LabelOut,Lpos=0, Ldist=.1cm,x=-0.5,y=-3]{1}
	\Vertex[LabelOut,Lpos=0, Ldist=.1cm,x=1.5,y=-3]{6}
	\Edges[style={thick}](0,7)
	\Edges[style={thick}](0,2)
	\Edges[style={thick}](7,4)
	\Edges[style={thick}](2,8)
	\Edges[style={thick}](2,5)
	\Edges[style={thick}](4,3)
	\Edges[style={thick}](4,1)
	\Edges[style={thick}](5,6)
	\node at (0,-3.7) {$T'$};
	\node at (0,-4.4) {};
\end{tikzpicture} \; \; {\parbox{0.3in}{\huge\vspace{-1.5in}$\mapsto$}} \; \begin{tikzpicture}[scale=1]
	\SetFancyGraph
	\Vertex[LabelOut,Lpos=90, Ldist=.1cm,x=0,y=0]{0}
	\Vertex[LabelOut,Lpos=180, Ldist=.1cm,x=-1,y=-1,L={1}]{7}
	\Vertex[LabelOut,Lpos=0, Ldist=.1cm,x=1,y=-1,L={5}]{2}
	\Vertex[LabelOut,Lpos=180, Ldist=.1cm,x=-1,y=-2,L={2}]{4}
	\Vertex[LabelOut,Lpos=180, Ldist=.1cm,x=0.5,y=-2,L={6}]{8}
	\Vertex[LabelOut,Lpos=0, Ldist=.1cm,x=1.5,y=-2,L={7}]{5}
	\Vertex[LabelOut,Lpos=180, Ldist=.1cm,x=-1.5,y=-3,L={3}]{3}
	\Vertex[LabelOut,Lpos=0, Ldist=.1cm,x=-0.5,y=-3,L={4}]{1}
	\Vertex[LabelOut,Lpos=0, Ldist=.1cm,x=1.5,y=-3,L={8}]{6}
	\Edges[style={thick}](0,7)
	\Edges[style={thick}](0,2)
	\Edges[style={thick}](7,4)
	\Edges[style={thick}](2,8)
	\Edges[style={thick}](2,5)
	\Edges[style={thick}](4,3)
	\Edges[style={thick}](4,1)
	\Edges[style={thick}](5,6)
	\node at (0,-3.7) {$T$};
	\node at (0,-4.4) {$4 \prec 5 \prec 3 \prec 2 \prec 7 \prec 8 \prec 1 \prec 6$};
\end{tikzpicture}
\caption{An example of the bijective correspondence between labeled trees and rooted plane trees together with an admissible vertex order. On the left we have an arbitrary labeled tree $T'$ on $9$ vertices. On the right we have the corresponding~$T \in \mathrm{RPT}(9)$ and~$\prec \in \mathrm{AVO}(T)$.} \label{fig:canonicalpair}
\end{figure}

The main result of this paper is a bijective proof of the following theorem.

\begin{thm}\label{thm:main}
For $\mathbf{x} = (x_1,\ldots,x_n) \in \mathbb{N}^n$ we have
\[ \sum_{\alpha \in \mathrm{PF}(\mathbf{x})} q^{\mathrm{rsum}(\alpha)} = \sum_{T \in \mathrm{RPT}(n+1)} \left(\sum_{\prec \in \mathrm{AVO}(T)}q^{\sum_{\substack{i \leq_T j,\\ j \prec i}} \; x_{\mathrm{par}_T(i)+1}}\right) \left(\prod_{i=1}^{n} [x_i]^{\mathrm{outdeg}_T(i-1)}_q\right)\]
where $\mathrm{outdeg}_T(i) := \#\{j \in V(T) \colon i = \mathrm{par}_T(j)\}$ is the number of children of $i$ in $T$.
\end{thm}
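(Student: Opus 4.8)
The plan is to prove the identity by constructing a weight-preserving bijection, and the first move is to unfold the right-hand side. Writing $[x_i]_q = \sum_{c=0}^{x_i-1}q^c$ and regrouping the product as $\prod_{i=1}^{n}[x_i]_q^{\mathrm{outdeg}_T(i-1)} = \prod_{j=1}^{n}[x_{\mathrm{par}_T(j)+1}]_q$ (one factor per non-root vertex), the right-hand side becomes
\[ \sum_{T \in \mathrm{RPT}(n+1)} \; \sum_{\prec \in \mathrm{AVO}(T)} \; \sum_{c} \; q^{\left(\sum_{i \leq_T j,\; j \prec i} x_{\mathrm{par}_T(i)+1}\right) + \sum_{j=1}^{n} c_j}, \]
where the inner sum ranges over all functions $c$ assigning to each non-root vertex $j$ a residue $c_j \in \{0,1,\ldots,x_{\mathrm{par}_T(j)+1}-1\}$. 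It therefore suffices to build a bijection between $\mathrm{PF}(\mathbf{x})$ and the set of triples $(T,\prec,c)$ that carries $\mathrm{rsum}(\alpha)$ to the exponent above.

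The organizing principle is a slack identity. Put $u_m := \sum_{k=1}^{m}x_k$. If $v_1,v_2,\ldots,v_n$ is any enumeration of the non-root vertices with $\alpha_{v_m}\leq u_m-1$ for every $m$ -- for instance the order in which they are visited by any legal run of a burning process -- then, since $\{v_1,\ldots,v_n\}=\{1,\ldots,n\}$ and $\sum_{m=1}^{n}(u_m-1)$ equals the maximal possible sum $\sum_{i=1}^{n}(n+1-i)x_i-n$, one gets $\mathrm{rsum}(\alpha)=\sum_{m=1}^{n}\bigl(u_m-1-\alpha_{v_m}\bigr)$, a sum of nonnegative slacks. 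Everything thus reduces to producing one canonical such enumeration by depth-first search, together with a rule splitting each slack $u_m-1-\alpha_{v_m}$ into an inversion contribution and a residue matching the display above.

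To produce the enumeration I would adapt the DFS-burning algorithm of Perkinson--Yang--Yu. I would first record its extension to multigraphs (Theorem~\ref{thm:gpfrsum}): for $\mathbf{x}=(a,b,\ldots,b)$ the $\mathbf{x}$-parking functions are precisely the parking functions of the multigraph $K^{a,b}_{n+1}$ (with $a$ parallel edges from the root to every other vertex and $b$ parallel edges between each pair of non-root vertices), and DFS-burning on $K^{a,b}_{n+1}$ reads off $T$, $\prec$, and -- from the choice of which parallel edge transmits the fire -- the residues $c$. The device that makes this generalize to arbitrary $\mathbf{x}$ is the decomposition $u_m=\sum_{t=0}^{m-1}x_{t+1}$: the firepower reaching an unburned vertex after $m-1$ non-root vertices have burned arrives in bundles, the $t$-th vertex to burn supplying a bundle of $x_{t+1}$ units. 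I would run the identical depth-first burning for general $\mathbf{x}$ with these bundles in place of genuine parallel edges; since the only thresholds governing readiness are the $u_m$, which are defined for every $\mathbf{x}$, the process is well defined even though general $\mathbf{x}$-parking functions are not graphical.

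The main obstacle is verifying that this burning is a weight-preserving bijection. The local picture is clean: when the vertex $v_m$ (the $m$-th to burn) catches fire it does so at its $(\alpha_{v_m}+1)$-th firepower unit, and if $p$ is the rank at which $v_m$ first became ready ($u_{p-1}\leq\alpha_{v_m}<u_p$), that unit lies in the bundle supplied by the $(p-1)$-th burned vertex, which becomes the parent of $v_m$, so that its parent-weight equals $x_p$; the units of that bundle lying above the igniting one number $u_p-1-\alpha_{v_m}\in\{0,\ldots,x_p-1\}$ and furnish the residue $c_{v_m}$, while the bundles of the vertices burned during the waiting period of $v_m$ (ranks $p$ through $m-1$) account for the remaining slack $u_m-u_p=\sum_{t=p}^{m-1}x_{t+1}$. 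What must be proved -- and this is the delicate step -- is that the depth-first rule arranges the burned vertices into a rooted plane tree $T$ whose left-to-right structure forces $j\prec i$ for siblings $i<j$ (admissibility), and under which the total skipped firepower $\sum_m(u_m-u_p)$ equals the tree-inversion weight $\sum_{i\leq_T j,\,j\prec i}x_{\mathrm{par}_T(i)+1}$, ideally through an explicit matching of each skipped bundle from the $t$-th burned vertex with an inversion whose ancestor is a child of that vertex. Granting this accounting, the slack identity yields $\mathrm{rsum}(\alpha)=\sum_{i\leq_T j,\,j\prec i}x_{\mathrm{par}_T(i)+1}+\sum_j c_j$ term by term, and reversing the burning supplies the inverse map, hence the bijection; specializing $\mathbf{x}=(1,\ldots,1)$ collapses all residues and recovers~(\ref{eqn:classical}), while $q=1$ recovers Theorem~\ref{thm:pitmanstanley}.
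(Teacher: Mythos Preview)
Your proposal is essentially correct and follows the same overall strategy as the paper: a vector version of the Perkinson--Yang--Yu DFS-burning algorithm producing triples $(T,\prec,c)$ (the paper writes $\ell$ for your $c$) and a verification that $\mathrm{rsum}$ decomposes as inversion weight plus residues.

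The one structural difference worth noting is how the ``delicate step'' you flag is handled. You propose to verify directly, via slack accounting, that the skipped bundles match the inversion weight $\sum_{i\leq_T j,\,j\prec i} x_{\mathrm{par}_T(i)+1}$. The paper instead \emph{factors} this step through an auxiliary multigraph $G_{\mathbf{x}}$ with $\omega_{G_{\mathbf{x}}}(\{i,j\})=x_{\min(i,j)+1}$: it observes that a run of the vector DFS-burning algorithm on $\alpha$ literally simulates a run of the already-analyzed multigraph DFS-burning algorithm on the permuted input $\beta=\sigma^{-1}\cdot\alpha$ with respect to $G_{\mathbf{x}}$ and the emergent order $\prec$. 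Thus the weight identity, bijectivity, and the explicit inverse all come for free from the multigraph theorem (Theorem~\ref{thm:multidfs}), with no separate inversion accounting required. Your ``bundles from the $t$-th burned vertex'' are exactly the parallel edges of $G_{\mathbf{x}}$ emanating from vertex $t$ after relabeling, so the two arguments are the same at bottom; the paper's packaging just avoids redoing the combinatorics.
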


Theorem~\ref{thm:main} does relate the reversed sum enumerator for $\mathbf{x}$-parking functions to tree inversions, albeit in a slightly complicated way. (The dependence of the expression on the parent of $i$ for an inversion~$(i,j)$ is a phenomenon going back to Gessel's original definition~\cite{gessel1995enumerative} of his $\kappa$-number.) It is worth comparing Theorems~\ref{thm:kungyan} and~\ref{thm:main}. They both express the reversed sum enumerator for $\mathbf{x}$-parking functions as a sum of Catalan many ``terms.'' The ``main parts'' (products of $q$-numbers) of these terms are the same in the two formulas; however, the ``coefficients'' in front of the main parts are different. Thus Theorems~\ref{thm:kungyan} and~\ref{thm:main} together imply some surprising equalities between expressions involving $q$ and the variables~$x_1,\ldots,x_n$. Indeed, we can see these surprising equalities emerge already in the smallest nontrivial case of~$n:=2$, as we detail in the following example.

\begin{example} \label{ex:n2}
Let $\mathbf{x} := (x_1,x_2) \in \mathbb{N}^2$. First let us compute the reversed sum enumerator for $\mathbf{x}$-parking functions according to the formula in Theorem~\ref{thm:kungyan}. In this case~$\Gamma(2) = \{(2,0),(1,1)\}$. For $\gamma = (2,0)$ we have $\sum_{i=1}^{2}(\gamma_{1} + \cdots+\gamma_{i}- i)x_{i+1} = x_2$ and for $\gamma=(1,1)$ we have $\sum_{i=1}^{2}(\gamma_{1} + \cdots+\gamma_{i}- i)x_{i+1} = 0$. Thus from Theorem~\ref{thm:kungyan} we conclude that
\[ \sum_{\mathcal{\alpha}\in \mathrm{PF}(\mathbf{x})}q^{\mathrm{rsum}(\alpha)} = q^{x_2}[x_1]_q^2 + 2[x_1]_q[x_2]_q  \]
Now let us compute the reversed sum enumerator for $\mathbf{x}$-parking functions according to Theorem~\ref{thm:main}. So we fill out the following table:
\begin{center}
\newcolumntype{C}{ >{\centering\arraybackslash} m{3cm} }
\begin{tabular}{C | C | C | C }
$T \in \mathrm{RPT}(3)$ & $\prec \in \mathrm{AVO}(T)$ & Inversions of $T$ w.r.t.~$\prec$ & $\sum_{\substack{i \leq_T j, \\ j \prec i}} x_{\mathrm{par}_T(i)+1}$ \\
\hline
\begin{tikzpicture}[scale=0.6]
	\SetFancyGraph
	\Vertex[LabelOut,Lpos=90, Ldist=.1cm,x=0,y=1]{0}
	\Vertex[LabelOut,Lpos=270, Ldist=.1cm,x=-1,y=0.25]{1}
	\Vertex[LabelOut,Lpos=270, Ldist=.1cm,x=1,y=0.25]{2}
	\Edges[style={thick}](0,1)
	\Edges[style={thick}](0,2)
\end{tikzpicture} & $2 \prec 1$ & $\varnothing$ & $0$ \\
\begin{tikzpicture}[scale=0.6]
	\SetFancyGraph
	\Vertex[LabelOut,Lpos=0, Ldist=.1cm,x=0,y=1]{0}
	\Vertex[LabelOut,Lpos=0, Ldist=.1cm,x=0,y=0]{1}
	\Vertex[LabelOut,Lpos=0, Ldist=.1cm,x=0,y=-1]{2}
	\Edges[style={thick}](0,1)
	\Edges[style={thick}](1,2)
\end{tikzpicture}  & $1 \prec 2$ & $\varnothing$ & $0$ \\
\begin{tikzpicture}[scale=0.6]
	\SetFancyGraph
	\Vertex[LabelOut,Lpos=0, Ldist=.1cm,x=0,y=1]{0}
	\Vertex[LabelOut,Lpos=0, Ldist=.1cm,x=0,y=0]{1}
	\Vertex[LabelOut,Lpos=0, Ldist=.1cm,x=0,y=-1]{2}
	\Edges[style={thick}](0,1)
	\Edges[style={thick}](1,2)
\end{tikzpicture} & $2 \prec 1$ & $\{(1,2)\}$ & $x_1$ \\
\end{tabular}
\end{center}
Thus from Theorem~\ref{thm:main} we conclude that
\[ \sum_{\mathcal{\alpha}\in \mathrm{PF}(\mathbf{x})}q^{\mathrm{rsum}(\alpha)} = [x_1]_q^2 + (1+q^{x_1})[x_1]_q[x_2]_q.\]
In fact we have $q^{x_2}[x_1]_q^2 + 2[x_1]_q[x_2]_q = [x_1]_q^2 + (1+q^{x_1})[x_1]_q[x_2]_q$ for all $\mathbf{x} = (x_1,x_2) \in \mathbb{N}^2$, so Theorems~\ref{thm:kungyan} and~\ref{thm:main} agree in this case.
\end{example} 

\begin{example} \label{ex:n3}
For $n:=3$, Theorems~\ref{thm:kungyan} and~\ref{thm:main} together imply that
{ \begin{gather*}
q^{2x_2+x_3}[x_1]_q^3 + 3q^{x_2+x_3}[x_1]_q^2[x_2]_q + 3q^{x_2}[x_1]_q^2[x_3]_q + 3q^{x_3}[x_1]_q[x_2]_q^2 + 6[x_1]_q[x_2]_q[x_3]_q = \\
[x_1]_q^3 + (1+2q^{x_1})[x_1]_q^2[x_2]_q + (2+q^{x_1})[x_1]_q^2[x_3]_q + (1+q^{x_1}+q^{2x_1})[x_1]_q[x_2]_q^2 \\
+ (1+q^{x_1}+q^{x_2}+q^{2x_1}+q^{x_1+x_2}+q^{2x_1 + x_2})[x_1]_q[x_2]_q[x_3]_q
\end{gather*}}
for all $\mathbf{x} = (x_1,x_2,x_3) \in \mathbb{N}^3$, an equality which is true but not obvious.
\end{example}

By substituting $q:=1$ in Theorem~\ref{thm:main} we obtain the following corollary.

\begin{cor} \label{cor:main}
For all~$\mathbf{x} = (x_1,\ldots,x_{n}) \in \mathbb{N}^n$, the number of $\mathbf{x}$-parking functions is
\[ n! \sum_{T \in \mathrm{RPT}(n+1)} \left( \prod_{i=1}^{n} \frac{x_{i}^{\mathrm{outdeg}_T(i-1)}}{\mathrm{outdeg}_T(i-1)!} \right).\]
\end{cor}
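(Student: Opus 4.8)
The plan is to simply substitute $q := 1$ into Theorem~\ref{thm:main} and then perform an elementary count of admissible vertex orders. Setting $q = 1$ triggers three collapses. On the left, $\sum_{\alpha \in \mathrm{PF}(\mathbf{x})} q^{\mathrm{rsum}(\alpha)}$ becomes $\#\mathrm{PF}(\mathbf{x})$. On the right, each $q$-number specializes to $[x_i]_1 = x_i$, so the product of $q$-numbers becomes $\prod_{i=1}^{n} x_i^{\mathrm{outdeg}_T(i-1)}$, while the inner sum $\sum_{\prec \in \mathrm{AVO}(T)} q^{(\cdots)}$ collapses to $\#\mathrm{AVO}(T)$. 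Thus at $q = 1$ the identity of Theorem~\ref{thm:main} reads
\[ \#\mathrm{PF}(\mathbf{x}) = \sum_{T \in \mathrm{RPT}(n+1)} \#\mathrm{AVO}(T) \prod_{i=1}^{n} x_i^{\mathrm{outdeg}_T(i-1)}. \]
It then remains only to show that $\#\mathrm{AVO}(T) = n! \big/ \prod_{i=1}^{n} \mathrm{outdeg}_T(i-1)!$ for each $T \in \mathrm{RPT}(n+1)$; substituting this in and factoring out $n!$ yields exactly the claimed formula.

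To count $\#\mathrm{AVO}(T)$, the key observation is that the admissibility condition constrains $\prec$ only \emph{within} sibling groups: it requires that whenever $i < j$ are siblings we have $j \prec i$, so the restriction of $\prec$ to any set of siblings is forced to be the reverse of the natural label order and is thereby completely determined. Hence an admissible vertex order is precisely a total order on $\{1,\ldots,n\}$ that respects the prescribed relative order within each block of the partition of $\{1,\ldots,n\}$ into sibling groups, and the number of such orders is the number of interleavings of those blocks, namely the multinomial coefficient $n!\big/(b_1!\cdots b_k!)$ where $b_1,\ldots,b_k$ are the block sizes. Since the block consisting of the children of a vertex $p$ has size $\mathrm{outdeg}_T(p)$, and since every parent lies in $\{0,1,\ldots,n-1\}$ (parents have strictly smaller labels than their children in a rooted plane tree, and vertex $n$ can have no children), the block sizes are exactly $\mathrm{outdeg}_T(0),\mathrm{outdeg}_T(1),\ldots,\mathrm{outdeg}_T(n-1)$, with sum $n$. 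This gives $\#\mathrm{AVO}(T) = n!\big/\prod_{i=1}^{n}\mathrm{outdeg}_T(i-1)!$.

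I expect no serious obstacle here: the entire content is the $q = 1$ specialization together with the standard fact that counting total orders which fix the internal order of each block of a set partition produces a multinomial coefficient. The one point requiring a moment's care is confirming that the index set $\{0,1,\ldots,n-1\}$ appearing in $\prod_{i=1}^{n}\mathrm{outdeg}_T(i-1)!$ correctly ranges over all sibling groups—including the harmless factors $0! = 1$ contributed by childless vertices—which follows directly from the defining inequalities of $\mathrm{RPT}(n+1)$ in Definition~\ref{def:planetree}.
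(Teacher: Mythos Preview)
Your proposal is correct and matches the paper's own proof essentially verbatim: the paper also obtains the corollary by setting $q:=1$ in Theorem~\ref{thm:main} and then verifying $\#\mathrm{AVO}(T)=n!\big/\prod_{i=1}^{n}\mathrm{outdeg}_T(i-1)!$ via the same sibling-block count (phrased there as the independence of the constraints ``children of each $i$ appear in decreasing label order under~$\prec$''). Your observation that vertex $n$ has no children in a rooted plane tree is exactly the point the paper makes to justify the index range.
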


Corollary~\ref{cor:main} and Theorem~\ref{thm:pitmanstanley} are easily seen to be equivalent, as we explain in Remark~\ref{rem:corequivalent} below. Moreover, the proof of Theorem~\ref{thm:pitmanstanley} appearing in~\cite{stanley2002polytope} is much more straightforward than the DFS-burning algorithm we use to establish Corollary~\ref{cor:main}. We stress that the expression for the reversed sum enumerator in Theorem~\ref{thm:main} is our main result. In particular we believe, as suggested by Examples~\ref{ex:n2} and~\ref{ex:n3}, that Theorems~\ref{thm:kungyan} and~\ref{thm:main} are not equivalent in any obvious way and thus Theorem~\ref{thm:main} is a genuinely new expression for the reversed sum enumerator, one which incorporates tree inversions. A feature of Theorem~\ref{thm:main} which Theorem~\ref{thm:kungyan} lacks is that, thanks to the correspondence detailed above between labeled trees $T'$ and pairs $(T,\prec)$ of rooted plane trees with admissible vertex orders, Theorem~\ref{thm:main} recovers the foundational result~(\ref{eqn:classical}) of Kreweras when $\mathbf{x} := (1,1,\ldots,1)$.

The moral of this paper is that while $G$-parking functions and $\mathbf{x}$-parking functions are two largely orthogonal generalizations of classical parking functions, techniques from the former can fruitfully be applied to the latter, especially when it comes to the relationship between trees and parking functions.

\medskip

\noindent {\bf Acknowledgements}: This research was carried out at MIT as part of the RSI summer mathematics research program for high-school students. The second author was the mentor of the first author. We thank David Perkinson, Alex Postnikov, and Richard Stanley for some helpful comments. We also thank an anonymous referee for very many helpful comments, especially with regards to the exposition of the paper and with pointers to the (vast) parking function literature. The second author was partially supported by NSF grant~\#1122374.

\section{The overlap between graphical and vector parking functions}

In this section we review graphical parking functions with the aim of understanding their overlap with vector parking functions. Beyond Definition~\ref{def:gpf}, none of this section is necessary for the proof of our main result (Theorem~\ref{thm:main}). But Theorem~\ref{thm:gpfandxpf} below does put our work in context because it shows that graphical and vector parking functions are genuinely different objects.

Throughout this paper we will work with (finite) \emph{multigraphs}, i.e.,~finite, undirected graphs for which multiple edges, but not loops, are allowed. Formally, we view a multigraph $G$ as consisting of its vertex set~$V(G)$ together with its \emph{edge-weight function}~$\omega_{G}\colon \binom{V(G)}{2} \to \mathbb{N}$. However, we will also often think about~$G$ as consisting of its vertex set $V(G)$ together with its multiset~$E(G)$ of edges: so $E(G)$ contains~$\omega_G(e)$ many copies of $e$ for each~$e \in \binom{V(G)}{2}$.  By~$\#E(G) := \sum_{e \in \binom{V(G)}{2}} \omega_G(e)$ we will mean the number of edges of $G$ counted with multiplicity. Any multigraph $G$ under consideration will come with a distinguished \emph{root} vertex. All constructions that follow will implicitly depend on this choice of root but we will suppress the dependence in our notation. We use~$\widetilde{V}(G)$ to denote the set of non-root vertices of $G$. For the rest of this section, $G$ is a fixed multigraph with~$V(G) := \{0,1,\ldots,n\}$ (where $n\geq 1$) and root $0$.

\begin{definition} \label{def:gpf}
A \emph{$G$-parking function} is a formal sum $\alpha = \sum_{i \in \widetilde{V}(G)} \alpha_i [i] \in \mathbb{N}\widetilde{V}(G)$ of non-root vertices such that for any $\varnothing \neq U \subseteq \widetilde{V}(G)$ there is~$i \in U$ with~\mbox{$\alpha_i \leq \mathrm{deg}^G_U(i) - 1$} where~$\mathrm{deg}^G_U(i) := \sum_{j \in V(G) - U} \omega_G(\{i,j\})$. We identify the formal sum $\alpha = \sum_{i \in \widetilde{V}(G)} \alpha_i [i]$ with the sequence $\alpha = (\alpha_1,\ldots,\alpha_n) \in \mathbb{N}^n$ when $\widetilde{V}(G) = \{1,\ldots,n\}$. We denote the set of $G$-parking functions by~$\mathrm{PF}(G)$. The \emph{sum} of~$\alpha \in \mathrm{PF}(G)$ is~$\mathrm{sum}(\alpha) := \sum_{i=1}^{n} \alpha_i$ and the \emph{reversed sum} is~$\mathrm{rsum}(\alpha) := \#E(G) - n - \mathrm{sum}(\alpha)$. 
\end{definition}

We say that $G$ is \emph{connected} if for each~$\varnothing \neq U \subseteq \widetilde{V}(G)$ there is $i \in U$ with $\mathrm{deg}^G_U(i) \geq 1$. Note that $\mathrm{PF}(G) \neq \varnothing$ if and only if~$G$ is connected. Also note that $\mathrm{PF}(G) = \mathrm{PF}(n)$ when~$G = K_{n+1}$ is the complete graph, i.e.,~when $\omega_G(e) = 1$ for all $e \in\binom{V(G)}{2}$. So indeed graphical parking functions are a generalization of classical parking functions. Let us now recall some other basic facts about graphical parking functions. The first fact, which we also mentioned in Section~\ref{sec:intro}, says that the number of~$G$-parking functions is the number of spanning trees of $G$ (weighted by edge multiplicities). A \emph{tree}~$T$ is a connected multigraph with~$\#E(T) = \#V(T)-1$. We will always consider a tree to be rooted at~$0 \in V(T)$. Therefore every tree~$T$ comes with a partial order~$\leq_T$ on~$V(T)$ as described in Section~\ref{sec:intro}. A \emph{spanning subgraph~$H$ of $G$} is a multigraph $H$ with~$V(H) = V(G)$ and~$E(H) \subseteq E(G)$. A \emph{spanning tree $T$ of~$G$} is a spanning subgraph of~$G$ which is a tree. We use~$\mathrm{SPT}(G)$ to denote the set of spanning trees of $G$.

\begin{fact} \label{fact:matrixtree}
We have
\[ \#\mathrm{PF}(G) = \sum_{T \in \mathrm{SPT}(G)} \; \prod_{e \in E(T)} \omega_G(e). \]
Thus via the Matrix-Tree Theorem (see~\cite[\S5]{moon1969counting}) we conclude that~$\#\mathrm{PF}(G)$ is also equal to the determinant of the reduced Laplacian~$\widetilde{\Delta}(G)$ of~$G$.
\end{fact}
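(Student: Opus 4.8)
The plan is to deduce the Fact from the structure of the \emph{sandpile group} $K(G) := \mathbb{Z}\widetilde{V}(G)/\operatorname{im}\widetilde{\Delta}(G)$, the cokernel of the reduced Laplacian. Granting for the moment that $\mathrm{PF}(G)$ is a complete set of coset representatives for $K(G)$, the rest is immediate: since $G$ is connected the matrix $\widetilde{\Delta}(G)$ is nonsingular (indeed it has positive determinant, there being at least one spanning tree), so $K(G)$ is a finite abelian group of order $\det\widetilde{\Delta}(G)$, whence $\#\mathrm{PF}(G) = \#K(G) = \det\widetilde{\Delta}(G)$; and the weighted Matrix-Tree Theorem \cite[\S5]{moon1969counting} evaluates this determinant as $\sum_{T\in\mathrm{SPT}(G)}\prod_{e\in E(T)}\omega_G(e)$. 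Both displayed equalities of the Fact follow. So the entire content to be proved is that each class of $K(G)$ contains exactly one $G$-parking function.

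I would establish this via chip-firing. Reading $\mathbb{Z}\widetilde{V}(G)$ as chip configurations, subtracting $\widetilde{\Delta}(G)\mathbf{1}_U$ from $\alpha$ amounts to \emph{firing the set} $U\subseteq\widetilde{V}(G)$: each $\alpha_i$ with $i\in U$ drops by $\mathrm{deg}^G_U(i)$ while the displaced chips pass to $V(G)\setminus U$, those reaching the root $0$ being lost. Under this dictionary Definition~\ref{def:gpf} says exactly that $\alpha\in\mathrm{PF}(G)$ if and only if $\alpha\ge 0$ and no nonempty $U$ is \emph{fireable} (i.e.\ has $\alpha_i\ge\mathrm{deg}^G_U(i)$ for every $i\in U$); these are the \emph{superstable} configurations. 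The claim that each coset of $K(G)$ contains a unique such $\alpha$ is standard (see \cite[\S4]{holroyd2008chip} or \cite[\S2.3]{perkinson2013primer}), and I would either quote it or reprove it in two halves.

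For \textbf{uniqueness}, suppose $\beta=\alpha+\widetilde{\Delta}(G)\sigma$ with $\alpha,\beta$ both superstable and $\sigma\ne 0$; swapping the roles of $\alpha$ and $\beta$ if necessary, we may take $\max_j\sigma_j\ge 1$. Let $U=\{i:\sigma_i=\max_j\sigma_j\}$. Rewriting $(\widetilde{\Delta}(G)\sigma)_i = \sum_{j\in\widetilde{V}(G)\setminus\{i\}}\omega_G(\{i,j\})(\sigma_i-\sigma_j) + \omega_G(\{i,0\})\sigma_i$ for $i\in U$, the summands with $j\in U$ vanish, each summand with $j\notin U$ is at least $\omega_G(\{i,j\})$, and the root term is at least $\omega_G(\{i,0\})$, so $(\widetilde{\Delta}(G)\sigma)_i\ge\mathrm{deg}^G_U(i)$ and hence $\beta_i\ge\alpha_i+\mathrm{deg}^G_U(i)\ge\mathrm{deg}^G_U(i)$. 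Thus $U$ is fireable for $\beta$, contradicting its superstability. For \textbf{existence}, I would start from any representative of a class and repeatedly fire nonempty fireable sets, which preserves both the class and nonnegativity, halting necessarily at a superstable configuration.

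I expect the main obstacle to be the termination of this cluster-firing: a single firing of $U$ loses only $\sum_{j\in U}\omega_G(\{j,0\})$ chips to the root, which can be $0$, so the total chip count need not strictly decrease and a naive monovariant fails. This is exactly the abelian termination property of chip-firing on a connected graph with a global sink, and in the write-up I would settle it by invoking that property (equivalently, via Dhar's burning algorithm \cite{dhar1990self}, which certifies superstability in linear time) or by exploiting that the symmetric $\widetilde{\Delta}(G)$ is a nonsingular $M$-matrix, so that $\widetilde{\Delta}(G)^{-1}$ is entrywise nonnegative and the accumulated firing vector is bounded. Everything else — the chip-firing dictionary, the identity $\#K(G)=\det\widetilde{\Delta}(G)$, and the appeal to Matrix-Tree — is routine.
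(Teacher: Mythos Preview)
Your argument is correct, but it proceeds along a different route than the paper's own proof. The paper simply observes that the first equality follows from Theorem~\ref{thm:gpfrsum} upon substituting $q:=1$: the left-hand side becomes $\#\mathrm{PF}(G)$ and each factor $[\omega_G(e)]_q$ becomes $\omega_G(e)$, yielding the weighted spanning-tree count directly via the DFS-burning bijection $\varphi^\prec$. The determinant statement is then a \emph{consequence} of Matrix--Tree, not an input to it.

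Your approach reverses the logic: you establish $\#\mathrm{PF}(G)=\det\widetilde{\Delta}(G)$ first, algebraically, by showing that $G$-parking functions are coset representatives for the sandpile group, and only then invoke Matrix--Tree to reach the spanning-tree sum. This is the ``standard'' argument the paper alludes to when it cites \cite{postnikov2004trees}. What it buys is independence from the later bijective machinery---your proof is self-contained modulo chip-firing termination. What the paper's route buys is that the Fact falls out for free once the $q$-refinement (which is the paper's real goal) is in hand, and the bijection witnesses the equality combinatorially rather than through a group-order count.
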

\begin{proof}
This is very well-known; see for instance~\cite[Theorem 2.1]{postnikov2004trees}. At any rate, it will follow from Theorem~\ref{thm:gpfrsum} below by substituting~$q:=1$.
\end{proof}

The next fact concerns maximal $G$-parking functions. The monoid $\mathbb{N}^n$ has a natural partial order whereby for $\alpha = (\alpha_1,\ldots,\alpha_n), \alpha' = (\alpha'_1,\ldots,\alpha'_n) \in \mathbb{N}^n$ we write $\alpha \leq \alpha'$ if and only if~$\alpha_i \leq \alpha'_i$ for all $1 \leq i \leq n$. A \emph{maximal} $G$-parking function is a $G$-parking function that is maximal among $G$-parking functions with respect to this partial order. We use~$\mathrm{MPF}(G)$ for the set of maximal $G$-parking functions.

\begin{fact} \label{fact:max}
For $\alpha \in \mathbb{N}^n$, we have $\alpha \in \mathrm{PF}(G)$ if and only if $\alpha \leq \alpha'$ for some $\alpha' \in \mathrm{MPF}(G)$.
\end{fact}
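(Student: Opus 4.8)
The plan is to prove the two directions separately, the whole content residing in two elementary structural observations: that $\mathrm{PF}(G)$ is closed downward under the partial order $\leq$ on $\mathbb{N}^n$, and that $\mathrm{PF}(G)$ is finite.

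For the backward implication I would show directly that $\mathrm{PF}(G)$ is an order ideal of $(\mathbb{N}^n,\leq)$: if $\alpha' \in \mathrm{PF}(G)$ and $\alpha \leq \alpha'$, then $\alpha \in \mathrm{PF}(G)$. Indeed, fix any $\varnothing \neq U \subseteq \widetilde{V}(G)$; applying Definition~\ref{def:gpf} to $\alpha'$ yields some $i \in U$ with $\alpha'_i \leq \mathrm{deg}^G_U(i) - 1$, and since $\alpha_i \leq \alpha'_i$ the very same $i$ witnesses the defining inequality for $\alpha$. As $U$ was arbitrary, $\alpha \in \mathrm{PF}(G)$. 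In particular, if $\alpha \leq \alpha'$ for some $\alpha' \in \mathrm{MPF}(G) \subseteq \mathrm{PF}(G)$, then $\alpha \in \mathrm{PF}(G)$, which is exactly the backward direction.

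For the forward implication the one ingredient that is not a tautology is that $\mathrm{PF}(G)$ is finite. I would obtain this by specializing Definition~\ref{def:gpf} to singletons: taking $U = \{i\}$ forces $\alpha_i \leq \mathrm{deg}^G_{\{i\}}(i) - 1$, so every coordinate of a $G$-parking function is bounded above and hence $\mathrm{PF}(G)$ is a finite subset of $\mathbb{N}^n$. Given any $\alpha \in \mathrm{PF}(G)$, I can then repeatedly pass to a strictly larger element of $\mathrm{PF}(G)$ whenever one exists; since $\mathrm{sum}$ strictly increases along such a chain while remaining bounded by $\sum_{i} \left( \mathrm{deg}^G_{\{i\}}(i) - 1 \right)$, the process terminates at some $\alpha' \in \mathrm{MPF}(G)$ with $\alpha \leq \alpha'$, as required.

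There is no genuine obstacle in this argument; the only point deserving a little care is the finiteness step, since ``maximal'' in the definition of $\mathrm{MPF}(G)$ is taken relative to $\mathrm{PF}(G)$ rather than to all of $\mathbb{N}^n$. Without the coordinatewise boundedness supplied by the singleton specialization, the existence of a maximal $G$-parking function dominating a given $\alpha$ would not be automatic, so it is worth isolating that observation explicitly.
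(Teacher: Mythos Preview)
Your argument is correct and is exactly the elementary unpacking the paper has in mind when it says ``This is an easy consequence of Definition~\ref{def:gpf}.'' The paper does not spell out any details, so your proof simply supplies the two observations (downward closure of $\mathrm{PF}(G)$ and finiteness via the singleton bound $\alpha_i \leq \mathrm{deg}_G(i)-1$) that make the fact immediate.
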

\begin{proof}
This is an easy consequence of Definition~\ref{def:gpf}.
\end{proof}

The final fact concerns another, extremely useful, combinatorial model for $\mathrm{MPF}(G)$ in terms of orientations. An \emph{orientation} of $G$ is a subset $\mathcal{O} \subseteq V(G)^2$ such that
\begin{itemize}
\item $(i,i) \notin \mathcal{O}$ for any $i$;
\item $(i,j) \in \mathcal{O}$ implies $\omega_G(\{i,j\}) \geq 1$ for all $\{i,j\} \in \binom{V(G)}{2}$;
\item for each $\{i,j\} \in \binom{V(G)}{2}$ with $\omega_G(\{i,j\}) \geq 1$, $\# \{(i,j),(j,i)\} \cap \mathcal{O} = 1$.
\end{itemize}
Given an orientation $\mathcal{O}$ of $G$ and $i \in V(G)$, the \emph{indegree of $i$ with respect to $\mathcal{O}$} is~$\mathrm{indeg}_{\mathcal{O}}(i) := \sum_{(j,i) \in \mathcal{O}} \omega_G(\{i,j\})$. We say $i \in V(G)$ is a \emph{source of $\mathcal{O}$} if~$\mathrm{indeg}_{\mathcal{O}}(i) = 0$ and we say it is a \emph{sink of $\mathcal{O}$} if~$\mathrm{indeg}_{\mathcal{O}}(i) = \mathrm{deg}_G(i)$. Here the \emph{degree of $i$ in $G$} is~$\mathrm{deg}_G(i) := \sum_{i \neq j \in V(G)} \omega_G(\{i,j\}) = \mathrm{deg}^{G}_{\{i\}}(i)$. Finally, we say $\mathcal{O}$ is \emph{acyclic} if there does not exist a sequence $(i_1,i_2),(i_2,i_3),\ldots,(i_{k},i_{k+1})$ with $(i_{j},i_{j+1}) \in \mathcal{O}$ for all~$1 \leq j \leq k$ and~$i_1 = i_{k+1}$. Let $\mathcal{A}(G)$ be the set of acyclic orientations of $G$ with unique source~$0$.

\begin{fact} \label{fact:orientations}
The map $\mathcal{O} \mapsto  (\mathrm{indeg}_\mathcal{O}(1)-1,\mathrm{indeg}_\mathcal{O}(2)-1,\ldots,\mathrm{indeg}_\mathcal{O}(n)-1)$ is a bijection between $\mathcal{A}(G)$ and~$\mathrm{MPF}(G)$.
\end{fact}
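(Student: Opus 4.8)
The plan is to verify the three standard ingredients of a bijection: that the map lands in $\mathrm{MPF}(G)$ (well-definedness), that it is injective, and that it is surjective. A useful preliminary observation is that, by the third bullet in the definition of an orientation, all parallel edges joining a pair $\{i,j\}$ receive a common direction, so an orientation of $G$ is the same thing as an orientation of its underlying simple support graph, with indegrees counted according to $\omega_G$. For well-definedness, fix $\mathcal{O} \in \mathcal{A}(G)$ and set $\alpha_i := \mathrm{indeg}_{\mathcal{O}}(i) - 1$. Since $0$ is the unique source, every non-root vertex has positive indegree, so $\alpha \in \mathbb{N}^n$. To see $\alpha \in \mathrm{PF}(G)$, given $\varnothing \neq U \subseteq \widetilde{V}(G)$ I restrict $\mathcal{O}$ to the edges internal to $U$; this sub-orientation is again acyclic, hence has a source $i \in U$, and all edges into $i$ must then come from $V(G) \setminus U$, giving $\mathrm{indeg}_{\mathcal{O}}(i) \le \mathrm{deg}^G_U(i)$, i.e.\ $\alpha_i \le \mathrm{deg}^G_U(i) - 1$, which is exactly the condition of Definition~\ref{def:gpf}. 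To upgrade ``$G$-parking function'' to ``maximal'' I would show $\mathrm{sum}(\alpha) = \#E(G) - n$ and that this is the largest possible sum of any $G$-parking function. The equality holds because $\sum_{i=1}^{n} \mathrm{indeg}_{\mathcal{O}}(i) = \#E(G)$ (each edge contributes its multiplicity to exactly one head, and the source $0$ contributes nothing). For the bound, any $\beta \in \mathrm{PF}(G)$ admits a ``peeling'' order $i_1,\ldots,i_n$ of $\widetilde{V}(G)$ obtained by repeatedly applying the parking condition to the shrinking sets $U_{k-1} := \{i_k,\ldots,i_n\}$, so that $\beta_{i_k} \le \mathrm{deg}^G_{U_{k-1}}(i_k) - 1$; summing over $k$ and noting that each edge is counted exactly once on the right yields $\mathrm{sum}(\beta) \le \#E(G) - n$. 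Since a $G$-parking function of maximal sum cannot be strictly dominated, $\alpha \in \mathrm{MPF}(G)$.

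For injectivity, suppose $\mathcal{O}, \mathcal{O}'$ have equal indegree sequences and let $F$ be the set of pairs they orient oppositely. Comparing indegrees at each vertex shows that, with respect to $\mathcal{O}$, the weighted indegree and weighted outdegree within $F$ coincide at every vertex. If $F$ were nonempty, then following out-edges within $F$ would never get stuck, since every vertex reached has positive $F$-indegree and hence positive $F$-outdegree; in a finite graph this produces a directed cycle, contradicting acyclicity of $\mathcal{O}$. Thus $F = \varnothing$ and $\mathcal{O} = \mathcal{O}'$.

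Surjectivity is where the real content lies. Given $\alpha \in \mathrm{MPF}(G)$, I run a burning-style greedy procedure: starting from $R := \widetilde{V}(G)$, the parking condition always supplies some $v \in R$ with $\mathrm{deg}^G_R(v) \ge \alpha_v + 1$; I remove it, repeat until $R$ is empty, and record the order $v_1,\ldots,v_n$ in which vertices are burned. Orienting every edge from the earlier to the later vertex of the resulting linear order $0, v_1, \ldots, v_n$ produces an acyclic orientation with unique source $0$, hence $\mathcal{O} \in \mathcal{A}(G)$, whose indegrees satisfy $\mathrm{indeg}_{\mathcal{O}}(v_k) = \mathrm{deg}^G_{R_k}(v_k) \ge \alpha_{v_k} + 1$, where $R_k$ is the set present when $v_k$ was burned. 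By the well-definedness step, the vector $\beta := (\mathrm{indeg}_{\mathcal{O}}(i) - 1)_{i}$ is again a $G$-parking function, and $\beta \ge \alpha$; maximality of $\alpha$ then forces $\beta = \alpha$, so every inequality above is an equality and $\mathcal{O}$ maps to $\alpha$. The main obstacle is precisely this last point---arranging that the greedily constructed orientation realizes $\alpha$ exactly rather than some strictly larger vector---which is exactly what the maximality of $\alpha$ is there to guarantee. (As a byproduct this shows every maximal $G$-parking function has sum $\#E(G)-n$, consistent with the well-definedness computation.)
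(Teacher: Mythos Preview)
Your argument is correct and complete. The paper does not actually prove this fact; it only cites \cite{benson2010gparking} and remarks that ``the proof uses Dhar's burning algorithm.'' Your surjectivity step (the peeling/burning order producing an acyclic orientation whose indegree vector dominates, and hence by maximality equals, $\alpha$) is precisely Dhar's algorithm in disguise, so your approach is the intended one; you have simply filled in the details that the paper leaves to the reference.
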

\begin{proof}
This is also well-known among those who study graphical parking functions; see~\cite[Theorem 3.1]{benson2010gparking}. The proof uses Dhar's burning algorithm.
\end{proof}

The symmetric group $\mathfrak{S}_n$ acts on $\mathbb{N}^n$ by~$\sigma(\alpha) := (\alpha_{\sigma^{-1}(1)},\ldots,\alpha_{\sigma^{-1}(n)})$ for~ $\sigma \in \mathfrak{S}_n$ and~$\alpha = (\alpha_1,\ldots,\alpha_n) \in \mathbb{N}^n$. We will now classify the multigraphs $G$ for which $\mathrm{PF}(G)$ is invariant under this action of~$\mathfrak{S}_n$; an immediate consequence will be a classification of those multigraphs whose parking functions are also the parking functions of some vector. To that end let us define some families of multigraphs. A \emph{cycle} $C$ is a connected multigraph with~$\mathrm{deg}_C(i) = 2$ for all $i \in V(C)$. For $a \geq 1$ let us say $G$ is an \emph{$a$-cycle} if there is some cycle $C$ with $V(C) = V(G)$ and $\omega_G(e) = a\cdot \omega_C(e)$ for all $e \in \binom{V(G)}{2}$. Similarly, for $a \geq 1$ let us say $G$ is an \emph{$a$-tree} if there is some tree~$T$ with $V(T) = V(G)$ and~$\omega_G(e) = a\cdot \omega_T(e)$ for all $e \in \binom{V(G)}{2}$. Finally, for~$a,b \geq 1$ let~$K_{n+1}^{a,b}$ be the multigraph with vertex set~$V(K_{n+1}^{a,b}) := \{0,1,\ldots,n\}$ and edge-weight function 
\[ \omega_{K_{n+1}^{a,b}}(\{i,j\}) := \begin{cases} a &\textrm{if $i=0$ or $j=0$} \\ b &\textrm{otherwise}.\end{cases} \]

\begin{thm} \label{thm:gpfandxpf}
If $\mathrm{PF}(G)$ is invariant under the action of $\mathfrak{S}_n$ on $\mathbb{N}^n$ then one of the following holds:
\begin{itemize}
\item $G$ is an $a$-tree and $\mathrm{PF}(G) = \mathrm{PF}( (a,\overbrace{0,0,\ldots,0}^{n-1}) )$ for some $a\geq 1$;
\item $G$ is an $a$-cycle and $\mathrm{PF}(G) = \mathrm{PF}( (a,\overbrace{0,0,\ldots,0}^{n-2},a) )$ for some $a\geq 1$;
\item $G = K_{n+1}^{a,b}$ and $\mathrm{PF}(G) = \mathrm{PF}( (a,\overbrace{b,b,\ldots,b}^{n-1}) )$ for some $a,b\geq 1$.
\end{itemize}
On the other hand, if $\mathrm{PF}(G)$ is not invariant under the action of the symmetric group then~$\mathrm{PF}(G) \neq \mathrm{PF}(\mathbf{x})$ for any $\mathbf{x} \in \mathbb{N}^{n}$.
\end{thm}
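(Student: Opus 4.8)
The easy direction is the concluding sentence. Observe first that for every $\mathbf{x} \in \mathbb{N}^n$ the set $\mathrm{PF}(\mathbf{x})$ is $\mathfrak{S}_n$-invariant: membership is phrased in Definition~\ref{def:xpf} entirely in terms of the weakly increasing rearrangement of $(\alpha_1,\ldots,\alpha_n)$, which permuting coordinates does not change. Hence if $\mathrm{PF}(G)$ fails to be $\mathfrak{S}_n$-invariant it cannot coincide with any $\mathrm{PF}(\mathbf{x})$, which is precisely the last assertion. For the forward classification I assume $G$ connected (otherwise $\mathrm{PF}(G)=\varnothing$ and there is nothing to classify). Since the $\mathfrak{S}_n$-action preserves the partial order on $\mathbb{N}^n$ and $\mathrm{PF}(G)$ is the down-closure of its antichain $\mathrm{MPF}(G)$ of maximal elements (Fact~\ref{fact:max}), $\mathrm{PF}(G)$ is invariant if and only if $\mathrm{MPF}(G)$ is; and via Fact~\ref{fact:orientations} I regard $\mathrm{MPF}(G)$ as the set of indegree-minus-one vectors of acyclic orientations with unique source $0$, each of which has common coordinate sum $\#E(G)-n$.

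The engine of the proof is a peeling estimate. Write $g(U)$ for the number of edges of $G$ incident to $U$ (with multiplicity). Applying Definition~\ref{def:gpf} repeatedly to a shrinking subset shows that $\sum_{i\in U}\beta_i \le g(U)-|U|$ for every $\beta \in \mathrm{PF}(G)$ and every $U$. For a fixed $\alpha \in \mathrm{MPF}(G)$ this inequality is forced to be an equality along a nested chain $S_1 \subset \cdots \subset S_n = \widetilde V(G)$ (because $\alpha$ attains the maximal sum $\#E(G)-n$), where $S_j$ carries the top $j$ entries of $\alpha$; on the other hand, invariance provides for each $U$ an arrangement of the sorted profile $\mu_1\ge\cdots\ge\mu_n$ of $\alpha$ placing its $|U|$ largest values on $U$, whence $\mu_1+\cdots+\mu_{|U|}\le g(U)-|U|$. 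Combining the two yields, for every $j$,
\[ \min_{|U|=j} g(U) \;=\; g(S_j) \;=\; j + \mu_1 + \cdots + \mu_j . \]
The left-hand side depends only on $G$, so the profile $\mu$ is determined by $G$ via $1+\mu_j = \min_{|U|=j}g(U)-\min_{|U|=j-1}g(U)$; in particular \emph{every} maximal $G$-parking function has the same sorted profile, so $\mathrm{MPF}(G)$ is a single $\mathfrak{S}_n$-orbit. Reading off the two ends gives $\mu_1 = \min_i \deg_G(i)-1$ and $\mu_n = \max_i \omega_G(\{0,i\})-1$.

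The classification is then driven by the shape of $\mu$. If $\mu$ is constant, then $\mathrm{MPF}(G)=\{(a-1,\ldots,a-1)\}$ with $a:=\mu_1+1$, equivalently $\#\mathcal{A}(G)=1$; a connected multigraph has a unique acyclic orientation with given source exactly when its underlying simple graph is a tree (a standard evaluation of the Tutte polynomial), and since all indegrees equal $a$ every multiplicity is $a$, so $G$ is an $a$-tree. If instead $\mu_1>\mu_2$, then placing the (now unique) top value at an arbitrary coordinate $i$ forces $S_1=\{i\}$ and hence $\deg_G(i)=\mu_1+1$ for \emph{every} $i$, so $G$ is regular on $\widetilde V(G)$; the level-two identity gives $\max_{i\ne j}\omega_G(\{i,j\})=\mu_1-\mu_2$, and pushing this through the higher increments together with the demand that \emph{all} $n!$ rearrangements of $\mu$ be realizable as acyclic orientations should pin the weights down, a profile $(2a-1,a-1,\ldots,a-1)$ forcing an $a$-cycle and a strictly decreasing arithmetic profile forcing the uniform root- and cross-weights of $K_{n+1}^{a,b}$. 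I expect this last reconstruction — excluding mixed profiles such as $\mu_1=\mu_2>\mu_n$ and ruling out spurious regular graphs, so that the covering identity plus full realizability leaves nothing outside the three families — to be the main obstacle.

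Finally I verify that each family does realize the stated vector parking function, which simultaneously reconfirms invariance; these are direct comparisons of Definition~\ref{def:gpf} with Definition~\ref{def:xpf}. For $G=a\cdot T$, choosing in any nonempty $U$ the vertex of $U$ closest to the root exhibits the parking condition whenever all entries are at most $a-1$, while the descendant set of a vertex holding a value $\ge a$ is a witness against membership; thus $\mathrm{PF}(G)=\{0,\ldots,a-1\}^n=\mathrm{PF}((a,0,\ldots,0))$. For an $a$-cycle the extremal sets $U$ are arcs, giving $\mathrm{PF}((a,0,\ldots,0,a))$, and for $K_{n+1}^{a,b}$ taking $U$ to be the coordinates carrying the largest values reproduces exactly the sorted inequalities $\alpha_{i_j}\le a+(j-1)b-1$ defining $\mathrm{PF}((a,b,\ldots,b))$.
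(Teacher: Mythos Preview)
Your opening steps --- the symmetry of $\mathrm{PF}(\mathbf{x})$, the reduction from $\mathrm{PF}(G)$ to $\mathrm{MPF}(G)$ via Fact~\ref{fact:max}, and the observation that all maximal parking functions have the same sum --- match the paper exactly. Your ``peeling estimate'' is a genuinely different and attractive ingredient: the identity
\[
\min_{|U|=j} g(U) \;=\; j+\mu_1+\cdots+\mu_j
\]
(derived by combining the inequality $\sum_{i\in U}\beta_i\le g(U)-|U|$ with the equality chain coming from a linear extension of an orientation in $\mathcal{A}(G)$, plus invariance) is correct, and its corollary that $\mathrm{MPF}(G)$ is a single $\mathfrak{S}_n$-orbit is stronger and cleaner than anything the paper isolates. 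Your treatment of the constant-profile case (forcing an $a$-tree) is also fine.

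The gap is exactly where you flag it: you have not carried out the classification once $\mu$ is non-constant. Knowing that $G$ is regular on $\widetilde V(G)$ and that $\max_{i\ne j}\omega_G(\{i,j\})=\mu_1-\mu_2$ is far from enough --- lots of regular multigraphs satisfy these and the higher $\min_{|U|=j}g(U)$ constraints without being $a$-cycles or $K_{n+1}^{a,b}$; what rules them out is precisely the requirement that \emph{every} permutation of $\mu$ arise as an indegree vector in $\mathcal{A}(G)$, and you have not shown how to exploit that. You also leave unaddressed the possibility $\mu_1=\mu_2>\mu_n$: it does not occur in the three target families, but your case split assumes $\mu_1>\mu_2$ whenever $\mu$ is non-constant, and that needs proof. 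In short, the hard part of the theorem is still open in your write-up.

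The paper avoids this reconstruction problem entirely by induction on $n$. For any non-cut, non-root vertex $i$ it sets up a bijection $\mathrm{MPF}(G-i)\longleftrightarrow\mathrm{MPF}(G;i)$ via $\beta\mapsto\beta+(\deg_G(i)-1)[i]$, so that invariance of $\mathrm{MPF}(G)$ forces invariance of $\mathrm{MPF}(G-i)$ and hence (inductively) $G-i$ is already an $a$-tree, an $a$-cycle, or $K_n^{a,b}$. From there it is a finite (if somewhat tedious) case analysis on how the deleted vertex can be reattached. Your profile identity could conceivably be grafted onto this framework to shorten some of that casework, but as written it does not replace it.
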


\begin{proof}
From Definition~\ref{def:xpf} it immediately follows that  for all $\mathbf{x} \in \mathbb{N}^n$,  $\mathrm{PF}(\mathbf{x})$ is invariant under the action of the symmetric group so the last sentence is certainly true. 

So let us consider when $\mathrm{PF}(G)$ can be invariant under~$\mathfrak{S}_n$. We claim that $\mathrm{PF}(G)$ is invariant under the action of the symmetric group if and only if $\mathrm{MPF}(G)$ is invariant under the action of the symmetric group. Fact~\ref{fact:orientations} implies that all maximal~$G$-parking functions have the same sum, which means that if $\alpha \in \mathrm{MPF}(G)$ and~$\sigma(\alpha) \in \mathrm{PF}(G)$ then~$\sigma(\alpha) \in \mathrm{MPF}(G)$. The other direction follows from Fact~\ref{fact:max}.

From now on we focus on maximal $G$-parking functions. We want to show that if~$\mathrm{MPF}(G)$ is invariant under the action of the symmetric group then $G$ is an $a$-tree or $G$ is an $a$-cycle or~$G = K_{n+1}^{a,b}$. We prove this by induction on $n := \#V(G) - 1$. The base case of $n=1$ is trivial. So suppose that~$n \geq 2$ and $\mathrm{MPF}(G)$ is invariant under~$\mathfrak{S}_n$. For~$i \in V(G)$ let $G - i$ denote the multigraph obtained by \emph{deleting}~$i$: this is the multigraph on vertex set~$V(G) - \{i\}$ and with edge-weight function equal to~$\omega_G$ restricted to~$\binom{V(G) - \{i\}}{2}$. A \emph{cut vertex} of~$G$ is a vertex for which $G-i$ is not connected. Define~$\mathcal{I}(G) \subseteq V(G)$ to be the set of vertices of $G$ that are not cut vertices and additionally are not equal to~$0$. It is a simple fact that every connected multigraph on at least two vertices has at least two vertices that are not cut vertices. Thus $\mathcal{I}(G)$ is in fact nonempty. The inductive step in the argument will be to consider $G-i$ and its set of parking functions for some~$i \in \mathcal{I}(G)$.

For any~$1 \leq i \leq n$ let~$\mathrm{MPF}(G;i)$ be the subset of~$\alpha = (\alpha_1,\ldots,\alpha_n) \in \mathrm{MPF}(G)$ for which $\alpha_i$ is maximized. We claim that for~$i \in \mathcal{I}(G)$ the map $\beta \mapsto \beta + (\mathrm{deg}_G(i) - 1)[i]$ is a bijection between $\mathrm{MPF}(G - i)$ and $\mathrm{MPF}(G;i)$ (when we continue to view $0$ as the root of $G-i$). Why is this? First, note that an equivalent description of $\mathcal{A}(G)$ is as the set of acyclic orientations $\mathcal{O}$ of $G$ for which there is a path from~$0$ to~$i$ for every~$i \in \widetilde{V}(G)$. Here a \emph{path in $\mathcal{O}$ from $s$ to $t$} is a sequence~$(i_1,i_2),(i_2,i_3),\ldots,(i_k,i_{k+1})$ with $i_1=s$, $i_{k+1} = t$, and $(i_j,i_{j+1}) \in \mathcal{O}$ for all~$1\leq j \leq k$. For $i \in V(G)$ let $\mathcal{A}(G;i)$ be the set of those orientations~$\mathcal{O} \in \mathcal{A}(G)$ for which $i$ is a sink. The alternate description of~$\mathcal{A}(G)$ implies that for any~$1\leq i \leq n$ there is a bijection between~$\mathcal{A}(G;i)$ and~$\mathcal{A}(G - i)$ given by $\mathcal{O} \mapsto \mathcal{O} \cap V(G-i)^2$. Next, note that~$\mathcal{A}(G - i) = \varnothing$ if and only if $i$ is a cut vertex (one way to see this is via Fact~\ref{fact:orientations}: we have~$\mathrm{MPF}(G-i) \neq \varnothing$ if and only if $\mathrm{PF}(G-i) \neq \varnothing$ and, as mentioned earlier, $\mathrm{PF}(G-i) \neq \varnothing$ if and only if~$G-i$ is connected). Thus~$\mathcal{A}(G;i)$ is nonempty for all $i\in \mathcal{I}(G)$. But as long as $\mathcal{A}(G;i)$ is nonempty, Fact~\ref{fact:orientations} says that the maximal value of $\alpha_i$ among~$(\alpha_1,\ldots,\alpha_n) \in \mathrm{MPF}(G)$ must be~$\mathrm{deg}_G(i)-1$. Moreover, in this case the elements of~$\mathrm{MPF}(G)$ that achieve this maximal~$\alpha_i$ are exactly the ones whose corresponding orientations (under the bijection of Fact~\ref{fact:orientations}) belong to~$\mathcal{A}(G;i)$. That the map~$\beta \mapsto \beta + (\mathrm{deg}_G(i) - 1)[i]$ is a bijection between $\mathrm{MPF}(G - i)$ and $\mathrm{MPF}(G;i)$ for~$i \in \mathcal{I}(G)$ then follows from the fact that $\mathcal{O} \mapsto \mathcal{O} \cap V(G-i)^2$ is a bijection between~$\mathcal{A}(G;i)$ and~$\mathcal{A}(G-i)$  (together with Fact~\ref{fact:orientations}).

There are two important consequences of the construction in the previous paragraph. The first consequence is that
\begin{equation*} \label{eqn:degrees}
\mathrm{deg}_G(i) = \mathrm{deg}_G(j) \textrm{ for all $i,j \in \mathcal{I}(G)$}. \tag{$\star$}
\end{equation*}
Why does~(\ref{eqn:degrees}) hold? Because, as mentioned above, if $i \in \mathcal{I}(G)$ then the maximal value of $\alpha_i$ among~$(\alpha_1,\ldots,\alpha_n) \in \mathrm{MPF}(G)$ is $\mathrm{deg}_G(i)-1$; but since $\mathrm{MPF}(G)$ is invariant under~$\mathfrak{S}_n$, this maximal value must be the same for all $1 \leq i \leq n$. The second consequence is that
\begin{equation*} \label{eqn:deletion}
\textrm{$G-i$ is an $a$-cycle or $G$ is an $a$-tree or $G-i = K_n^{a,b}$ for all $i \in \mathcal{I}(G)$}. \tag{$\star\star$}
\end{equation*}
Why does~(\ref{eqn:deletion}) hold? The invariance of $\mathrm{MPF}(G)$ under~$\mathfrak{S}_n$ implies that~$\mathrm{MPF}(G;i)$ is invariant under the subgroup of~$\mathfrak{S}_n$ consisting of permutations that fix~$i$ (this subgroup is of course isomorphic to~$\mathfrak{S}_{n-1}$.) But then the bijection between~$\mathrm{MPF}(G;i)$ and~$\mathrm{MPF}(G - i)$ from the last paragraph implies that $\mathrm{MPF}(G - i)$ is invariant under~$\mathfrak{S}_{n-1}$. By our inductive hypothesis we conclude~(\ref{eqn:deletion}).

As we pointed out earlier, $\mathcal{I}(G)$ is nonempty. Without loss of generality let us assume that~$n \in \mathcal{I}(G)$. By~(\ref{eqn:deletion}) we know that~$G - n$ is an $a$-tree or $G-n$ is an $a$-cycle or~$G - n = K_{n}^{a,b}$. We will consider each of these cases (although not in that order). First we define a bit of terminology that will aid in our analysis of these cases. For a multigraph~$H$ and~$i,j \in V(H)$, we say that $i$ is \emph{adjacent} to $j$ in $H$ if~$\omega_H(\{i,j\}) \neq 0$. We say $H$ contains a \emph{cycle of adjacent vertices} if there is a sequence~$i_1,i_2,\ldots,i_k,i_{k+1}$ of vertices for some $k \geq 3$ such that $i_1 = i_{k+1}$, $i_j$ and $i_{j+1}$ are adjacent for all $1 \leq j \leq k$, and $i_{j} \neq i_{j'}$ for any $1 \leq j,j' \leq k$. Note that if $H$ is an $a$-tree then~$H$ does not contain a cycle of adjacent vertices. Also note that if $H$ is an $a$-cycle and $\#V(H) \geq 3$ then every vertex of~$H$ is adjacent to exactly two vertices. And if $H = K^{a,b}_{\#V(H)+1}$ then every vertex of~$H$ is adjacent to every other vertex.

\medskip

\noindent \fbox{{\bf Case I}: $G - n$ is an $a$-cycle with $a \geq 1$ and $n \geq 4$.}

First suppose that $n$ is adjacent to only one $i \in V(G)$. We must have $\omega_G(\{i,n\}) = 2a$ because of~(\ref{eqn:degrees}). Take any~$1 \leq j < n$ with $i \neq j$ and observe that $G - j$ is not an $a'$-tree (because it has an edge of weight $\omega_{G}(\{i,n\}) = 2a > a$ as well as an edge of weight $a$) or an $a'$-cycle or equal to $K^{a',b}_{n}$ (because it has a vertex adjacent to only one other vertex), contradicting~(\ref{eqn:deletion}). Thus $n$ must be adjacent to more than one vertex. Now suppose that $n$ is adjacent to at least two other vertices. Then note that~$\mathcal{I}(G) = \widetilde{V}(G)$ and so because of~(\ref{eqn:degrees}) we must have $\omega_G(\{i,n\}) = \omega_G(\{j,n\})$ for all $1 \leq i,j < n$. Let~$1 \leq i < n$ be such that~$i$ is adjacent to $0$. We can see that $G - i$ is not an $a'$-tree (because it contains a cycle of adjacent vertices) or an $a'$-cycle (because it has a vertex adjacent to at least three other vertices) or $K^{a',b}_{n}$ (because it has at least two vertices that are not adjacent), contradicting~(\ref{eqn:deletion}). Therefore this case is actually impossible.

\medskip

\noindent \fbox{{\bf Case II}: $G - n = K^{a,b}_{n}$ with $a,b \geq 1$ and $n \geq 3$. }

This case includes when $G-n$ is an $a$-cycle and $n=3$. Suppose that~$n$ is adjacent to only one $i \in V(G)$. Then $\omega(\{i,n\}) = a(n-1) + b$ because of~(\ref{eqn:degrees}). Take any~$1 \leq j < n$ with~$i \neq j$ and observe that $G - j$ is not an $a'$-tree (because it contains an edge of weight $\omega(\{i,n\}) = a(n-1) + b > b$ as well as an edge of weight~$b$) or an~$a'$-cycle or~$K^{a',b'}_{n}$ (because it has a vertex adjacent to only one other vertex), contradicting~(\ref{eqn:deletion}). Thus~$n$ must be adjacent to more than one vertex. Now suppose that $n$ is adjacent to at least two other vertices. Then note that~$\mathcal{I}(G) = \widetilde{V}(G)$ and so because of~(\ref{eqn:degrees}) we must have $\omega_G(\{i,n\}) = \omega_G(\{j,n\})$ for all $1 \leq i,j < n$. Let $1 \leq i < n$. We see that~$G - i$ cannot be an $a'$-tree (because it contains a cycle of adjacent vertices) and~$G-i$ cannot be an $a'$-cycle with $n \geq 4$ (because any two of its vertices are adjacent). So from~(\ref{eqn:deletion}) we conclude $G-i = K^{a',b'}_{n}$. Moreover, by considering edges that belong to both~$G-i$ and~$G-n$ we must have~$a=a'$ and $b=b'$. This implies in particular that~$\omega_G(\{0,n\}) = a$ and~$\omega_G(\{j,n\}) = b$ for all~$1 \leq j < n$. Thus~$G = K^{a,b}_{n+1}$.

\medskip

\noindent \fbox{{\bf Case III}: $G - n$ is an $a$-tree with $a \geq 1$. }

This case includes when $G-n$ is an $a$-cycle and $n=2$, as well as~$G-n= K_2^{a,b}$. First suppose $n$ is adjacent to only one vertex. Then~$\mathcal{A}(G)$ has only one element and Fact~\ref{fact:orientations} implies that the only way that $\mathrm{MPF}(G)$ is invariant under~$\mathfrak{S}_n$ is if $G$ is an $a$-tree. Thus, suppose from now on that $n$ is adjacent to at least two vertices. If $n=2$ then we can see from~(\ref{eqn:degrees}) that $G = K^{a,b}_{3}$ since~$\mathcal{I}(G) = \{1,2\}$. So now suppose $n=3$. First suppose that $G - 3$ is
\begin{equation*} \label{eqn:g3case1} 
\begin{multlined}
\begin{tikzpicture}[scale=0.9,>=latex,auto,rotate=90]
	\SetFancyGraph
	\Vertex[LabelOut,Lpos=270, Ldist=0cm,x=0,y=1]{1}
	\Vertex[LabelOut,Lpos=270, Ldist=0cm,x=0,y=0]{0}%
	\Vertex[LabelOut,Lpos=270, Ldist=0cm,x=0,y=-1]{2}
	\Edges[style={thick},label=$a$](1,0)
	\Edges[style={thick},label=$a$](0,2)
\end{tikzpicture}
\end{multlined} \tag{\dag}
\end{equation*}
(Here we label an edge $e \in E(G)$ by $\omega_G(e)$, with an edge absent if it has weight zero.) With $G-3$ as in~(\ref{eqn:g3case1}) we have~$\mathcal{I}(G) = \{1,2,3\}$. Because of~(\ref{eqn:degrees}) it is impossible that~$3$ is adjacent to only~$1$ and~$0$, or to only~$2$ and~$0$. And because of~(\ref{eqn:degrees}) if~$3$ is adjacent to only~$1$ and~$2$ then~$G = C_4^a$. Lastly, let us rule out the possibility that $G$ looks like
\begin{center}
\begin{tikzpicture}[scale=0.9,>=latex,auto,rotate=90]
	\SetFancyGraph
	\Vertex[LabelOut,Lpos=90, Ldist=0cm,x=0,y=1]{1}
	\Vertex[LabelOut,Lpos=90, Ldist=0cm,x=0,y=0]{0}%
	\Vertex[LabelOut,Lpos=90, Ldist=0cm,x=0,y=-1]{2}
	\Vertex[LabelOut,Lpos=0, Ldist=0cm,x=-1,y=0]{3}
	\Edges[style={thick},label=$a$](1,0)
	\Edges[style={thick},label=$a$](0,2)
	\Edges[style={thick},label=$b$](0,3)
	\Edges[style={thick},label=$c$](3,1)
	\Edges[style={thick},label=$d$](2,3)
\end{tikzpicture}
\end{center}
for some $b,c,d \geq 1$. By applying~(\ref{eqn:deletion}) to~$1$, we see that we must have $b=a$. But then~$G$ cannot satisfy~(\ref{eqn:degrees}) because $\mathrm{deg}_G(3) = a+c+d>a+c=\mathrm{deg}_G(1)$. So indeed it is not possible for~$3$ to be adjacent to $0$, $1$, and $2$ when $G-3$ is as in~(\ref{eqn:g3case1}). We have considered all possibilities for $G-3$ as in~(\ref{eqn:g3case1}). Next suppose that $G - 3$ is
\begin{equation*} \label{eqn:g3case2}
\begin{multlined}
\begin{tikzpicture}[scale=0.9,>=latex,auto,rotate=90]
	\SetFancyGraph
	\Vertex[LabelOut,Lpos=270, Ldist=.1cm,x=0,y=1]{0}
	\Vertex[LabelOut,Lpos=270, Ldist=.1cm,x=0,y=0]{1}%
	\Vertex[LabelOut,Lpos=270, Ldist=.1cm,x=0,y=-1]{2}
	\Edges[style={thick},label=$a$](0,1)
	\Edges[style={thick},label=$a$](1,2)
\end{tikzpicture}
\end{multlined}
\tag{\dag\dag}
\end{equation*}
Because of~(\ref{eqn:degrees}) it is impossible  that $3$ is adjacent only to $1$ and~$0$. And because of~(\ref{eqn:degrees}) if~$3$ is adjacent to only $0$ and $2$ then $G = C_4^a$. Let us rule out the possibility that $G$ looks like
\begin{center}
\begin{tikzpicture}[scale=0.9,>=latex,auto,rotate=90]
	\SetFancyGraph
	\Vertex[LabelOut,Lpos=90, Ldist=0cm,x=0,y=1]{0}
	\Vertex[LabelOut,Lpos=90, Ldist=0cm,x=0,y=0]{1}%
	\Vertex[LabelOut,Lpos=90, Ldist=0cm,x=0,y=-1]{2}
	\Vertex[LabelOut,Lpos=0, Ldist=0cm,x=-1,y=0]{3}
	\Edges[style={thick},label=$a$](0,1)
	\Edges[style={thick},label=$a$](1,2)
	\Edges[style={thick},label=$c$](1,3)
	\Edges[style={thick},label=$b$](3,0)
	\Edges[style={thick},label=$d$](2,3)
\end{tikzpicture}
\end{center}
for some $b,c,d \geq 1$. By applying~(\ref{eqn:deletion}) to~$2$, we see that we must have $b=a$. But then~$G$ cannot satisfy~(\ref{eqn:degrees}) because $\mathrm{deg}_G(3) = a+c+d>a+d=\mathrm{deg}_G(2)$. So indeed it is not possible for~$3$ to be adjacent to $0$, $1$, and $2$ when as in~(\ref{eqn:g3case2}). Lastly, let us rule out the possibility that $G$ looks like
\begin{center}
\begin{tikzpicture}[scale=0.9,>=latex,auto,rotate=90]
	\SetFancyGraph
	\Vertex[LabelOut,Lpos=90, Ldist=.1cm,x=0,y=1]{0}
	\Vertex[LabelOut,Lpos=0, Ldist=.1cm,x=0,y=0]{1}
	\Vertex[LabelOut,Lpos=0, Ldist=.1cm,x=-0.75,y=-1]{2}
	\Vertex[LabelOut,Lpos=0, Ldist=.1cm,x=0.75,y=-1]{3}
	\Edges[style={thick},label=$a$](0,1)
	\Edges[style={thick},label=$a$](2,1)
	\Edges[style={thick},label=$c$](3,2)
	\Edges[style={thick},label=$b$](1,3)
\end{tikzpicture}
\end{center}
for some~$b,c \geq 1$: if it did, then for all~$\mathcal{O} \in \mathcal{A}(G)$ we have $\mathrm{indeg}_{\mathcal{O}}(1) = a < a + c$ but there is some~$\mathcal{O}' \in \mathcal{A}$ with~$\mathrm{indeg}_{\mathcal{O}'}(2) = a+c$ which by Fact~\ref{fact:orientations} directly implies that~$\mathrm{MPF}(G)$ is not invariant under $\mathfrak{S}_n$. We have considered all possibilities for $G-3$ as in~(\ref{eqn:g3case2}). Up to isomorphism, $G- n$ can only look like~(\ref{eqn:g3case1}) or~(\ref{eqn:g3case2}) when $n = 3$.

So finally let us suppose that~$n \geq 4$. Let $T$ be the tree such that $\omega_{G - n} = a \cdot \omega_T$. Observe that $\mathcal{I}(G- n)$ is the subset of elements in~$\{1,\ldots,n-1\}$ that are maximal with respect to~$\leq_T$, so in particular $\mathcal{I}(G- n)$ is nonempty. Note also that~$\mathcal{I}(G- n) \subseteq \mathcal{I}(G)$ because $n$ is adjacent to at least two vertices. First suppose that~$n$ is adjacent to three or more vertices. Then for any~$i \in \mathcal{I}(G- n)$ we will get that~$G - i$ is not~$K_n^{a',b'}$ (because it contains two vertices that are not adjacent) and is not an~$a'$-tree (because it contains a cycle of adjacent vertices). Moreover, by our analysis of~Case~I we also know that~$G - i$ cannot be an $a'$-cycle for any $i \in \mathcal{I}(G)$. This contradicts~(\ref{eqn:deletion}). So it is not possible that $n$ is adjacent to three or more vertices. Next suppose that there is~$i \in \mathcal{I}(G- n)$ which is not adjacent to~$n$. Then we will again have that~$G - i$ is not~$K_n^{a',b'}$ (because it contains two vertices that are not adjacent) and is not an~$a'$-tree (because it contains a cycle of adjacent vertices) and is not an~$a'$-cycle (by appealing to our analysis of~Case~I), contradicting~(\ref{eqn:deletion}). We conclude that $n$ is adjacent to exactly two vertices; more specifically, $n$ is adjacent to all of the vertices in~$\mathcal{I}(G- n)$ (which contains either one or two vertices) and at most one other vertex (in the case where~$\#\mathcal{I}(G-n) = 1$). Therefore, because $\mathcal{I}(G- n)$ is the set of maximal elements of~$T$ with respect to $\leq_T$, and also by applying~(\ref{eqn:deletion}) to each~$i \in \mathcal{I}(G)$ to check that $G-i$ is an $a$-tree, we see that~$G$ looks like
\begin{center}
\begin{tikzpicture}[scale=0.9,>=latex,auto,rotate=90]
	\SetFancyGraph
	\Vertex[LabelOut,Lpos=90, Ldist=.1cm,x=0,y=2]{0}
	\Vertex[LabelOut,Lpos=0, Ldist=.1cm,x=0,y=0]{j}
	\Vertex[NoLabel,x=-0.75,y=-1]{b}
	\Vertex[NoLabel,x=0.75,y=-1]{c}
	\Vertex[LabelOut,Lpos=0, Ldist=.05cm,,x=0,y=-2]{n}
	\Edges[style={thick,dashed},label=$a$](0,j)
	\Edges[style={thick},label=$a$](b,j)
	\Edges[style={thick,dashed},label=$a$](n,b)
	\Edges[style={thick,dashed},label=$a$](c,n)
	\Edges[style={thick},label=$a$](j,c)
\end{tikzpicture}
\end{center}
(The dotted lines indicate paths of edges of weight~$a$.) Suppose vertex $j$ above is not $0$: then for all $\mathcal{O} \in \mathcal{A}(G)$ we have $\mathrm{indeg}_{\mathcal{O}}(j) = a < 2a$ but there is~$\mathcal{O}' \in \mathcal{A}$ with $\mathrm{indeg}_{\mathcal{O}'}(n) = 2a$ which by Fact~\ref{fact:orientations} directly implies that $\mathrm{MPF}(G)$ is not invariant under~$\mathfrak{S}_n$. Thus we must have that~$j = 0$ and that $G$ is an $a$-cycle.

\medskip

To finish the proof, it is straightforward to verify that in all these cases where~$\mathrm{PF}(G)$ is invariant under~$\mathfrak{S}_n$ it is equal to $\mathrm{PF}(\mathbf{x})$ for the claimed~$\mathbf{x} = (x_1,\ldots,x_n) \in \mathbb{N}^n$. For this purpose it is useful to observe that $\alpha \in \mathbb{N}^n$ is an $\mathbf{x}$-parking function if and only if~$\alpha \leq \alpha'$ for some maximal $\mathbf{x}$-parking function $\alpha'$. Thus, thanks to Fact~\ref{fact:max}, we need only check that the maximal $\mathbf{x}$-parking functions are the same as the maximal $G$-parking functions for the appropriate $G$ and $\mathbf{x}$. The maximal $G$-parking are easy to understand because of Fact~\ref{fact:orientations}, while the maximal $\mathbf{x}$-parking functions are merely the permutations of~$(x_1-1,x_1+x_2-1,\ldots,x_1+x_2+\cdots+x_n-1)$.
\end{proof}

\begin{remark}
There are simple product formulas for $\#\mathrm{PF}(\mathbf{x})$ for all of the~$\mathbf{x}$ appearing in~Theorem~\ref{thm:gpfandxpf}. Namely,
\begin{itemize}
\item $\#\mathrm{PF}( (a,\overbrace{0,0,\ldots,0}^{n-1}) ) = a^n$;
\item $\#\mathrm{PF}( (a,\overbrace{0,0,\ldots,0}^{n-2},a) ) = (n+1)a^n$;
\item $\#\mathrm{PF}( (a,\overbrace{b,b,\ldots,b}^{n-1}) = a(a+nb)^{n-1}$.
\end{itemize}
The first two formulas are trivial; for the third see~\cite[Equation~(7)]{stanley2002polytope} or~\cite[Theorem~1]{yan2000enumeration}. 
\end{remark}

\section{The multigraph DFS-burning algorithm}

In this section we extend the work of~\cite{perkinson2013gparking} to multigraphs. Thus for a multigraph~$G$ we relate the reversed sum enumerator of $G$-parking functions to the $\kappa$-number (generalized inversion number) enumerator of spanning trees of $G$. We need to define these $\kappa$-numbers. Let~$G$ be a fixed connected multigraph with $V(G) := \{0,1,\ldots,n\}$ and root~$0$. Following Gessel~\cite{gessel1995enumerative}, for a spanning tree~$T \in \mathrm{SPT}(G)$ and a total order~$\prec$ on~$\widetilde{V}(G)$ we define
\[ \kappa(G,T,\prec) := \sum_{\substack{i,j \in \widetilde{V}(G), \\ i \leq_T j, \; j \prec i }} \omega_{G}(\{\mathrm{par}_T(i),j\}). \]

\begin{thm} \label{thm:gpfrsum}
We have
\[ \sum_{\alpha \in \mathrm{PF}(G)} q^{\mathrm{rsum}(\alpha)} = \sum_{T \in \mathrm{SPT}(G)}q^{\kappa(G,T,\prec)} \cdot \left(\prod_{e \in E(T)} [\omega_{G}(e)]_q\right)\]
where $\prec$ is any total order on $\widetilde{V}(G)$.
\end{thm}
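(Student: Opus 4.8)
The plan is to extend the DFS--burning bijection of Perkinson--Yang--Yu~\cite{perkinson2013gparking} from simple graphs to multigraphs, the only genuinely new feature being that an edge bundle of multiplicity $\omega_G(e)$ should contribute a $q$-number $[\omega_G(e)]_q$ rather than a single unit. Concretely, I would construct a bijection between $\mathrm{PF}(G)$ and the set of pairs $(T,(c_e)_{e \in E(T)})$ where $T \in \mathrm{SPT}(G)$ and $0 \leq c_e \leq \omega_G(e)-1$ for each tree edge $e$, under which $\mathrm{rsum}(\alpha) = \kappa(G,T,\prec) + \sum_{e \in E(T)} c_e$. Summing $q^{\mathrm{rsum}(\alpha)}$ over each fiber then produces $q^{\kappa(G,T,\prec)} \prod_{e \in E(T)} [\omega_G(e)]_q$, and summing over $T$ gives the theorem; setting $q := 1$ recovers Fact~\ref{fact:matrixtree}, and the case $\omega_G \equiv 1$ recovers~(\ref{eqn:graphical}).

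First I would set up the multigraph version of Dhar's burning algorithm: start a fire at the root $0$, maintain for each vertex $i$ a count of its incident edges joining it to already-burnt vertices, and ignite $i$ once this count exceeds $\alpha_i$, where a bundle of multiplicity $\omega_G(\{i,j\})$ contributes all $\omega_G(\{i,j\})$ of its edges to the count at $i$ at the instant $j$ burns. As in~\cite{perkinson2013gparking}, among the vertices eligible to be explored we break ties by a depth-first search guided by the total order $\prec$, and we record as $\mathrm{par}_T(i)$ the burnt vertex whose bundle pushes $i$ over its threshold; the collection of these igniting bundles is the spanning tree $T$. The essential local observation is that if $i$ ignites via its parent bundle $e = \{i,\mathrm{par}_T(i)\}$, then just before $\mathrm{par}_T(i)$ burns the count at $i$ equals some value $s_i$ with $s_i \leq \alpha_i \leq s_i + \omega_G(e)-1$; thus for $T$ and $\prec$ fixed, each $\alpha_i$ may be varied independently throughout an interval of length $\omega_G(e)$ without changing the burning order (hence without changing any $s_i$). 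Consequently the fiber of the burning map over a fixed $T$ is a product box, the $\omega_G(e)$ choices at $i$ are recorded by a slack $c_e \in \{0,\dots,\omega_G(e)-1\}$ measuring how far $\alpha_i$ falls below its maximal value, and the reversed sums over the fiber realize exactly $q^{\kappa(G,T,\prec)}\prod_{e} [\omega_G(e)]_q$ once one identifies the reversed sum of the maximal-sum fiber element (all slacks zero) with $\kappa(G,T,\prec)$. Verifying that the algorithm terminates precisely on $G$-parking functions, and that $(T,(c_e)) \mapsto \alpha$ is a genuine two-sided inverse, is routine bookkeeping in the spirit of Dhar's algorithm.

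The crux is the identification of this minimal reversed sum with Gessel's $\kappa$-number. Writing $s_i$ for the count defined above, one computes $\mathrm{sum}(\alpha)$ for the maximal-sum fiber element and reduces the claim to the purely combinatorial identity
\[ \kappa(G,T,\prec) = \#E(G) - \sum_{e \in E(T)} \omega_G(e) - \sum_{i \in \widetilde{V}(G)} s_i. \]
The right-hand side is the total weight of the burnt edges that are \emph{wasted}, in the sense that they join a vertex $i$ to vertices burnt strictly before $\mathrm{par}_T(i)$; since tree edges never contribute to any $s_i$, this is a sum over non-tree bundles only. The content is to show that a non-tree bundle $\{\mathrm{par}_T(i),j\}$ with $i \leq_T j$ is wasted exactly when $j \prec i$, so that the wasted weight reproduces $\sum_{i \leq_T j,\, j \prec i} \omega_G(\{\mathrm{par}_T(i),j\}) = \kappa(G,T,\prec)$. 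This is where the depth-first nature of the search is indispensable and where I expect the main difficulty to lie: one must argue that the order in which vertices burn, as dictated by $\prec$ together with the DFS exploration of $T$, converts the burning-order conditions governing wasted edges into precisely the inversion condition $j \prec i$, including the vanishing of the contributions of cross edges between $\leq_T$-incomparable vertices. This is exactly the delicate bookkeeping carried out in~\cite{perkinson2013gparking} for simple graphs, and the multigraph case follows the same lines since multiplicities only rescale each bundle's contribution uniformly.

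Finally, having established the fiberwise identity $\sum_{\alpha} q^{\mathrm{rsum}(\alpha)} = \sum_{T} q^{\kappa(G,T,\prec)} \prod_{e \in E(T)} [\omega_G(e)]_q$, I would remark that because the left-hand side makes no reference to $\prec$, the right-hand side is independent of the chosen total order $\prec$, recovering Gessel's observation that the $\kappa$-enumerator does not depend on the order and matching the statement of the theorem, which allows $\prec$ to be arbitrary.
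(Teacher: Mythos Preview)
Your approach is the same as the paper's: both construct a bijection from $\mathrm{PF}(G)$ to pairs $(T,\ell)$ of a spanning tree together with an edge-labeling $\ell(e)\in\{0,\dots,\omega_G(e)-1\}$ (your $c_e$) via a multigraph extension of the Perkinson--Yang--Yu DFS-burning algorithm, satisfying $\mathrm{rsum}(\alpha)=\kappa(G,T,\prec)+\sum_e\ell(e)$, and both defer the detailed inversion bookkeeping to~\cite{perkinson2013gparking}. The paper packages this as Algorithms~\ref{alg:multidfs}--\ref{alg:multidfsinv} and Theorem~\ref{thm:multidfs}, whose proof is literally ``so similar to that of the original algorithm in~\cite{perkinson2013gparking} that we will not repeat it here.''

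Two small slips in your sketch to watch when you write it out. First, $s_i$ should be the count at $i$ just before the parent \emph{succeeds in igniting}~$i$, not ``just before $\mathrm{par}_T(i)$ burns'': in the DFS the parent may first explore $\prec$-larger siblings of $i$, and vertices in those subtrees can burn further edges toward $i$ before the parent returns to try $i$. Second, your displayed right-hand side $\#E(G)-\sum_{e\in E(T)}\omega_G(e)-\sum_i s_i$ counts the \emph{surviving} (unburnt, non-tree) edges, not the ``wasted'' burnt ones; it is the surviving bundles that are parametrized as $\{\mathrm{par}_T(i),j\}$ with $i\leq_T j$ and $j\prec i$, which is exactly the definition of $\kappa$. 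Neither point changes the strategy, and both are straightened out by writing the algorithm explicitly as the paper does.
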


Theorem~\ref{thm:gpfrsum} will follow immediately from the existence of a bijection
\[ \varphi^{\prec} \colon \mathrm{PF}(G) \to \left\{ (T,\ell)\colon \parbox{3in}{\begin{center} $T \in \mathrm{SPT}(n+1), \; \ell\colon E(T) \to \mathbb{N},$ \\ $\ell(e) \in \{0,1,\ldots,\omega_G(e)-1\} \textrm{ for all } e \in E(T)$ \end{center} } \right\}\]
satisfying $\mathrm{rsum}(\alpha) = \kappa(G,T,\prec) + \sum_{e \in E(T)}\ell(e)$ when $(T,\ell) = \varphi^{\prec}(\alpha)$. We establish the existence of such a bijection in Theorem~\ref{thm:multidfs} below. This bijection~$\varphi^{\prec}$ is a straightforward extension of the DFS-burning algorithm of~\cite{perkinson2013gparking} to multigraphs.

Let us recap the ``generic Dhar's burning algorithm.'' Dhar's burning algorithm takes an input a $G$-parking function and outputs a spanning tree of $G$. So suppose we are given some $\alpha = (\alpha_1,\ldots,\alpha_n) \in \mathrm{PF}(G)$. To initialize, we put $\alpha_i$ chips on vertex~$i$ for all~$1 \leq i \leq n$. The process is called the ``burning algorithm'' because we imagine that a fire is spreading throughout the graph: the chips can be seen as impediments that the fire has to burn through. At each step of the burning algorithm we have some collection of vertices that are \emph{burning} together with a rooted tree that spans the burning vertices. Initially only the root $0$ is burning. At each step of the algorithm we choose an edge connecting a burning vertex to one that is not yet burning. If the non-burning vertex has a positive number of chips on it, we remove one chip from that vertex and we \emph{burn} the edge we selected, removing it from consideration at later steps of the algorithm. If the non-burning vertex has no chips on it, that vertex starts to burn and we include the selected edge in the tree we are building up. Once all the vertices are burning we terminate and output the resulting spanning tree $T$. Importantly, the burning algorithm keeps track of the sum of its input: the sum of $\alpha$ is precisely the number of edges that were burnt. The reversed sum is the number of edges that were not burnt and are not in~$T$; we call these edges the \emph{surviving edges}.

To turn the ``generic'' burning algorithm into an actual, deterministic process we need to specify a procedure for choosing an edge between a burning and non-burning vertex at each step: let us call this choice the choice of which edge to \emph{burn along}. In the Cori-Le~Borgne~\cite{cori2003sand} variant of the burning algorithm we have some fixed total order on the edges and always choose to burn along the maximum edge according to this order. The surviving edges are precisely the edges that are externally active in the output tree~$T$; consequently, the Cori-Le~Borgne algorithm relates reversed sums of~$G$-parking functions to external activities of spanning trees of $G$. In the DFS-burning algorithm of Perkinson-Yang-Yu~\cite{perkinson2013gparking} we have some fixed total order~$\prec$ on the vertices and choose to burn along the edge connecting the burning vertices to the maximum non-burning vertex according to~$\prec$, but in a depth-first search fashion. In the case of a simple graph~$G$ (i.e.,~a graph with no multiple edges), the surviving edges are precisely those of the form $\{\mathrm{par}_T(i),j\}$ with~$i \leq_T j$ but~$j \prec i$ (where~$T$ is the output tree); consequently, the DFS-burning algorithm relates reversed sums of~$G$-parking functions to~$\kappa$-numbers of spanning trees of $G$. Incidentally, there are many other possible graph search procedures that one could use to specify which edge to burn along. Some of these, such as \emph{breadth-first search} and the  \emph{neighbor-first search} of Gessel and Sagan~\cite{gessel1996tutte} were explored by Kosti\'{c} and Yan~\cite{kostic2008multiparking}. A large family of graph search procedures were also investigated by Chebikin and Pylyavskyy in~\cite{chebikin2005family}. Each choice of graph search procedure relates reversed sum to some new spanning tree statistic.

The one wrinkle that can occur when applying the DFS-burning algorithm to multigraphs is that some of the surviving edges can be parallel to edges of the output tree~$T$, in which case these edges do not correspond to inversions. But all we have to do to correct for this is to give the output tree~$T$ an edge-labeling function $\ell\colon E(T) \to \mathbb{N}$ that records how many surviving edges were parallel to each $e \in E(T)$. Algorithm~\ref{alg:multidfs} gives pseudocode for the resulting multigraph DFS-burning algorithm. Algorithm~\ref{alg:multidfsinv} gives pseudocode for the inverse of this algorithm: the inverse algorithm uses a very similar depth-first search burning procedure but now instead of removing chips, it adds chips to vertices as edges are burnt.
  
 {\small
\begin{algorithm}
\caption{ {\bf Multigraph DFS-burning algorithm}.} \label{alg:multidfs}
\begin{algorithmic}[1]
  \Statex
  \Statex{\hspace{-0.5cm}\sc algorithm (with respect to multigraph $G$ and total order $\prec$ on $\widetilde{V}(G)$)}
  \Statex{\bf Input:} $\alpha = (\alpha_1,\ldots,\alpha_n) \in \mathbb{N}^{n}$
  \State $\texttt{burning\_vertices} := \{0\}$
  \State $\texttt{burnt\_edges} := \varnothing$
   \State $T := \textrm{ multigraph with $V(T) := \{0\}$ and $E(T) := \varnothing$}$
  \State $\ell := \textrm{ unique map $E(T) \to \mathbb{N}$}$
  \State $\texttt{head} := 0$
  \ForAll{$j \in V(G) - \texttt{burning\_vertices}$ in order from max to min according to $\prec$} \label{line:loop}
  	\ForAll{$e = \{\texttt{head},j\} \in E(G) - \texttt{burnt\_edges}$}
		\If{$\alpha_j \geq 1$}
			\State $\texttt{burnt\_edges} := \texttt{burnt\_edges} \cup \{e\}$
			\State $\alpha_j := \alpha_j - 1$
		\Else
			\State $\texttt{burning\_vertices} := \texttt{burning\_vertices} \cup \{j\}$
			\State $V(T) := V(T) \cup \{j\}$
			\State $E(T) := E(T) \cup \{e\}$
			\State $\ell(e) := \#\{f \in E(G) - (\texttt{burnt\_edges} \cup \{e\})\colon f = \{\texttt{head},j\}\}$
			\State $\texttt{head} := j$
			\State \textbf{break} out of current \textbf{for} loops and \Goto{line:loop}
		\EndIf
	\EndFor
  \EndFor
  \If{$\texttt{head} \neq 0$}
  	\State $\texttt{head} := \mathrm{par}_T(\texttt{head})$
	\State \Goto{line:loop}
  \EndIf
  \Statex{\bf Output:} {$(T,\ell)$}
  \end{algorithmic}
\end{algorithm}
}

 {\small
\begin{algorithm}
\caption{ {\bf Multigraph DFS-burning inverse algorithm}.} \label{alg:multidfsinv}
\begin{algorithmic}[1]
  \Statex
  \Statex{\hspace{-0.5cm}\sc algorithm (with respect to multigraph $G$ and total order $\prec$ on $\widetilde{V}(G)$)}
  \Statex{\bf Input:} $(T,\ell)$ with $T \in \mathrm{SPT}(G)$ and $\ell\colon E(T) \to \mathbb{N}$
  \State $\texttt{burning\_vertices} := \{0\}$
  \State $\texttt{burnt\_edges} := \varnothing$
   \State $\alpha = (\alpha_1,\ldots,\alpha_n) := (0,0,\ldots,0)$
  \State $\texttt{head} := 0$
  \ForAll{$j \in V(G) - \texttt{burning\_vertices}$ in order from max to min according to $\prec$} \label{line:loop2}
  	\ForAll{$e = \{\texttt{head},j\} \in E(G) - \texttt{burnt\_edges} $}
		\If{$e \in E(T)$ and $\ell(e) = \#\{f \in E(G) - (\texttt{burnt\_edges} \cup \{e\})\colon f = \{\texttt{head},j\}\}$}
			\State $\texttt{burning\_vertices} := \texttt{burning\_vertices} \cup \{j\}$
			\State $\texttt{head} := j$
			\State \textbf{break} out of current \textbf{for} loops and \Goto{line:loop2}	
		\Else
			\State $\texttt{burnt\_edges} := \texttt{burnt\_edges} \cup \{e\}$
			\State $\alpha_j := \alpha_j + 1$			
		\EndIf
	\EndFor
  \EndFor
  \If{$\texttt{head} \neq 0$}
  	\State $\texttt{head} := \mathrm{par}_T(\texttt{head})$
	\State \Goto{line:loop2}
  \EndIf
  \Statex{\bf Output:} {$\alpha$}
  \end{algorithmic}
\end{algorithm}
}

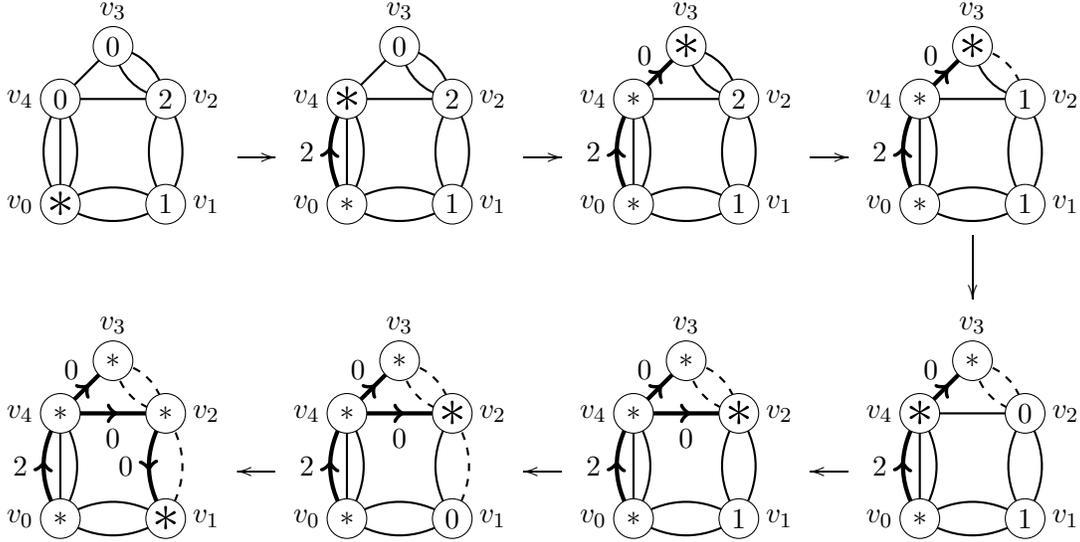
\begin{figure}
 \centerline{
 \xymatrix@C=3ex{
 \begin{tikzpicture}[scale=0.7]
	\SetFancyGraph
	\Vertex[LabelOut,Lpos=180, Ldist=.15cm,x=0,y=0,L={v_0}]{0}
	\Vertex[LabelOut,Lpos=0, Ldist=.15cm,x=2,y=0,L={v_1}]{1}
	\Vertex[LabelOut,Lpos=0, Ldist=.15cm,x=2,y=2,L={v_2}]{2}
	\Vertex[LabelOut,Lpos=90, Ldist=.15cm,x=1,y=3,L={v_3}]{3}
	\Vertex[LabelOut,Lpos=180, Ldist=.15cm,x=0,y=2,L={v_4}]{4}
	\Edges[style={thick,bend left}](0,1)
	\Edges[style={thick, bend right}](0,1)
	\Edges[style={thick, bend left}](1,2)
	\Edges[style={thick, bend right}](1,2)
	\Edges[style={thick, bend left}](2,3)
	\Edges[style={thick, bend right}](2,3)
	\Edges[style={thick}](2,4)
	\Edges[style={thick}](3,4)
	\Edges[style={thick, bend left}](0,4)
	\Edges[style={thick}](0,4)
	\Edges[style={thick, bend right}](0,4)
	\node[chips] at (0) {\huge{$*$}};
	\node[chips] at (1) {$1$};
	\node[chips] at (2) {$2$};
	\node[chips] at (3) {$0$};
	\node[chips] at (4) {$0$};
\end{tikzpicture} \ar@<5ex>[r] & \begin{tikzpicture}[scale=0.7,auto]
	\SetFancyGraph
	\Vertex[LabelOut,Lpos=180, Ldist=.15cm,x=0,y=0,L={v_0}]{0}
	\Vertex[LabelOut,Lpos=0, Ldist=.15cm,x=2,y=0,L={v_1}]{1}
	\Vertex[LabelOut,Lpos=0, Ldist=.15cm,x=2,y=2,L={v_2}]{2}
	\Vertex[LabelOut,Lpos=90, Ldist=.15cm,x=1,y=3,L={v_3}]{3}
	\Vertex[LabelOut,Lpos=180, Ldist=.15cm,x=0,y=2,L={v_4}]{4}
	\Edges[style={thick,bend left}](0,1)
	\Edges[style={thick, bend right}](0,1)
	\Edges[style={thick, bend left}](1,2)
	\Edges[style={thick, bend right}](1,2)
	\Edges[style={thick, bend left}](2,3)
	\Edges[style={thick, bend right}](2,3)
	\Edges[style={thick}](2,4)
	\Edges[style={thick}](3,4)
	\tikzstyle{LabelStyle}=[left=3pt,inner sep=0pt,outer sep=3pt]
	\Edges[style={ultra thick, bend left, ->--},label=$2$](0,4)
	\Edges[style={thick}](0,4)
	\Edges[style={thick, bend right}](0,4)
	\node[chips] at (0) {$*$};
	\node[chips] at (1) {$1$};
	\node[chips] at (2) {$2$};
	\node[chips] at (3) {$0$};
	\node[chips] at (4) {\huge{$*$}};
\end{tikzpicture} \ar@<5ex>[r] & \begin{tikzpicture}[scale=0.7]
	\SetFancyGraph
	\Vertex[LabelOut,Lpos=180, Ldist=.15cm,x=0,y=0,L={v_0}]{0}
	\Vertex[LabelOut,Lpos=0, Ldist=.15cm,x=2,y=0,L={v_1}]{1}
	\Vertex[LabelOut,Lpos=0, Ldist=.15cm,x=2,y=2,L={v_2}]{2}
	\Vertex[LabelOut,Lpos=90, Ldist=.15cm,x=1,y=3,L={v_3}]{3}
	\Vertex[LabelOut,Lpos=180, Ldist=.15cm,x=0,y=2,L={v_4}]{4}
	\Edges[style={thick,bend left}](0,1)
	\Edges[style={thick, bend right}](0,1)
	\Edges[style={thick, bend left}](1,2)
	\Edges[style={thick, bend right}](1,2)
	\Edges[style={thick, bend left}](2,3)
	\Edges[style={thick, bend right}](2,3)
	\Edges[style={thick}](2,4)
	\Edges[style={ultra thick, ->--},label=$0$](4,3)
	\tikzstyle{LabelStyle}=[left=3pt,inner sep=0pt,outer sep=3pt]
	\Edges[style={ultra thick, bend left, ->--},label=$2$](0,4)
	\Edges[style={thick}](0,4)
	\Edges[style={thick, bend right}](0,4)
	\node[chips] at (0) {$*$};
	\node[chips] at (1) {$1$};
	\node[chips] at (2) {$2$};
	\node[chips] at (3) {\huge{$*$}};
	\node[chips] at (4) {$*$};
\end{tikzpicture} \ar@<5ex>[r] & \begin{tikzpicture}[scale=0.7]
	\SetFancyGraph
	\Vertex[LabelOut,Lpos=180, Ldist=.15cm,x=0,y=0,L={v_0}]{0}
	\Vertex[LabelOut,Lpos=0, Ldist=.15cm,x=2,y=0,L={v_1}]{1}
	\Vertex[LabelOut,Lpos=0, Ldist=.15cm,x=2,y=2,L={v_2}]{2}
	\Vertex[LabelOut,Lpos=90, Ldist=.15cm,x=1,y=3,L={v_3}]{3}
	\Vertex[LabelOut,Lpos=180, Ldist=.15cm,x=0,y=2,L={v_4}]{4}
	\Edges[style={thick,bend left}](0,1)
	\Edges[style={thick, bend right}](0,1)
	\Edges[style={thick, bend left}](1,2)
	\Edges[style={thick, bend right}](1,2)
	\Edges[style={thick, bend left, dashed}](3,2)
	\Edges[style={thick, bend right}](3,2)
	\Edges[style={thick}](2,4)
	\Edges[style={ultra thick, ->--},label=$0$](4,3)
	\tikzstyle{LabelStyle}=[left=3pt,inner sep=0pt,outer sep=3pt]
	\Edges[style={ultra thick, bend left, ->--},label=$2$](0,4)
	\Edges[style={thick}](0,4)
	\Edges[style={thick, bend right}](0,4)
	\node[chips] at (0) {$*$};
	\node[chips] at (1) {$1$};
	\node[chips] at (2) {$1$};
	\node[chips] at (3) {\huge{$*$}};
	\node[chips] at (4) {$*$};
\end{tikzpicture} \ar[d]  \\
 \begin{tikzpicture}[scale=0.7]
	\SetFancyGraph
	\Vertex[LabelOut,Lpos=180, Ldist=.15cm,x=0,y=0,L={v_0}]{0}
	\Vertex[LabelOut,Lpos=0, Ldist=.15cm,x=2,y=0,L={v_1}]{1}
	\Vertex[LabelOut,Lpos=0, Ldist=.15cm,x=2,y=2,L={v_2}]{2}
	\Vertex[LabelOut,Lpos=90, Ldist=.15cm,x=1,y=3,L={v_3}]{3}
	\Vertex[LabelOut,Lpos=180, Ldist=.15cm,x=0,y=2,L={v_4}]{4}
	\tikzstyle{LabelStyle}=[left=3 pt,inner sep=0pt,outer sep=3pt]
	\Edges[style={ultra thick, bend right, ->--},label=$0$](2,1)
	\Edges[style={thick,bend left}](0,1)
	\Edges[style={thick, bend right}](0,1)
	\Edges[style={thick, bend left, dashed}](2,1)
	\Edges[style={thick, bend left, dashed}](3,2)
	\Edges[style={thick, bend right, dashed}](3,2)
	\tikzstyle{LabelStyle}=[auto=3pt,inner sep=0pt,outer sep=3pt]
	\Edges[style={ultra thick, ->--},label=$0$](4,3)
	\tikzstyle{LabelStyle}=[below =3pt,inner sep=0pt,outer sep=3pt]
	\Edges[style={ultra thick, ->--},label=$0$](4,2)
	\tikzstyle{LabelStyle}=[left=3pt,inner sep=0pt,outer sep=3pt]
	\Edges[style={ultra thick, bend left, ->--},label=$2$](0,4)
	\Edges[style={thick}](0,4)
	\Edges[style={thick, bend right}](0,4)
	\node[chips] at (0) {$*$};
	\node[chips] at (1) {\huge{$*$}};
	\node[chips] at (2) {$*$};
	\node[chips] at (3) {$*$};
	\node[chips] at (4) {$*$};
\end{tikzpicture}  & \ar@<-5ex>[l] \begin{tikzpicture}[scale=0.7]
	\SetFancyGraph
	\Vertex[LabelOut,Lpos=180, Ldist=.15cm,x=0,y=0,L={v_0}]{0}
	\Vertex[LabelOut,Lpos=0, Ldist=.15cm,x=2,y=0,L={v_1}]{1}
	\Vertex[LabelOut,Lpos=0, Ldist=.15cm,x=2,y=2,L={v_2}]{2}
	\Vertex[LabelOut,Lpos=90, Ldist=.15cm,x=1,y=3,L={v_3}]{3}
	\Vertex[LabelOut,Lpos=180, Ldist=.15cm,x=0,y=2,L={v_4}]{4}
	\Edges[style={thick,bend left}](0,1)
	\Edges[style={thick, bend right}](0,1)
	\Edges[style={thick, bend left, dashed}](2,1)
	\Edges[style={thick, bend right}](2,1)
	\Edges[style={thick, bend left, dashed}](3,2)
	\Edges[style={thick, bend right, dashed}](3,2)
	\Edges[style={ultra thick, ->--},label=$0$](4,3)
	\tikzstyle{LabelStyle}=[below=3pt,inner sep=0pt,outer sep=3pt]
	\Edges[style={ultra thick, ->--},label=$0$](4,2)
	\tikzstyle{LabelStyle}=[left=3pt,inner sep=0pt,outer sep=3pt]
	\Edges[style={ultra thick, bend left, ->--},label=$2$](0,4)
	\Edges[style={thick}](0,4)
	\Edges[style={thick, bend right}](0,4)
	\node[chips] at (0) {$*$};
	\node[chips] at (1) {$0$};
	\node[chips] at (2) {\huge{$*$}};
	\node[chips] at (3) {$*$};
	\node[chips] at (4) {$*$};
\end{tikzpicture} &  \ar@<-5ex>[l] \begin{tikzpicture}[scale=0.7]
	\SetFancyGraph
	\Vertex[LabelOut,Lpos=180, Ldist=.15cm,x=0,y=0,L={v_0}]{0}
	\Vertex[LabelOut,Lpos=0, Ldist=.15cm,x=2,y=0,L={v_1}]{1}
	\Vertex[LabelOut,Lpos=0, Ldist=.15cm,x=2,y=2,L={v_2}]{2}
	\Vertex[LabelOut,Lpos=90, Ldist=.15cm,x=1,y=3,L={v_3}]{3}
	\Vertex[LabelOut,Lpos=180, Ldist=.15cm,x=0,y=2,L={v_4}]{4}
	\Edges[style={thick,bend left}](0,1)
	\Edges[style={thick, bend right}](0,1)
	\Edges[style={thick, bend left}](1,2)
	\Edges[style={thick, bend right}](1,2)
	\Edges[style={thick, bend left, dashed}](3,2)
	\Edges[style={thick, bend right, dashed}](3,2)
	\Edges[style={ultra thick, ->--},label=$0$](4,3)
	\tikzstyle{LabelStyle}=[below=3pt,inner sep=0pt,outer sep=3pt]
	\Edges[style={ultra thick, ->--},label=$0$](4,2)
	\tikzstyle{LabelStyle}=[left=3pt,inner sep=0pt,outer sep=3pt]
	\Edges[style={ultra thick, bend left, ->--},label=$2$](0,4)
	\Edges[style={thick}](0,4)
	\Edges[style={thick, bend right}](0,4)
	\node[chips] at (0) {$*$};
	\node[chips] at (1) {$1$};
	\node[chips] at (2) {\huge{$*$}};
	\node[chips] at (3) {$*$};
	\node[chips] at (4) {$*$};
\end{tikzpicture} & \ar@<-5ex>[l] \begin{tikzpicture}[scale=0.7]
	\SetFancyGraph
	\Vertex[LabelOut,Lpos=180, Ldist=.15cm,x=0,y=0,L={v_0}]{0}
	\Vertex[LabelOut,Lpos=0, Ldist=.15cm,x=2,y=0,L={v_1}]{1}
	\Vertex[LabelOut,Lpos=0, Ldist=.15cm,x=2,y=2,L={v_2}]{2}
	\Vertex[LabelOut,Lpos=90, Ldist=.15cm,x=1,y=3,L={v_3}]{3}
	\Vertex[LabelOut,Lpos=180, Ldist=.15cm,x=0,y=2,L={v_4}]{4}
	\Edges[style={thick,bend left}](0,1)
	\Edges[style={thick, bend right}](0,1)
	\Edges[style={thick, bend left}](1,2)
	\Edges[style={thick, bend right}](1,2)
	\Edges[style={thick, bend left, dashed}](3,2)
	\Edges[style={thick, bend right, dashed}](3,2)
	\Edges[style={thick}](2,4)
	\Edges[style={ultra thick, ->--},label=$0$](4,3)
	\tikzstyle{LabelStyle}=[left=3pt,inner sep=0pt,outer sep=3pt]
	\Edges[style={ultra thick, bend left, ->--},label=$2$](0,4)
	\Edges[style={thick}](0,4)
	\Edges[style={thick, bend right}](0,4)
	\node[chips] at (0) {$*$};
	\node[chips] at (1) {$1$};
	\node[chips] at (2) {$0$};
	\node[chips] at (3) {$*$};
	\node[chips] at (4) {\huge{$*$}};
\end{tikzpicture} } }
\caption{Example~\ref{ex:multidfs}: a run of the multigraph DFS-burning algorithm. The algorithm starts in the upper-left and ends in the lower-left. Edges of the graph are drawn with multiplicity for clarity. At each step, the burning vertices are marked by an asterisk and the non-burning vertices are marked by the number of chips remaining on them. The head of the depth-first search has a larger asterisk. The edges in the tree spanning the burning vertices are bold and directed away from the root; their labels are displayed beside them. The burnt edges are dashed.} \label{fig:multidfs}
 \end{figure}
 
 \begin{example} \label{ex:multidfs}
Let $G$ be the multigraph below, where for clarity we have labeled each edge $e \in E(G)$ by $\omega_G(e)$ and each vertex $i \in V(G)$ by $v_i$:
\begin{center}
\begin{tikzpicture}[scale=0.55]
	\SetFancyGraph
	\Vertex[LabelOut,Lpos=180, Ldist=0cm,x=0,y=0.5,L={v_0}]{0}
	\Vertex[LabelOut,Lpos=0, Ldist=0cm,x=2,y=0.5,L={v_1}]{1}
	\Vertex[LabelOut,Lpos=0, Ldist=0cm,x=2,y=2,L={v_2}]{2}
	\Vertex[LabelOut,Lpos=90, Ldist=0cm,x=1,y=3,L={v_3}]{3}
	\Vertex[LabelOut,Lpos=180, Ldist=0cm,x=0,y=2,L={v_4}]{4}
	\Edges[style={thick},label={2}](1,0)
	\Edges[style={thick},label={2}](2,1)
	\Edges[style={thick},label={2}](3,2)
	\Edges[style={thick},label={1}](2,4)
	\Edges[style={thick},label={1}](4,3)
	\Edges[style={thick},label={3}](0,4)
\end{tikzpicture}
\end{center}
Let $\alpha := (1,2,0,0) \in \mathrm{PF}(G)$ and let $\prec$ be given by $ 1 \prec 2 \prec 3 \prec 4$. Figure~\ref{fig:multidfs} depicts a run on the multigraph DFS-burning algorithm on $\alpha$ with respect to the vertex order~$\prec$. The result of the DFS-burning algorithm is that $(T,\ell) := \varphi^{\prec}(\alpha)$ is
\begin{center}
\begin{tikzpicture}[scale=0.55]
	\SetFancyGraph
	\Vertex[LabelOut,Lpos=90, Ldist=0cm,x=0,y=0,L={v_0}]{0}
	\Vertex[LabelOut,Lpos=180, Ldist=0cm,x=-1,y=-3,L={v_1}]{1}
	\Vertex[LabelOut,Lpos=180, Ldist=0cm,x=-1,y=-2,L={v_2}]{2}
	\Vertex[LabelOut,Lpos=0, Ldist=0cm,x=1,y=-2,L={v_3}]{3}
	\Vertex[LabelOut,Lpos=270, Ldist=0cm,x=0,y=-1,L={v_4}]{4}
	\Edges[style={thick},label=$2$](0,4)
	\Edges[style={thick},label=$0$](2,4)
	\Edges[style={thick},label=$0$](4,3)
	\Edges[style={thick},label=$0$](2,1)
	\node at (0,-3.5) {$T$};
\end{tikzpicture}
\end{center}
where now we label each $e \in E(T)$ by $\ell(e)$. We can verify that $\mathrm{rsum}(\alpha)$ is 
\[ 4 = \omega_G(\{0,2\}) + \omega_G(\{0,3\}) + \omega_G(\{4,1\}) + \omega_G(\{0,1\}) + 2 = \kappa(G,T,\prec) + \hspace{-0.2cm} \sum_{e \in E(T)} \hspace{-0.2cm} \ell(e).\]
Observe also that $4$ is the number of surviving edges, that is, the number of solid edges not in $T$ in the lower-left of Figure~\ref{fig:multidfs}.
\end{example}

\begin{thm} \label{thm:multidfs}
Let $(T,\ell)$ be the output of an application of Algorithm~\ref{alg:multidfs} to $\alpha \in \mathbb{N}^{n-1}$ with respect to any total order $\prec$ on $\widetilde{V}(G)$. If~$V(T) = V(G)$ then $\alpha \in \mathrm{PF}(G)$. And if~$V(T) \neq V(G)$, then the set $U := V(G) - V(T)$ certifies that $\alpha$ is not a $G$-parking function in the sense that for all $j \in U$ we have~$a_j \geq d^{G}_U(j)$.

For a fixed total order $\prec$ on $\widetilde{V}(G)$, associating to each~$\alpha \in \mathrm{PF}(G)$ the edge-labeled spanning tree $(T,\ell)$ produced by Algorithm~\ref{alg:multidfs} on input $\alpha$ yields a bijection
\[ \varphi^{\prec} \colon \mathrm{PF}(G) \to \left\{ (T,\ell)\colon \parbox{3in}{\begin{center} $T \in \mathrm{SPT}(G), \; \ell\colon E(T) \to \mathbb{N},$ \\ $\ell(e) \in \{0,1,\ldots,\omega_G(e)-1\} \textrm{ for all } e \in E(T)$ \end{center} } \right\}.\]
This bijection satisfies $\mathrm{rsum}(\alpha) = \kappa(G,T,\prec) + \sum_{e \in E(T)}\ell(e)$ when $(T,\ell) = \varphi^{\prec}(\alpha)$. The inverse of the bijection is given by Algorithm~\ref{alg:multidfsinv}.
\end{thm}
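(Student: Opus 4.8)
The plan is to treat Algorithm~\ref{alg:multidfs} as a depth-first-search realization of Dhar's burning algorithm and to prove four things in turn: (i) the output tree $T$ spans $G$ exactly when $\alpha \in \mathrm{PF}(G)$, and otherwise the unburnt set $U := V(G) - V(T)$ is the claimed certificate; (ii) the resulting pair $(T,\ell)$ lies in the target set; (iii) Algorithm~\ref{alg:multidfsinv} inverts Algorithm~\ref{alg:multidfs}; and (iv) the reversed sum records the number of surviving edges, which equals $\kappa(G,T,\prec) + \sum_{e \in E(T)}\ell(e)$. Throughout I exploit that both algorithms share exactly the same control flow --- the same nested loops, the same $\prec$-ordered sweeps, the same backtracking via $\mathrm{par}_T$ --- and differ only in the rule used to decide, at each examined edge $e = \{\texttt{head},j\}$, whether to \emph{burn} $e$ (a chip operation) or to \emph{ignite} $j$ (a tree step).

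For (i) I argue as in the classical correctness proof of the burning algorithm. The edges added to $T$ always form a rooted tree growing from $0$, so $T$ is spanning precisely when every vertex burns. If $\alpha \notin \mathrm{PF}(G)$, pick a witnessing set $\varnothing \neq U \subseteq \widetilde{V}(G)$ with $\alpha_j \geq \mathrm{deg}^G_U(j)$ for all $j \in U$; an induction on the run shows the burning set stays inside $V(G) - U$, since while the burning set $B \subseteq V(G)-U$ the number of burnt edges from any $j \in U$ to $B$ is at most $\mathrm{deg}^G_U(j) \leq \alpha_j$, so $j$ can never be ignited. Conversely, suppose the algorithm halts with $U = V(G) - V(T) \neq \varnothing$. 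The last time $\texttt{head} = h$ for each $h \in V(T)$, its sweep must complete without an ignition (otherwise the search would descend and $h$ would not yet be abandoned), so every edge from $h$ to a then-non-burning vertex is burnt during that sweep, and these conditions persist to termination. Hence at termination every edge between $V(T)$ and any $j \in U$ has been burnt, and since no such burn ever ignited $j$ each corresponds to a removed chip; therefore $\alpha_j \geq \mathrm{deg}^G_U(j)$ for all $j \in U$, which is exactly the certificate (and shows no such $U$ can occur when $\alpha \in \mathrm{PF}(G)$). Point (ii) is then immediate: each label $\ell(e)$ counts the parallel copies of the igniting edge $e$ still unburnt at ignition, excluding $e$ itself, so $0 \leq \ell(e) \leq \omega_G(e) - 1$.

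The heart of the proof is (iii), which I establish by a lock-step induction showing that, started on $\alpha \in \mathrm{PF}(G)$ and on $(T,\ell) = \varphi^{\prec}(\alpha)$ respectively, the two algorithms trace an identical sequence of configurations $(\texttt{head},\texttt{burning\_vertices},\texttt{burnt\_edges})$, while the chips \emph{added} by Algorithm~\ref{alg:multidfsinv} to each vertex $m$ always equal the chips \emph{removed} so far from $m$ by Algorithm~\ref{alg:multidfs}. Granting the shared trace, the inverse reconstructs $\alpha$ exactly. The inductive step reduces to checking that, from the common state, the two decision rules agree at the examined edge $e = \{h,j\}$: Algorithm~\ref{alg:multidfs} ignites iff the current chip count $\alpha_j$ has reached $0$, whereas Algorithm~\ref{alg:multidfsinv} ignites iff $e \in E(T)$ and the running count of still-unburnt parallel copies equals $\ell(e)$. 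The main obstacle, and the only genuinely new point beyond~\cite{perkinson2013gparking}, is the multigraph wrinkle: since parallel edges are indistinguishable as pairs, the predicate $e \in E(T)$ is satisfied by \emph{every} copy of a tree pair $\{\mathrm{par}_T(j),j\}$, so it is precisely the secondary test against $\ell(e)$ --- together with the fact that $\ell(e)$ was defined using the very parallel count present at the ignition instant --- that selects the correct copy. Matching the pre-ignition chip-removals of the forward run with the pre-ignition chip-additions of the inverse run (both equal to $\omega_G(e) - 1 - \ell(e)$ for the tree pair, and to the full multiplicity for non-tree pairs) closes the induction; running the same argument in the opposite direction shows Algorithm~\ref{alg:multidfs} inverts Algorithm~\ref{alg:multidfsinv}, so $\varphi^{\prec}$ is a bijection.

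Finally, for (iv) I use conservation of edges: the $\#E(G)$ edges of $G$ split into burnt edges, tree edges, and surviving edges, where the burnt edges number $\mathrm{sum}(\alpha)$ (one per removed chip) and the tree edges number $n$; hence the surviving edges number $\#E(G) - n - \mathrm{sum}(\alpha) = \mathrm{rsum}(\alpha)$. It remains to enumerate surviving edges. Those parallel to a tree edge $e$ are exactly the $\ell(e)$ copies left unburnt at ignition (and never revisited, as the loops only examine edges to non-burning vertices), contributing $\sum_{e \in E(T)}\ell(e)$. The remaining survivors are the non-tree edges $\{\mathrm{par}_T(i),j\}$ with $i \leq_T j$ and $j \prec i$: exactly as in~\cite{perkinson2013gparking}, such an edge is never burnt because $j$ already lies in the burning set (having been ignited inside the subtree of $i$) by the time the search, having explored that subtree, would reconsider $j$ from $\mathrm{par}_T(i)$ --- and this happens precisely when $j \prec i$. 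Summing their multiplicities $\omega_G(\{\mathrm{par}_T(i),j\})$ over all such inversions yields $\kappa(G,T,\prec)$, and adding the two contributions gives $\mathrm{rsum}(\alpha) = \kappa(G,T,\prec) + \sum_{e \in E(T)}\ell(e)$, as required.
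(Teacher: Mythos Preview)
Your proposal is correct and follows exactly the approach the paper intends: the paper's own proof consists of a single sentence deferring to~\cite{perkinson2013gparking}, and you have supplied precisely the argument one finds there, adapted to multigraphs via the edge-labeling $\ell$ in the way the paper's surrounding discussion indicates. The only novelty beyond the simple-graph case---that parallel copies of a tree edge survive and are accounted for by $\ell(e)$ rather than contributing to $\kappa$---is exactly what you flag in your treatment of (iii) and (iv).
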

\begin{proof}
The proof of the correctness for this algorithm is so similar to that of the original algorithm in~\cite{perkinson2013gparking} that we will not repeat it here.
\end{proof}

\begin{remark} \label{rem:yan}
When $\mathbf{x} = (a,\overbrace{b,b,\ldots,b}^{n-1})$ we have $\mathrm{PF}(\mathbf{x}) = \mathrm{PF}(K_{n+1}^{a,b})$ by Theorem~\ref{thm:gpfandxpf}. Thus Theorem~\ref{thm:gpfrsum} recovers a result of Yan~\cite{yan2001generalized} expressing the reversed sum enumerator for these $\mathbf{x}$-parking functions in terms of inversions in ``rooted $b$-forests.''
\end{remark}

\section{The vector DFS-burning algorithm} \label{sec:vecdfs}

In this section we prove our main result, Theorem~\ref{thm:main}. Fix $\mathbf{x} = (x_1,\ldots,x_n) \in \mathbb{N}^n$. Assume~$x_1 \geq 1$ as otherwise Theorem~\ref{thm:main} holds trivially since~$\mathrm{PF}(\mathbf{x}) = \varnothing$. Define a connected multigraph $G_{\mathbf{x}}$ associated to $\mathbf{x}$: its vertex set is $V(G_{\mathbf{x}}) := \{0,1,\ldots,n\}$ and its edge-weight function is given by~$\omega_{G_{\mathbf{x}}}(\{i,j\}) := x_{\mathrm{min}(i,j)+1}$ for all $0\leq i,j \leq n$. As always, $0$ is the root of $G_{\mathbf{x}}$. Note that~$\#E(G_{\mathbf{x}}) - n =  \left( \sum_{i=1}^{n} (n+1-i) x_{i} \right) - n $ so the notions of reversed sum for $G_{\mathbf{x}}$-parking functions and $\mathbf{x}$-parking functions agree. Theorem~\ref{thm:main} will follow immediately from the existence of a bijection 
\[ \psi \colon \mathrm{PF}(\mathbf{x}) \to \left\{ (T,\ell,\prec)\colon \parbox{3in}{\begin{center} $T \in \mathrm{RPT}(n+1), \; \prec \in \mathrm{AVO}(T), \; \ell\colon E(T) \to \mathbb{N},$ \\ $\ell(e) \in \{0,1,\ldots,\omega_{G_{\mathbf{x}}}(e)-1\} \textrm{ for all } e \in E(T)$ \end{center} } \right\}\]
satisfying $\mathrm{rsum}(\alpha) = \kappa(G_{\mathbf{x}},T,\prec) + \sum_{e \in E(T)}\ell(e)$ when $(T,\ell,\prec) = \psi(\alpha)$. We establish the existence of such a bijection in Theorem~\ref{thm:vecdfs} below. This bijection is a far more substantial variation of the DFS-burning algorithm than the extension to multigraphs in the last section. We want to ``symmetrize'' the DFS-burning algorithm. The trick is that we will build up our graph $G_{\mathbf{x}}$ as we burn through it. We start with a graph on~$\{0,1,\ldots,n\}$ which has no edges. Then~$0$ starts to burn. When a vertex $j$ becomes the $i$th vertex to start burning we add $x_{i}$ edges between $j$ and all vertices that are not yet burning. An auxiliary permutation $\sigma$ in the algorithm records the order in which vertices started to burn. The permutation~$\sigma$ determines the admissible vertex order~$\prec$ we return together with the rooted plane tree $T$ and its edge-labeling function~$\ell$. When we terminate the algorithm the graph we built up will always be a copy of $G_{\mathbf{x}}$ once we relabel the vertices by~$i \mapsto \sigma^{-1}(i)$. The point is that this vector DFS-burning algorithm simulates a run of the multigraph DFS-burning algorithm on this relabeled multigraph~$G_{\mathbf{x}}$ with respect to the order~$\prec$ we obtain as output. Therefore, when analyzing this new procedure we can invoke all the nice results about the behavior of the multigraph DFS-burning algorithm detailed in Theorem~\ref{thm:multidfs}: that it respects the reversed sum statistic, that it is invertible, et cetera. Algorithm~\ref{alg:vecdfs} gives pseudocode for the vector DFS-burning algorithm. Algorithm~\ref{alg:vecdfsinv} gives pseudocode for the inverse of this algorithm, which just applies the multigraph DFS-burning inverse algorithm.

{\small
\begin{algorithm}
\caption{ {\bf Vector DFS-burning algorithm}.} \label{alg:vecdfs}
\begin{algorithmic}[1]
  \Statex
  \Statex{\hspace{-0.5cm}\sc algorithm (with respect to $\mathbf{x} = (x_1,\ldots,x_{n}) \in \mathbb{N}^n$)}
  \Statex{\bf Input:} $\alpha = (\alpha_1,\ldots,\alpha_n) \in \mathbb{N}^n$
  \State $G := \textrm{multigraph with $V(G) := \{0,1,\ldots,n\}$ and $E(G) := \varnothing$}$
  \State $\texttt{burning\_vertices} := \{0\}$
  \State $\texttt{burnt\_edges} := \varnothing$
  \State $\texttt{counter} := 0$
  \State $\sigma := \textrm{ unique map $\{0\} \to \{0\}$}$  
  \ForAll {$k \in V(G) - \texttt{burning\_vertices}$}:
  	\State Add $x_1$ copies of the edge $\{0,k\}$ to $E(G)$
  \EndFor
  \State $T := \textrm{ multigraph with $V(T) := \{0\}$ and $E(T) := \varnothing$}$
  \State $\ell := \textrm{ unique map $E(T) \to \mathbb{N}$}$
  \State $\texttt{head} := 0$
  \ForAll{$j \in V(G) - \texttt{burning\_vertices}$ in order from max to min $j$} \label{line:loop3}
  	\ForAll{$e = \{\texttt{head},j\} \in E(G) - \texttt{burnt\_edges}$}
		\If{$\alpha_j \geq 1$}
			\State $\texttt{burnt\_edges} := \texttt{burnt\_edges} \cup \{e\}$
			\State $\alpha_j := \alpha_j - 1$
		\Else
			\State $\texttt{burning\_vertices} := \texttt{burning\_vertices} \cup \{j\}$
			\State $\texttt{counter} := \texttt{counter} + 1$
			\State $\sigma(\texttt{counter}) := j$
			\ForAll {$k \in V(G) - \texttt{burning\_vertices}$}:
				 \State Add $x_{\texttt{counter}+1}$ copies of the edge $\{j,k\}$ to $E(G)$
			\EndFor
			\State $V(T) := V(T) \cup \{j\}$
			\State $E(T) := E(T) \cup \{\texttt{head},j\}$
			\State $\ell(\{\texttt{head},j\}) := \#\{f \in E(G) - (\texttt{burnt\_edges} \cup \{e\})\colon f = \{\texttt{head},j\}\}$
  			\State $\texttt{head} := j$
			\State \textbf{break} out of current \textbf{for} loops and \Goto{line:loop3}
		\EndIf
	\EndFor
  \EndFor
  \If{$\texttt{head} \neq 0$}
  	\State $\texttt{head} := \mathrm{par}_T(\texttt{head})$
	\State \Goto{line:loop3}
  \EndIf
  \State relabel the vertices of $T$ by $i \mapsto \sigma^{-1}(i)$ for all $i \in V(T)$
  \State $\prec \; := \textrm{ the unique total order on $\widetilde{V}(T)$ with $i \prec j$ if and only if $\sigma(i) < \sigma(j)$}$
  \Statex{\bf Output:} {$(T,\ell,\prec)$}
  \end{algorithmic}
\end{algorithm}
}

{\small
\begin{algorithm}
\caption{ {\bf Vector DFS-burning inverse algorithm}.} \label{alg:vecdfsinv}
\begin{algorithmic}[1]
  \Statex
  \Statex{\hspace{-0.5cm}\sc algorithm (with respect to $\mathbf{x} = (x_1,\ldots,x_{n}) \in \mathbb{N}^n$)}
  \Statex{\bf Input:} $(T,\ell,\prec)$ with $T \in \mathrm{RPT}(n+1)$, $\ell\colon E(T) \to \mathbb{N}$, and $\prec \in \mathrm{AVO}(T)$
  \State $\beta = (\beta_1,\ldots,\beta_n) := \textrm{ result of applying Algorithm~\ref{alg:multidfsinv} to $(T,\ell)$ with respect to $G_{\mathbf{x}}$ and $\prec$}$
  \State $\sigma := \textrm{ the unique permutation in $\mathfrak{S}_n$ with $\sigma(1) \prec \sigma(2) \prec \cdots \prec \sigma(n)$}$
  \State $\alpha := (\beta_{\sigma(1)},\beta_{\sigma(2)},\ldots,\beta_{\sigma(n)})$
  \Statex{\bf Output:} {$\alpha$}
  \end{algorithmic}
\end{algorithm}
}

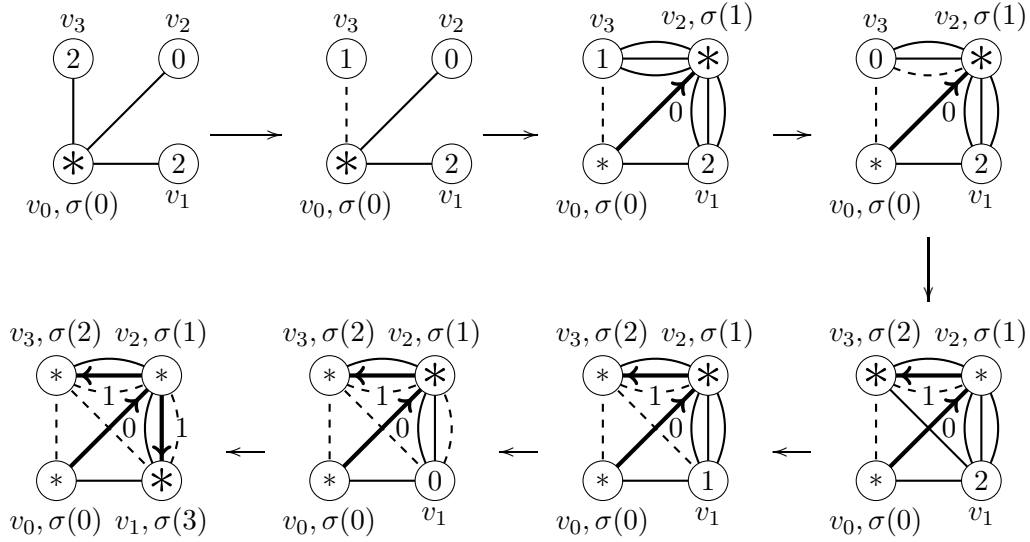
\begin{figure}
\centerline{
\xymatrix@C=3ex{
 \begin{tikzpicture}[scale=0.7]
	\SetFancyGraph
	\Vertex[LabelOut,Lpos=270, Ldist=.15cm,x=0,y=0,L={v_0,\sigma(0)}]{0}
	\Vertex[LabelOut,Lpos=270, Ldist=.15cm,x=2,y=0,L={v_1}]{1}
	\Vertex[LabelOut,Lpos=90, Ldist=.15cm,x=2,y=2,L={v_2}]{2}
	\Vertex[LabelOut,Lpos=90, Ldist=.15cm,x=0,y=2,L={v_3}]{3}
	\Edges[style={thick}](0,1)
	\Edges[style={thick}](0,2)
	\Edges[style={thick}](0,3)
	\node[chips] at (0) {\huge{$*$}};
	\node[chips] at (1) {$2$};
	\node[chips] at (2) {$0$};
	\node[chips] at (3) {$2$};
\end{tikzpicture} \ar@<7ex>[r] &  \begin{tikzpicture}[scale=0.7]
	\SetFancyGraph
	\Vertex[LabelOut,Lpos=270, Ldist=.15cm,x=0,y=0,L={v_0,\sigma(0)}]{0}
	\Vertex[LabelOut,Lpos=270, Ldist=.15cm,x=2,y=0,L={v_1}]{1}
	\Vertex[LabelOut,Lpos=90, Ldist=.15cm,x=2,y=2,L={v_2}]{2}
	\Vertex[LabelOut,Lpos=90, Ldist=.15cm,x=0,y=2,L={v_3}]{3}
	\Edges[style={thick}](0,1)
	\Edges[style={thick}](0,2)
	\Edges[style={thick,dashed}](0,3)
	\node[chips] at (0) {\huge{$*$}};
	\node[chips] at (1) {$2$};
	\node[chips] at (2) {$0$};
	\node[chips] at (3) {$1$};
\end{tikzpicture} \ar@<7ex>[r] & \begin{tikzpicture}[scale=0.7]
	\SetFancyGraph
	\Vertex[LabelOut,Lpos=270, Ldist=.15cm,x=0,y=0,L={v_0,\sigma(0)}]{0}
	\Vertex[LabelOut,Lpos=270, Ldist=.15cm,x=2,y=0,L={v_1}]{1}
	\Vertex[LabelOut,Lpos=90, Ldist=.15cm,x=2,y=2,L={v_2,\sigma(1)}]{2}
	\Vertex[LabelOut,Lpos=90, Ldist=.15cm,x=0,y=2,L={v_3}]{3}
	\Edges[style={thick}](0,1)
	\tikzstyle{LabelStyle}=[right=2pt,inner sep=0pt,outer sep=3pt]
	\Edges[style={ultra thick, ->-},label=$0$](0,2)
	\Edges[style={thick,dashed}](0,3)
	\Edges[style={thick, bend left=30}](2,1)
	\Edges[style={thick}](2,1)
	\Edges[style={thick, bend right=30}](2,1)
	\Edges[style={thick, bend left=30}](2,3)
	\Edges[style={thick}](2,3)
	\Edges[style={thick, bend right=30}](2,3)
	\node[chips] at (0) {$*$};
	\node[chips] at (1) {$2$};
	\node[chips] at (2) {\huge{$*$}};
	\node[chips] at (3) {$1$};
\end{tikzpicture} \ar@<7ex>[r] & \begin{tikzpicture}[scale=0.7]
	\SetFancyGraph
	\Vertex[LabelOut,Lpos=270, Ldist=.15cm,x=0,y=0,L={v_0,\sigma(0)}]{0}
	\Vertex[LabelOut,Lpos=270, Ldist=.15cm,x=2,y=0,L={v_1}]{1}
	\Vertex[LabelOut,Lpos=90, Ldist=.15cm,x=2,y=2,L={v_2,\sigma(1)}]{2}
	\Vertex[LabelOut,Lpos=90, Ldist=.15cm,x=0,y=2,L={v_3}]{3}
	\Edges[style={thick}](0,1)
	\tikzstyle{LabelStyle}=[right=2pt,inner sep=0pt,outer sep=3pt]
	\Edges[style={ultra thick, ->-},label=$0$](0,2)
	\Edges[style={thick,dashed}](0,3)
	\Edges[style={thick, bend left=30}](2,1)
	\Edges[style={thick}](2,1)
	\Edges[style={thick, bend right=30}](2,1)
	\Edges[style={thick, bend left=30, dashed}](2,3)
	\Edges[style={thick}](2,3)
	\Edges[style={thick, bend right=30}](2,3)
	\node[chips] at (0) {$*$};
	\node[chips] at (1) {$2$};
	\node[chips] at (2) {\huge{$*$}};
	\node[chips] at (3) {$0$};
\end{tikzpicture} \ar[d] \\
\begin{tikzpicture}[scale=0.7]
	\SetFancyGraph
	\Vertex[LabelOut,Lpos=270, Ldist=.15cm,x=0,y=0,L={v_0,\sigma(0)}]{0}
	\Vertex[LabelOut,Lpos=270, Ldist=.15cm,x=2,y=0,L={v_1,\sigma(3)}]{1}
	\Vertex[LabelOut,Lpos=90, Ldist=.15cm,x=2,y=2,L={v_2,\sigma(1)}]{2}
	\Vertex[LabelOut,Lpos=90, Ldist=.15cm,x=0,y=2,L={v_3,\sigma(2)}]{3}
	\Edges[style={thick}](0,1)
	\tikzstyle{LabelStyle}=[right=2pt,inner sep=0pt,outer sep=3pt]
	\Edges[style={ultra thick, ->-},label=$0$](0,2)
	\Edges[style={thick,dashed}](0,3)
	\Edges[style={thick, bend left=30,dashed}](2,1)
	\tikzstyle{LabelStyle}=[right=2pt,inner sep=0pt,outer sep=3pt]
	\Edges[style={ultra thick, ->-},label=$1$](2,1)
	\Edges[style={thick, bend right=30}](2,1)
	\Edges[style={thick, bend left=30, dashed}](2,3)
	\tikzstyle{LabelStyle}=[below=1pt,inner sep=0pt,outer sep=3pt]
	\Edges[style={ultra thick, ->-},label=$1$](2,3)
	\Edges[style={thick, bend right=30}](2,3)
	\Edges[style={thick,dashed}](3,1)
	\node[chips] at (0) {$*$};
	\node[chips] at (1) {\huge{$*$}};
	\node[chips] at (2) {$*$};
	\node[chips] at (3) {$*$};
\end{tikzpicture} & \ar@<-7ex>[l] \begin{tikzpicture}[scale=0.7]
	\SetFancyGraph
	\Vertex[LabelOut,Lpos=270, Ldist=.15cm,x=0,y=0,L={v_0,\sigma(0)}]{0}
	\Vertex[LabelOut,Lpos=270, Ldist=.15cm,x=2,y=0,L={v_1}]{1}
	\Vertex[LabelOut,Lpos=90, Ldist=.15cm,x=2,y=2,L={v_2,\sigma(1)}]{2}
	\Vertex[LabelOut,Lpos=90, Ldist=.15cm,x=0,y=2,L={v_3,\sigma(2)}]{3}
	\Edges[style={thick}](0,1)
	\tikzstyle{LabelStyle}=[right=2pt,inner sep=0pt,outer sep=3pt]
	\Edges[style={ultra thick, ->-},label=$0$](0,2)
	\Edges[style={thick,dashed}](0,3)
	\Edges[style={thick, bend left=30,dashed}](2,1)
	\Edges[style={thick}](2,1)
	\Edges[style={thick, bend right=30}](2,1)
	\Edges[style={thick, bend left=30, dashed}](2,3)
	\tikzstyle{LabelStyle}=[below=1pt,inner sep=0pt,outer sep=3pt]
	\Edges[style={ultra thick, ->-},label=$1$](2,3)
	\Edges[style={thick, bend right=30}](2,3)
	\Edges[style={thick,dashed}](3,1)
	\node[chips] at (0) {$*$};
	\node[chips] at (1) {$0$};
	\node[chips] at (2) {\huge{$*$}};
	\node[chips] at (3) {$*$};
\end{tikzpicture} & \ar@<-7ex>[l] \begin{tikzpicture}[scale=0.7]
	\SetFancyGraph
	\Vertex[LabelOut,Lpos=270, Ldist=.15cm,x=0,y=0,L={v_0,\sigma(0)}]{0}
	\Vertex[LabelOut,Lpos=270, Ldist=.15cm,x=2,y=0,L={v_1}]{1}
	\Vertex[LabelOut,Lpos=90, Ldist=.15cm,x=2,y=2,L={v_2,\sigma(1)}]{2}
	\Vertex[LabelOut,Lpos=90, Ldist=.15cm,x=0,y=2,L={v_3,\sigma(2)}]{3}
	\Edges[style={thick}](0,1)
	\tikzstyle{LabelStyle}=[right=2pt,inner sep=0pt,outer sep=3pt]
	\Edges[style={ultra thick, ->-},label=$0$](0,2)
	\Edges[style={thick,dashed}](0,3)
	\Edges[style={thick, bend left=30}](2,1)
	\Edges[style={thick}](2,1)
	\Edges[style={thick, bend right=30}](2,1)
	\Edges[style={thick, bend left=30, dashed}](2,3)
	\tikzstyle{LabelStyle}=[below=1pt,inner sep=0pt,outer sep=3pt]
	\Edges[style={ultra thick, ->-},label=$1$](2,3)
	\Edges[style={thick, bend right=30}](2,3)
	\Edges[style={thick,dashed}](3,1)
	\node[chips] at (0) {$*$};
	\node[chips] at (1) {$1$};
	\node[chips] at (2) {\huge{$*$}};
	\node[chips] at (3) {$*$};
\end{tikzpicture} & \ar@<-7ex>[l] \begin{tikzpicture}[scale=0.7]
	\SetFancyGraph
	\Vertex[LabelOut,Lpos=270, Ldist=.15cm,x=0,y=0,L={v_0,\sigma(0)}]{0}
	\Vertex[LabelOut,Lpos=270, Ldist=.15cm,x=2,y=0,L={v_1}]{1}
	\Vertex[LabelOut,Lpos=90, Ldist=.15cm,x=2,y=2,L={v_2,\sigma(1)}]{2}
	\Vertex[LabelOut,Lpos=90, Ldist=.15cm,x=0,y=2,L={v_3,\sigma(2)}]{3}
	\Edges[style={thick}](0,1)
	\tikzstyle{LabelStyle}=[right=2pt,inner sep=0pt,outer sep=3pt]
	\Edges[style={ultra thick, ->-},label=$0$](0,2)
	\Edges[style={thick,dashed}](0,3)
	\Edges[style={thick, bend left=30}](2,1)
	\Edges[style={thick}](2,1)
	\Edges[style={thick, bend right=30}](2,1)
	\Edges[style={thick, bend left=30, dashed}](2,3)
	\tikzstyle{LabelStyle}=[below=1pt,inner sep=0pt,outer sep=3pt]
	\Edges[style={ultra thick, ->-},label=$1$](2,3)
	\Edges[style={thick, bend right=30}](2,3)
	\Edges[style={thick}](3,1)
	\node[chips] at (0) {$*$};
	\node[chips] at (1) {$2$};
	\node[chips] at (2) {$*$};
	\node[chips] at (3) {\huge{$*$}};
\end{tikzpicture} } }

\caption{Example~\ref{ex:vecdfs}: a run of the vector DFS-burning algorithm. The algorithm starts in the upper-left and ends in the lower-left.} \label{fig:vecdfs}
\end{figure}

\medskip

 \begin{example} \label{ex:vecdfs}
 Set $\mathbf{x} := (1,3,1)$ and take $\alpha := (2,0,2) \in \mathrm{PF}(\mathbf{x})$. Figure~\ref{fig:vecdfs} depicts a run of the vector DFS-burning algorithm on $\alpha$. As in Example~\ref{ex:multidfs}, for clarity we label vertex $i \in V(G)$ by $v_i$. But note that the output tree $T$ will have its vertices relabeled by~$i \mapsto \sigma^{-1}(i)$ with respect to the depicted~$v_i$ labeling in Figure~\ref{fig:vecdfs}. Specifically, the result of the DFS-burning algorithm is that $(T,\ell,\prec) := \psi(\alpha)$ is
\begin{center}
\begin{tikzpicture}[scale=0.6]
	\SetFancyGraph
	\Vertex[LabelOut,Lpos=90, Ldist=0cm,x=0,y=0,L={w_0}]{0}
	\Vertex[LabelOut,Lpos=270, Ldist=.15cm,x=0,y=-1,L={w_1}]{1}
	\Vertex[LabelOut,Lpos=180, Ldist=0cm,x=-1,y=-2,L={w_2}]{2}
	\Vertex[LabelOut,Lpos=0, Ldist=0cm,x=1,y=-2,L={w_3}]{3}
	\Edges[style={thick},label=$0$](0,1)
	\Edges[style={thick},label=$1$](2,1)
	\Edges[style={thick},label=$1$](1,3)
	\node at (0,-2.5) {$T$};
\end{tikzpicture}
\end{center}
where we label each $i \in V(T)$ by $w_i$, each $e \in E(T)$ by $\ell(e)$, and $3 \prec 1 \prec 2$. We can verify that~$\mathrm{rsum}(\alpha) = 3 = \omega_{G_{\mathbf{x}}}(\{0,3\}) + 2 = \kappa(G,T,\prec) + \sum_{e \in E(T)} \ell(e)$.
\end{example}

\begin{thm} \label{thm:vecdfs}
Associating to each $\mathbf{x}$-parking function $\alpha \in \mathrm{PF}(\mathbf{x})$ the triple $(T,\ell,\prec)$ produced by Algorithm~\ref{alg:vecdfs} on input $\alpha$ defines a bijection
\[ \psi \colon \mathrm{PF}(\mathbf{x}) \to \left\{ (T,\ell,\prec)\colon \parbox{3in}{\begin{center} $T \in \mathrm{RPT}(n+1), \; \prec \in \mathrm{AVO}(T), \; \ell\colon E(T) \to \mathbb{N},$ \\ $\ell(e) \in \{0,1,\ldots,\omega_{G_{\mathbf{x}}}(e)-1\} \textrm{ for all } e \in E(T)$ \end{center} } \right\}\]
This bijection satisfies $\mathrm{rsum}(\alpha) = \kappa(G_{\mathbf{x}},T,\prec) + \sum_{e \in E(T)}\ell(e)$ when $(T,\ell,\prec) = \psi(\alpha)$. The inverse of the bijection is given by Algorithm~\ref{alg:vecdfsinv}.
\end{thm}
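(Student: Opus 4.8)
The plan is to reduce everything to the multigraph result, Theorem~\ref{thm:multidfs}, by making precise the assertion that Algorithm~\ref{alg:vecdfs} merely simulates a run of the multigraph DFS-burning algorithm (Algorithm~\ref{alg:multidfs}). Fix $\alpha \in \mathbb{N}^n$ and run Algorithm~\ref{alg:vecdfs}, obtaining the permutation $\sigma$, the order $\prec$, and the pair $(T,\ell)$. First I would verify two bookkeeping claims. (i) When a vertex becomes the $s$-th to burn it adds $x_{s+1}$ edges to every not-yet-burning vertex; hence the multigraph built up satisfies $\omega_G(\{\sigma(s),\sigma(t)\}) = x_{\min(s,t)+1}$, so after relabeling each vertex $v$ by $\sigma^{-1}(v)$ it is exactly $G_{\mathbf{x}}$. (ii) Since $\prec$ is defined by $i \prec j \iff \sigma(i) < \sigma(j)$, the native ``max to min $j$'' rule of Algorithm~\ref{alg:vecdfs} becomes, after relabeling, the ``max to min according to $\prec$'' rule of Algorithm~\ref{alg:multidfs}. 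Setting $\beta_i := \alpha_{\sigma(i)}$ (so that relabeled vertex $i$ carries $\beta_i$ chips), I would argue by induction on the steps that the relabeled state of Algorithm~\ref{alg:vecdfs} agrees at every stage with the state of Algorithm~\ref{alg:multidfs} run on input $\beta$ with respect to $G_{\mathbf{x}}$ and $\prec$. The only point to check is that edges are added just in time: whenever an edge between the current head and a vertex $j$ is examined, the head has already burned and $j$ has not, so all $\omega_{G_{\mathbf{x}}}$ copies of that edge are present and none are added later, exactly matching the static multigraph.

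With the simulation in hand, the reversed-sum identity and the range constraint $\ell(e) \in \{0,\ldots,\omega_{G_{\mathbf{x}}}(e)-1\}$ are immediate from Theorem~\ref{thm:multidfs} applied to $(\beta, G_{\mathbf{x}}, \prec)$, once I note $\mathrm{rsum}(\alpha) = \mathrm{rsum}(\beta)$ (reversed sum depends only on the total of the coordinates, and $\#E(G_{\mathbf{x}})-n$ is the correct normalization by the observation made when defining $G_{\mathbf{x}}$). It remains to pin down when the algorithm terminates with a full spanning tree. By the simulation this happens iff $\beta \in \mathrm{PF}(G_{\mathbf{x}})$, but I would instead establish directly that it happens iff $\alpha \in \mathrm{PF}(\mathbf{x})$, the two conditions being different because $\mathrm{PF}(\mathbf{x}) \neq \mathrm{PF}(G_{\mathbf{x}})$ in general. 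The key observation is the promised symmetrization: when the burnt set has size $m$ (burn-order indices $0,\ldots,m-1$), every not-yet-burnt vertex $v$ has exactly $x_1 + \cdots + x_m$ edges joining it to the burnt set, a quantity independent of $v$. Hence the process can only stall at a burnt set of size $m$ when every remaining vertex satisfies $\alpha_v \geq x_1 + \cdots + x_m$; since there are then $n-m+1$ such vertices, this is precisely the failure of the defining inequality of Definition~\ref{def:xpf} at $j=m$. Thus the algorithm burns everything iff $\alpha \in \mathrm{PF}(\mathbf{x})$.

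Next I would check the two structural constraints on the output. Because each newly burnt vertex is attached to the current head, parents burn strictly before their children, so in the relabeling $v \mapsto \sigma^{-1}(v)$, which sends a vertex to its position in the burn order, we get $i \leq_T j \Rightarrow i \leq j$; and the depth-first nature of the search means the descendants of any vertex occupy a contiguous block of burn positions, which is exactly the second condition of Definition~\ref{def:planetree}. Hence $T \in \mathrm{RPT}(n+1)$. For admissibility, the ``max to min $j$'' rule together with backtracking forces the children of a common head to burn in decreasing order of original label; translating through $\sigma^{-1}$ and the definition of $\prec$, two siblings with relabeled labels $i < j$ satisfy $j \prec i$, so $\prec \in \mathrm{AVO}(T)$. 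This realizes dynamically the canonical correspondence between labeled trees and pairs $(T,\prec)$ recalled in the introduction.

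Finally, for the bijection I would show that Algorithm~\ref{alg:vecdfsinv} is a two-sided inverse of $\psi$. Given a triple $(T,\ell,\prec)$ in the target, the order $\prec$ determines $\sigma$; every edge of $T$ has positive weight in $G_{\mathbf{x}}$ since $\ell(e) \leq \omega_{G_{\mathbf{x}}}(e)-1$, so $T \in \mathrm{SPT}(G_{\mathbf{x}})$ and Theorem~\ref{thm:multidfs} lets us apply $(\varphi^{\prec})^{-1}$ to $(T,\ell)$ to recover a unique $\beta \in \mathrm{PF}(G_{\mathbf{x}})$; reindexing yields $\alpha$, which lies in $\mathrm{PF}(\mathbf{x})$ by the criterion above. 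That these maps are mutually inverse follows from the bijectivity of $\varphi^{\prec}$ for each fixed $\prec$ together with the reindexing bijection $\alpha \leftrightarrow \beta$. The step I expect to be the main obstacle is closing this loop cleanly: one must verify that running Algorithm~\ref{alg:vecdfs} on the $\alpha$ reconstructed from an \emph{admissible} triple reproduces the very same order $\prec$, rather than some other order consistent with $(T,\ell)$. This is exactly the content of admissibility — the admissible orders are precisely those realizable by the greedy max-to-min depth-first search — so the forward and inverse choices of $\sigma$ are forced to agree, after which the simulation of the first paragraph guarantees that the remaining data $(T,\ell)$ match.
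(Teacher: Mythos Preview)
Your proposal is correct and follows essentially the same route as the paper: reduce to Theorem~\ref{thm:multidfs} by showing that Algorithm~\ref{alg:vecdfs} on $\alpha$ simulates Algorithm~\ref{alg:multidfs} on the reindexed sequence $\beta$ with respect to $G_{\mathbf{x}}$ and the output order $\prec$, then read off the range of $\ell$, the reversed-sum identity, and invertibility from that theorem; your direct argument that the process burns everything iff $\alpha\in\mathrm{PF}(\mathbf{x})$ and your verification that $T\in\mathrm{RPT}(n+1)$, $\prec\in\mathrm{AVO}(T)$ mirror the paper exactly. The one place you are more careful than the paper is the point you flag as the ``main obstacle'': the paper asserts that showing Algorithm~\ref{alg:vecdfsinv} lands in $\mathrm{PF}(\mathbf{x})$ is ``equivalent'' to surjectivity of $\psi$ without explicitly arguing that $\psi$ applied to the reconstructed $\alpha$ reproduces the \emph{same} $\prec$, whereas you correctly identify that admissibility is exactly what forces the burn order in the inverse (and hence, via the simulation, in the forward) run to be $0,1,\dots,n$, pinning down $\sigma$ and closing the loop.
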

\begin{proof}
Let $\alpha \in  \mathrm{PF}(\mathbf{x})$  and let $(T,\ell,\prec)$  be the output of the vector DFS-burning algorithm on input $\alpha$. First let us show that indeed $T \in \mathrm{RPT}(n+1)$ and $\prec \in \mathrm{AVO}(T)$. It is clear that~$T$ is a tree. We need to show that in fact $T$ has $n+1$ vertices. Suppose to the contrary that~$i := \#V(T)< n+1$. Let $G, \sigma$ be as in Algorithm~\ref{alg:vecdfs} when the algorithm terminates. Set $U := V(G) - \sigma(V(T))$. All edges in $G$ between $U$ and $\sigma(V(T))$ were burnt without any vertex in $U$ starting to burn, which means that for each $j \in U$ we have~$\alpha_j \geq \mathrm{deg}^{G}_U(j) = \sum_{j=0}^{i}x_j$. Thus there are at least $n-i$ entries of $\alpha$ greater than or equal to $\sum_{j=0}^{i}x_j$. But this means that $\alpha$ is not an $\mathbf{x}$-parking function, a contradiction. So indeed $V(T) = \{0,1,\ldots,n\}$. We have~$T \in \mathrm{RPT}(n+1)$ because the algorithm adds vertices to $T$ in a depth-first search fashion; the vertices of $T$ have been relabeled in such a way that vertex~$i$ was the $(i+1)$st vertex added to $T$. We have~$\prec \in \mathrm{AVO}(T)$ because the algorithm prefers to visit vertices~$j$ with a greater label first.

Continue to fix some~$\alpha \in  \mathrm{PF}(\mathbf{x})$ with $(T,\ell,\prec) := \psi(\alpha)$. Let $\sigma \in \mathfrak{S}_n$ be the unique permutation with $\sigma(1) \prec \sigma(2) \prec \cdots \prec \sigma(n)$. Set $\beta := (\alpha_{\sigma^{-1}(1)},\alpha_{\sigma^{-1}(2)},\ldots,\alpha_{\sigma^{-1}(n)})$. The crucial observation is the following: a run of the vector DFS-burning algorithm with input $\alpha$ simulates a run of the multigraph DFS-burning algorithm (Algorithm~\ref{alg:multidfs}) with input $\beta$ with respect to multigraph~$G_{\mathbf{x}}$ and vertex order~$\prec$. Thus, because the algorithm terminates with~$V(T) = V(G_{\mathbf{x}})$, Theorem~\ref{thm:multidfs} implies that $\beta \in \mathrm{PF}(G_{\mathbf{x}})$. Moreover, we have that~$(T,\ell) = \varphi^{\prec}(\beta)$. One consequence of $(T,\ell) = \varphi^{\prec}(\beta)$ is that~$\ell\colon E(T) \to \mathbb{N}$ satisfies~$\ell(e) \in \{0,1,\ldots,\omega_{G_{\mathbf{x}}}(e)-1\}$ for all~$e \in E(T)$. We conclude that~$\psi$ is well-defined. Another consequence of $(T,\ell) = \varphi^{\prec}(\beta)$, again appealing to Theorem~\ref{thm:multidfs}, is
\[\mathrm{rsum}(\alpha) = \mathrm{rsum}(\beta)= \kappa(G_{\mathbf{x}},T,\prec) + \sum_{e \in E(T)} \ell(e).\] 
Furthermore, Theorem~\ref{thm:multidfs} implies that $\psi$ is injective with inverse given by Algorithm~\ref{alg:vecdfsinv}.

To finish the proof we need to show that $\psi$ is surjective, or equivalently that when we apply the inverse algorithm to a valid triple $(T,\ell,\prec)$ we obtain an $\mathbf{x}$-parking function. So let~$T \in \mathrm{RPT}(n+1)$, $\prec \in \mathrm{AVO}(T)$, and~$\ell\colon E(T) \to \mathbb{N}$ with $\ell(e) \in \{0,1,\ldots,\omega_{G_{\mathbf{x}}}(e)-1\}$ for all $e \in E(T)$. Let $\alpha \in \mathbb{N}^{n}$ be the output of the inverse algorithm (Algorithm~\ref{alg:vecdfsinv}) on input $(T,\ell,\prec)$. Let $\beta$ be as in Algorithm~\ref{alg:vecdfsinv}. Because~$T$ has a depth-first search labeling, and because~$\prec$ is an admissible vertex ordering, the order that vertices of~$G_{\mathbf{x}}$ start to burn as we carry out the inverse algorithm is precisely~$0,1,2,\ldots,n$. Thus for any~$1 \leq i \leq n$, at most $\mathrm{deg}^{G_{\mathbf{x}}}_{\{i,i+1,\ldots,n\}}(i) - 1= \left( \sum_{j=0}^{i}x_j\right) -1$ chips were added to $i$ as we carried out the inverse algorithm. This means that~$\beta_i \leq  \left(\sum_{j=0}^{i}x_j\right) - 1$ for all~$1 \leq i \leq n$, which implies that $\beta \in \mathrm{PF}(\mathbf{x})$. But $\alpha$ is a permutation of $\beta$ and therefore~$\alpha \in \mathrm{PF}(\mathbf{x})$ as well.
\end{proof}

As mentioned, Theorem~\ref{thm:main} follows immediately from Theorem~\ref{thm:vecdfs}. Now let us see why Corollary~\ref{cor:main} follows from Theorem~\ref{thm:main}.

\begin{proof}[Proof of Corollary~\ref{cor:main}]
It suffices to show~$\#\mathrm{AVO}(T) = n! \prod_{i=1}^{n} (\mathrm{outdeg}_{T}(i-1)!)^{-1}$ for all~$T \in \mathrm{RPT}(n+1)$. To see this, let~$T \in \mathrm{RPT}(n+1)$. A total order~$\prec$ on~$\widetilde{V}(T)$ is admissible if and only if for each $i = 0,1,\ldots,n$ the children of $i$ appear in decreasing order of label in $\prec$. Clearly $(\mathrm{outdeg}_{T}(i)!)^{-1}$ of the $n!$ total orders on~$\widetilde{V}(T)$ have the children of $i$ in the appropriate order. Moreover, these conditions are all independent since each vertex has only a single parent. Lastly, note that~$n$ cannot have a child in~$T$ so we can omit the term $(\mathrm{outdeg}_{T}(n)!)^{-1}$ from the product.
\end{proof}

\begin{remark} \label{rem:corequivalent}
Let us show how Corollary~\ref{cor:main} is equivalent to Theorem~\ref{thm:pitmanstanley} in a simple way. The point is that $T \mapsto \gamma(T) := (\mathrm{outdeg}_T(0),\mathrm{outdeg}_T(1),\ldots,\mathrm{outdeg}_T(n-1))$ defines a bijection between $\mathrm{RPT}(n+1)$ and~$\Gamma(n)$. Why is this? Let $T \in \mathrm{RPT}(n+1)$. First of all, $\gamma(T)$ is indeed in~$\Gamma(n)$ since
\begin{itemize}
\item $\sum_{i=1}^{j} \gamma(T)_i \geq j$ for~$1 \leq j \leq n$  because~$\mathrm{par}_T(i) \in \{0,1,\ldots,j-1\}$ for~$1 \leq i \leq j$;
\item  $\sum_{i=1}^{n} \gamma(T)_i \leq n$ because only~$\{1,\ldots,n\}$ are children in $T$.
\end{itemize}
Moreover, it is not hard to see that for any~$i \in V(T)$ the set of descendants of $i$ in $T$ is precisely~$\{i+1,i+2,\ldots,i+m\}$ where 
\[m := \mathrm{max}\{k \in \{0,1,\ldots,n-i\}\colon \gamma(T)_{i+1} +  \gamma(T)_{i+2} + \cdots + \gamma(T)_{i+j} \geq j \textrm{ for all } 1 \leq j \leq k\}.\]
But knowledge of the descendants of $i$ for each~$i \in V(T)$ determines the tree~$T$. So the map $T \mapsto \gamma(T)$ is injective. That $T \mapsto \gamma(T)$ is bijective then follows from the well-known facts, mentioned in Section~\ref{sec:intro}, that $\#\mathrm{RPT}(n+1) = C_n = \#\Gamma(n)$.
\end{remark}

\section{Open questions and future directions}

The following are some possible threads of future research.

\begin{enumerate}[(1)]

\item It is often worthwhile to substitute $q:=1,0,-1$ in any statistical enumerator for a combinatorial set. Specializing $q := 1$ in the reversed sum enumerator for $\mathrm{PF}(\mathbf{x})$ of course yields  $\# \mathrm{PF}(\mathbf{x})$. Specializing $q:=0$ counts the number of maximal $\mathbf{x}$-parking functions: these are just permutations of the partition
\[\lambda^{\mathbf{x}} := (x_n+x_{n-1}+\cdots+x_1-1,\ldots,x_2+x_1-1,x_1-1)\] 
and so are easily enumerated. Chebikin and Postnikov~\cite{chebikin2010generalized} (see also Pak and Postnikov~\cite{pak1994resolvents}) showed that the absolute value of the specialization $q:=-1$ is $0$ if $\lambda^{\mathbf{x}}_1$ is even and is~$\beta_{n}(S_\mathbf{x})$ if $\lambda^{\mathbf{x}}_1$ is odd. Here~$\beta_n(S)$ for~$S \subseteq \{1,2,\ldots,n-1\}$ is the number of permtutations in $\mathfrak{S}_n$ with descent set~$S$, and~$S_{\mathbf{x}} := \{ i \colon \lambda^{\mathbf{x}}_{i+1} \textrm{ is odd},1\leq i \leq n-1\}$. Via Theorem~\ref{thm:main}, the~$q:=1$ specialization yields the expression for~$\# \mathrm{PF}(\mathbf{x})$ in terms of trees stated in Corollary~\ref{cor:main}. It would be interesting to interpret both the~$q:=0$ and~$q:=-1$ specializations in terms of trees via Theorem~\ref{thm:main}. The~$q:=0$ specialization should lead to an extension of the notion of \emph{increasing tree} (see e.g.~\cite[Proposition 1.5.5]{stanley2012ec1}). Meanwhile, one could hope to find a involution on rooted plane trees with admissible vertex orders that inverts $\kappa$-number parity and recovers, by consideration of fixed points, the aforementioned formula for~$q:=-1$.

\item A \emph{weakly increasing $\mathbf{x}$-parking function} is $\alpha = (\alpha_1,\ldots,\alpha_n) \in \mathrm{PF}(\mathbf{x})$ that satisfies~$\alpha_1\leq \alpha_2 \leq \cdots \leq \alpha_n$. Denote the set of weakly increasing $\mathbf{x}$-parking functions by $\mathrm{PF}^{\nearrow}(\mathbf{x})$. It is not hard to see that weakly increasing $\mathbf{x}$-parking functions are in bijection with partitions $\mu = (\mu_1,\ldots,\mu_n)$ with $\mu \subseteq \lambda^{\mathbf{x}}$ (here $\mu \subseteq \lambda^{\mathbf{x}}$ means that~$\mu_i \leq \lambda^{\mathbf{x}}_i$ for all~$1 \leq i \leq n$). Moreover, this bijection yields the identity
\[ \sum_{\alpha \in \mathrm{PF}^{\nearrow}(\mathbf{x})} q^{\mathrm{rsum}(\alpha)} = \sum_{\mu \subseteq \lambda^{\mathbf{x}}} q^{|\lambda^{\mathbf{x}}| - |\mu|}\]
where for a partition $\nu = (\nu_1,\ldots,\nu_n)$ the \emph{size} of $\nu$ is $|\nu| := \sum_{i=1}^{n} \nu_i$. It would be interesting to give an expression for the reversed sum enumerator for~$\mathrm{PF}^{\nearrow}(\mathbf{x})$ in terms of trees and inversions. The DFS-burning algorithm does not respect weak increasingness in any obvious way. Specializing $q:=1$ of course yields $\# \mathrm{PF}^{\nearrow}(\mathbf{x})$ or equivalently the number of partitions $\mu$ contained in a given partition $\lambda^{\mathbf{x}}$. This number is given by a determinant that was essentially known to MacMahon~\cite[p.~243]{macmahon1960combinatory}; see the discussion after Theorem 12 in~\cite{stanley2002polytope}.

\item Although Theorem~\ref{thm:gpfandxpf} classifies the graphs whose parking functions are also vector parking functions, there can still be exceptional equalities between the numbers of parking functions for other families of graphs and vectors. Here is an example of such an exceptional equality. Let $K^{a}_{m,m}$ denote the complete bipartite graph with edge-weights $a$; e.g.~$V(K^{a}_{m,m}) := \{0,1,\ldots,2m-1\}$ and
\[\omega_{K^{a}_{m,m}}(\{i,j\}) := \begin{cases} a &\textrm{if $j-i \equiv 1 \mod 2$} \\ 0 &\textrm{otherwise}.\end{cases}\] 
Set~$\mathbf{y}(a,2,m) := (a,0,a,0,\ldots,a) \in \mathbb{N}^{2m-1}$. Then for all $a \geq 1$, $m \geq 2$,
\[\#\mathrm{PF}(K^{a}_{m,m}) = a^{2m-1}m^{2m-2} = \#\mathrm{PF}(\mathbf{y}(a,2,m)).\] 
The map~$\alpha \mapsto 2\alpha$ bijects between~$\mathrm{PF}(\mathbf{y}(1,2,m))$ and the subset of $\mathrm{PF}(2m-1)$ for which all entries are even, so we might call $\mathrm{PF}(\mathbf{y}(1,2,m))$ the set of ``even parking functions.'' We can slightly extend this example by considering directed graphs. There is a notion of~$G$-parking function for $G$ a directed graph (see~\cite{postnikov2004trees, chebikin2005family}). Let~$G(a,b,m)$ be the directed multigraph with~$V(G(a,b,m)) := \{0,1,\ldots,bm-1\}$ and with directed edge-weight function
\[\omega_{G(a,b,m)}(i,j) := \begin{cases}a & \textrm{if $j-i \equiv 1 \mod b$} \\ 0  &\textrm{otherwise}.\end{cases}\] 
Note that~$G(a,2,m) = K^{a}_{m,m}$ and~$G(a,1,m) = K^{a,a}_m$ are actually undirected graphs. Set
\[\mathbf{y}(a,b,m) := (a,\overbrace{0,0,\ldots,0}^{b-1},a,\overbrace{0,0,\ldots,0}^{b-1},\ldots,a,\overbrace{0,0,\ldots,0}^{b-2}) \in \mathbb{N}^{bm-1}\] 
Then for all $a,b \geq 1$ and $m \geq 2$,
\[\#\mathrm{PF}(G(a,b,m)) = a^{bm-1}m^{bm-2} = \#\mathrm{PF}(\mathbf{y}(a,b,m)).\] 
Apparently $\mathrm{PF}(G(a,b,m)) \neq \mathrm{PF}(\mathbf{y})$ for all choices of~$a \geq 1$, $b \geq 2$, and $m \geq 3$. It is likely possible to extend the classification of graphs whose sets of parking functions are invariant under~$\mathfrak{S}_n$ in Theorem~\ref{thm:gpfandxpf} to include directed graphs but such a classification is beyond the scope of the present paper. At any rate, it would be very interesting if the equality~$\#\mathrm{PF}(G(a,b,m)) = \#\mathrm{PF}(\mathbf{y}(a,b,m))$ were more than just a numerical coincidence: is there some reasonable bijection between the two sets? Even in the case $a := 1$ and $b:=2$, we are not aware of a bijection between even parking functions and spanning trees of the complete bipartite graph.

\end{enumerate}

\bibliography{parking_functions_tree_inversions}{}
\bibliographystyle{plain}

\end{document}